\numberwithin{equation}{section}
\newtheorem{theorem}[equation]{Theorem}
\newtheorem{lemma}[equation]{Lemma}
\newtheorem{proposition}[equation]{Proposition}
\theoremstyle{definition}
\newtheorem{definition}[equation]{Definition}
\newtheorem{remark}[equation]{Remark}
\newtheorem{example}[equation]{Example}
\newtheorem{corollary}[equation]{Corollary}
\newcommand{\RR}{\mathbb{R}}
\newcommand{\ZZ}{\mathbb{Z}}
\newcommand{\NN}{\mathbb{N}}
\newcommand{\TT}{\mathbb{T}}
\newcommand{\BBB}{\mathcal{B}}
\newcommand{\MMM}{\mathcal{M}}
\newcommand{\RRR}{\mathcal{R}}
\newcommand{\SSS}{\mathcal{S}}
\newcommand{\XXX}{\mathcal{X}}
\newcommand{\tom}{\mathbb{T}^\omega}
\newcommand{\ind}[1]{{\mathds{1}_{{#1}}}}
\def\namedlabel#1#2{\begingroup#2%
	\def\@currentlabel{#2}%
	\phantomsection\label{#1}\endgroup}
\newcommand{\config}[6]{
	\draw[#6,thick,fill=#6,fill opacity=0.1] 
	(#1,#3) -- 
	(#1*#5*0.5+#2-#2*#5*0.5,#3) -- 
	(#1*#5*0.5+#2-#2*#5*0.5,#3*0.5+#4*0.5) -- (#1*0.5+#2*0.5,#3*0.5+#4*0.5) -- 
	(#1*0.5+#2*0.5,#3*#5*0.5+#4-#4*#5*0.5) -- 
	(#1,#3*#5*0.5+#4-#4*#5*0.5) -- 
	(#1,#3);
	\draw[#6,draw=none,fill=#6,fill opacity=0.2] 
	(#1,#3*#5*0.5+#3*0.5+#4*0.5-#4*#5*0.5) rectangle 
	(#1*0.5+#2*0.5,#3*0.5+#4*0.5);
	\draw[#6,draw=none,fill=#6,fill opacity=0.2] 
	(#1*#5*0.5+#1*0.5+#2*0.5-#2*#5*0.5,#3) rectangle 
	(#1*0.5+#2*0.5,#3*0.5+#4*0.5);
	\draw[#6,draw=none,fill=#6,fill opacity=0.2] 
	(#1*#5*0.5+#1*0.5+#2*0.5-#2*#5*0.5,#3*#5*0.5+#3*0.5+#4*0.5-#4*#5*0.5) rectangle 
	(#1*0.5+#2*0.5,#3*0.5+#4*0.5);
	\draw[#6,draw=none] 
	(#1,#3) rectangle (#1*0.5+#2*0.5,#3*0.5+#4*0.5); 
	\draw[#6,draw=none] 
	(#1,#3*#5*0.5+#3*0.5+#4*0.5-#4*#5*0.5) rectangle (#1*0.5+#2*0.5,#3*#5*0.5+#4-#4*#5*0.5); 
	\draw[#6,draw=none] 
	(#1*#5*0.5+#1*0.5+#2*0.5-#2*#5*0.5,#3) rectangle (#1*#5*0.5+#2-#2*#5*0.5,#3*0.5+#4*0.5);
	\draw[#6,dashed] (#1*0.5+#2*0.5,#3) -- (#1*0.5+#2*0.5,#3*0.5+#4*0.5);
	\draw[#6,dashed] (#1,#3*0.5+#4*0.5) -- (#1*0.5+#2*0.5,#3*0.5+#4*0.5);
	\draw[#6,dashed] (0.5*#1+0.5*#2+0.5*#1*#5-0.5*#2*#5,#3) -- (0.5*#1+0.5*#2+0.5*#1*#5-0.5*#2*#5,#3*0.5+#4*0.5);
	\draw[#6,dashed] (#1,0.5*#3+0.5*#4+0.5*#3*#5-0.5*#4*#5) -- (#1*0.5+#2*0.5,0.5*#3+0.5*#4+0.5*#3*#5-0.5*#4*#5);
}
\newcommand{\quarters}[4]{
	\draw[thin,opacity=0.25] (#1,#3) rectangle (#2,#4);
	\draw[dashed,opacity=0.25] (#1,0.5*#3+0.5*#4) -- (#2,0.5*#3+0.5*#4);
	\draw[dashed,opacity=0.25] (0.5*#1+0.5*#2,#3) -- (0.5*#1+0.5*#2,#4);
}
\begin{document}
	
	\title[On differentiation of integrals in Lebesgue spaces]{On differentiation of integrals in Lebesgue spaces}
	
	\author{Marco Fraccaroli}
	\address{Marco Fraccaroli (\textnormal{mfraccaroli@bcamath.org})
		\newline BCAM -- Basque Center for Applied Mathematics, 48009 Bilbao, Spain}		
	\author{Dariusz Kosz}
	\address{Dariusz Kosz (\textnormal{dariusz.kosz@pwr.edu.pl})
		\newline Wroc\l aw University of Science and Technology, 50-370 Wroc\l aw, Poland}
	\author{Luz Roncal}
	\address{Luz Roncal (\textnormal{lroncal@bcamath.org})
		\newline BCAM -- Basque Center for Applied Mathematics, 48009 Bilbao, Spain \newline
		Universidad del Pa{\'i}s Vasco / Euskal Herriko Unibertsitatea, 48080 Leioa, Spain \newline
		Ikerbasque, Basque Foundation for Science, 48011 Bilbao, Spain}		
	\thanks{Marco Fraccaroli and Luz Roncal were supported by the Basque Government through the BERC 2022-2025 program and by the Ministry of Science and Innovation: BCAM Severo Ochoa accreditation CEX2021-001142-S/MICIN/AEI/\allowbreak10.13039/501100011033 and PID2023-146646NB-I00 funded by MICIU/\allowbreak AEI/10.13039/501100011033 and by ESF+. Dariusz Kosz was supported by the National Science Centre of Poland grant SONATA BIS 2022/46/E/ST1/00036. Luz Roncal was also supported by IKERBASQUE and by the Ministry of Science and Innovation through CNS2023-143893. \\
		\indent The authors gratefully acknowledge the support provided during the Simons Semester \emph{Contemporary Harmonic Analysis and its Synergies}, held in Wroc{\l}aw, B\k{e}dlewo, and Warsaw from August 1 to September 30, 2024; this event was part of the Simons Semesters at the Banach Center, funded by the Simons Foundation (Award Number: 663281).
	}
	
	\begin{abstract} We study the problem of differentiation of integrals for certain bases in the infinite-dimensional torus $\mathbb{T}^\omega$. In particular, for every $p_0 \in [1,\infty)$, we construct a basis $\mathcal{B}$ which differentiates $L^p(\mathbb{T}^\omega)$ if and only if $p \geq p_0$, thus reproving classical theorems of Hayes in $\mathbb{R}$. The main novelty is that our $\mathcal{B}$ is a Busemann--Feller basis consisting of rectangles (of arbitrarily large dimensions) with sides parallel to the coordinate axes. Our construction gives us the opportunity to classify all possible ranges of differentiation for general complete spaces.
		Namely, let $\BBB$ be a basis in a metric measure space $\XXX$. If $\XXX$ is complete, then the set $\{ p \in [1,\infty] : \BBB \text{ differentiates } L^p(\XXX) \}$ takes one of the six forms 
		\[
		\emptyset, \, \{\infty\}, \, [p_0,\infty], \, (p_0,\infty], \, [p_0,\infty), \, (p_0,\infty)
		\quad \text{for some} \quad p_0 \in [1,\infty).
		\]
		Conversely, for every $p_0 \in [1,\infty)$ and each of the six cases above, we construct a complete space $\XXX$ and a basis $\BBB$ illustrating the corresponding range of differentiation.     
		
		\smallskip	
		\noindent \textbf{2020 Mathematics Subject Classification:} Primary 43A75, 42B25.
		
		\smallskip
		\noindent \textbf{Key words:} differentiation basis, infinite-dimensional torus, maximal operator.
	\end{abstract}
	
	\date{\today}
	\maketitle
	
	
	\section{Introduction}\label{S1}
	
	\subsection{Statements of the results}\label{S1.1} The problem of differentiation of integrals concerns the question of whether, or under what conditions, the value of a given function at a certain point is the limit of mean value integrals of the function over neighborhoods approximating the point. One of the classical results in this area is the celebrated Lebesgue differentiation theorem. It asserts that, for every locally integrable function $f \colon \RR^d \to \RR$, the value $f(x)$ can be recovered for almost every $x \in \RR^d$ as the limit of the averages of $f$ over the balls $B(x,r)$ as $r \to 0$. While the same holds for centered cubes in place of balls, the case of centered rectangles with sides parallel to the coordinate axes and diameters tending to $0$ requires a slightly stronger assumption on $f$. For example, the local integrability of $|f|^p$ for some $p \in (1,\infty)$ is sufficient. These three results also have uncentered counterparts, where $x$ belongs to each given set but is not necessarily its center.
	
	In this article, we explore the problem of differentiation of integrals in the context of the infinite-dimensional torus $\tom$ (for a detailed description of this space, see Section~\ref{S4}), focusing on certain families of uncentered rectangles with sides parallel to the coordinate axes. In particular, we construct families for which the condition that $|f|^p$ is integrable is sufficient for the corresponding variant of the Lebesgue differentiation theorem when $p \geq p_0$ (resp. $p > p_0$) but not when $p < p_0$ (resp. $p \leq p_0$).
	
	Our first main result reads as follows (for the notation used, see Subsection~\ref{S1.3}).
	
	\begin{theorem} \label{T0}
		Fix $p_0 \in [1,\infty)$. Then there exist Busemann--Feller bases $\BBB_\geq$ and $\BBB_>$ in the infinite-dimensional torus $\tom$, whose elements are rectangles with sides parallel to the coordinate axes, 
		such that for every $p \in [1,\infty]$ we have the following:
		\begin{itemize}
			\item $\BBB_\geq$ differentiates $L^p(\tom)$ if and only if $p \geq p_0$,
			\item $\BBB_>$ differentiates $L^p(\tom)$ if and only if $p > p_0$.
		\end{itemize}    
	\end{theorem}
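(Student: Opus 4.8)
\medskip
\noindent\emph{Plan of proof.}
Both bases are constructed explicitly as countable unions $\BBB = \bigcup_{k \ge 1} \BBB_k$ of finite families, each $\BBB_k$ involving only finitely many coordinates, designed so that $\BBB_k$ covers $\tom$ and the maximal diameter of its members tends to $0$ as $k \to \infty$; this makes $\BBB$ a Busemann--Feller basis of axis-parallel rectangles (with cofinitely many sides equal to $\TT$), and $\MMM_\BBB f(x) = \sup_k \MMM_{\BBB_k} f(x)$, where $\MMM_{\BBB_k} f(x) = \sup \{ \tfrac{1}{|R|} \int_R |f| : R \in \BBB_k,\ x \in R \}$. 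The range of differentiation is then read off from the maximal operator: for the positive direction we use that, once $\MMM_\BBB$ is suitably bounded on $L^p(\tom)$, a Busemann--Feller basis automatically differentiates $L^p(\tom)$ --- continuous functions are differentiated trivially, because through each point there are rectangles of arbitrarily small diameter, and the weak-type bound upgrades this to all of $L^p(\tom)$; for the negative direction we exhibit, for every $p$ below the prescribed threshold, an explicit $f \in L^p(\tom)$ whose upper $\BBB$-averages exceed $f$ on a set of positive measure.

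\medskip
\noindent\emph{The local families and the critical exponent.}
Into each $\BBB_k$ we build a Perron/Besicovitch-type obstruction supported on a block $J_k$ of $d_k$ of its coordinates, with $d_k \to \infty$ and the blocks $J_k$ pairwise disjoint; the remaining coordinates carry either tiny arcs (to force small diameter) or full circles. Concretely, there are sets $E_k \subset F_k \subseteq \tom$ with $\MMM_{\BBB_k} \ind{E_k} \ge \alpha_k$ on $F_k$ and $|F_k| \ge c\, \alpha_k^{-s_k}\,|E_k|$, where the \emph{efficiency exponent} $s_k$ is governed by $d_k$ and the side-lengths chosen. Such configurations cannot exist with a fixed $s_k > 1$ in a fixed dimension, but in $\tom$ they do, precisely because a rectangle may have arbitrarily many moderately small sides: balancing the number of available ``orientations'' of the obstruction against the measures involved lets one realize any target value of the exponent. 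We therefore steer $s_k \to p_0$. For $\BBB_\ge$ we keep $s_k < p_0$ and calibrate the joint rates of $s_k \uparrow p_0$, $\alpha_k \downarrow 0$, $d_k \uparrow \infty$ so that the obstruction stays subcritical at the endpoint $p = p_0$ (while becoming supercritical for every $p < p_0$); for $\BBB_>$ we instead make the obstruction critical exactly at $p_0$ --- for instance $s_k \equiv p_0$ with a slowly growing gain, or $s_k \uparrow p_0$ with $(p_0 - s_k)\log(1/\alpha_k) \to \infty$ --- so that $p = p_0$ is destroyed while every $p > p_0$ survives.

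\medskip
\noindent\emph{The two directions.}
For the negative direction, fix $p$ below the relevant threshold, so that $s_k > p$ for all large $k$. With $|E_k| \approx \alpha_k^{s_k}$ and $|F_k| \gtrsim 1$, the rescaled functions $\widetilde f_k := \ind{E_k}/\alpha_k$ satisfy $\MMM_{\BBB_k} \widetilde f_k \ge 1$ on $F_k$ while $\|\widetilde f_k\|_{L^p(\tom)}^p \approx \alpha_k^{s_k - p} \to 0$; passing to a sufficiently sparse subsequence $(k_j)$, the function $f = \sum_j \widetilde f_{k_j}$ lies in $L^p(\tom)$, satisfies $\MMM_{\BBB}f \ge 1$ on each $F_{k_j}$ via rectangles whose diameters tend to $0$, and vanishes off a set of small measure; since in a finite measure space $|\limsup_j F_{k_j}| \ge \limsup_j |F_{k_j}|$, the upper $\BBB$-averages of $f$ exceed $f$ on a set of positive measure, so $\BBB$ fails to differentiate $L^p(\tom)$. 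For the positive direction, fix $p$ above the threshold. Since the blocks $J_k$ are disjoint and hence independent for the Haar measure, the restricted weak type $(p,p)$ constant of each $\MMM_{\BBB_k}$ is of size $\approx \alpha_k^{p - s_k} \to 0$, summable by the calibration; consequently $|\{\sup_k \MMM_{\BBB_k}\ind{E} > \alpha\}| \le \sum_k |\{\MMM_{\BBB_k}\ind{E} > \alpha\}| \lesssim \alpha^{-p}|E|$, i.e.\ $\MMM_\BBB$ is of restricted weak type $(p,p)$; interpolating across the range $(p_0,\infty)$ (and handling the endpoint $p = p_0$ for $\BBB_\ge$ from the same summability) gives the required bound on $L^p(\tom)$, whence differentiation.

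\medskip
\noindent\emph{Main obstacle.}
The crux is the simultaneous calibration of the parameters. The exponents $s_k$ must approach $p_0$ fast enough that differentiation of $L^p(\tom)$ genuinely fails for \emph{all} $p$ below the threshold; yet the per-block constants must remain summable for every $p$ above it; and --- the delicate point separating $\BBB_\ge$ from $\BBB_>$ --- the endpoint $p = p_0$ must be forced onto the prescribed side, which requires controlling the obstruction at a borderline scale. All of this must be done while each $\BBB_k$ remains a covering family of honest axis-parallel rectangles of vanishing diameter, and while ensuring that the high-dimensional geometry used to inflate the exponent does not create additional, uncontrolled obstructions across the blocks. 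Carrying out this bookkeeping over infinitely many blocks and scales is where the real work lies; the geometric core is the construction of the Perron/Besicovitch configurations with prescribed efficiency exponent.
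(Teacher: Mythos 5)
Your overall architecture matches the paper's: explicit configurations of axis-parallel rectangles exploiting arbitrarily many coordinates, a weak-type maximal bound for the positive direction, and an explicit $L^p$ function whose upper averages exceed it on a set of positive measure for the negative direction. Your negative direction is essentially sound and close to the paper's (the paper takes $f=\sup_j \varepsilon_j^{-1}\ind{F_j}$ and uses the second Borel--Cantelli lemma with independent sets; your $f=\sum_j \ind{E_{k_j}}/\alpha_{k_j}$ with reverse Fatou is a workable variant provided $|F_k|\gtrsim 1$ uniformly and the support of $f$ is kept small).

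The genuine gap is in the positive direction. You claim that the restricted weak-type $(p,p)$ constant of each block operator $\MMM_{\BBB_k}$ is of size $\approx\alpha_k^{p-s_k}\to 0$ and that summing the level-wise distributional bounds gives $\big|\{\sup_k \MMM_{\BBB_k}\ind{E}>\alpha\}\big|\le\sum_k\big|\{\MMM_{\BBB_k}\ind{E}>\alpha\}\big|\lesssim\alpha^{-p}|E|$. This cannot work: since each $\BBB_k$ covers $\tom$, testing on $E=R$ for $R\in\BBB_k$ shows that every $\MMM_{\BBB_k}$ has (restricted) weak-type constant at least $1$, so the constants cannot tend to $0$ and the sum of the level-wise bounds over infinitely many $k$ diverges. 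What can be made small is only the \emph{excess} over the trivial constant, and the union bound does not see that. This is precisely the difficulty the paper's Proposition~\ref{P2} is designed to overcome: the configurations are arranged so that any two rectangles from different levels are either nested or disjoint (condition~\ref{A3}), and a maximal-element selection then shows that the full maximal operator inherits the \emph{same} weak-type constant as the individual levels, with no summation over $k$ at all. Your design with pairwise disjoint coordinate blocks does not supply such a mechanism, and independence of the blocks alone does not rescue the union bound. A secondary issue: restricted weak type plus interpolation covers $p>p_0$, but at the endpoint $p=p_0$ (needed for $\BBB_\geq$) the density argument that converts a maximal inequality into differentiation is applied to $f-f_j$ with $f_j$ continuous, which is not a characteristic function, so genuine weak type $(p_0,p_0)$ is required there; the paper obtains it from the sharp single-configuration estimate (Theorem~\ref{T2}) combined with Proposition~\ref{P2}, whereas your plan leaves this endpoint unaddressed beyond an appeal to ``the same summability.''
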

	
	A couple of comments regarding Theorem~\ref{T0} are in order.
	\begin{enumerate}
		\item Hayes \cite{Ha52, Ha52a, Ha58} provided examples of differentiation bases in the one-dimensional torus $\TT$ with similar properties. For example, he constructed a basis that differentiates $L^p(\TT)$ if and only if $p \geq p_0$. The main novelty of Theorem~\ref{T0} is that our examples are Busemann--Feller bases consisting of rectangles with sides parallel to the coordinate axes---something impossible in any finite-dimensional setting. For a more detailed discussion, see Subsection~\ref{S1.2}.   
		\item In the proof of Theorem~\ref{T0}, we explicitly construct families of rectangles so that the associated bases satisfy the given properties. Our examples are largely inspired by the objects studied in the series of articles \cite{FR20, Ko21, KMPRR23, KRR23}. Moreover, they can be used to find bases in the spirit of Hayes, see Corollary~\ref{Cor1}.  
	\end{enumerate}
	
	\begin{corollary} \label{Cor1}
		Fix $p_0 \in [1,\infty)$. Then there exist Busemann--Feller bases $\BBB_\geq^*$ and $\BBB_>^*$ in the unit interval $[0,1]$, whose elements are finite unions of open intervals, such that  
		$\BBB_\geq^*$ (resp.~$\BBB_>^*$) differentiates $L^p([0,1])$ if and only if $p \geq p_0$ (resp.~$p > p_0$).  
	\end{corollary}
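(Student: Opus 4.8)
The plan is to transfer the bases $\BBB_\geq$ and $\BBB_>$ from $\tom$ to $[0,1]$ by means of an explicit measure isomorphism built from binary expansions. The only feature of the bases from Theorem~\ref{T0} that we need is that the rectangles comprising them may be taken \emph{dyadic}: each is a product $R=\prod_{i\ge 1}I_i$ in which $I_i$ is either all of $\TT$ or a dyadic arc $[k2^{-m},(k+1)2^{-m})\bmod 1$, with $I_i=\TT$ for all but finitely many $i$. If the construction behind Theorem~\ref{T0} does not literally produce dyadic rectangles, it can be adjusted so that it does (using intervals of dyadic length and dyadic position throughout); this is the single place where one has to revisit that construction, and it is the main obstacle.

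Write each $x=(x_i)_{i\ge 1}\in\tom$ through its non-terminating binary expansions $x_i=\sum_{j\ge 1}b_{i,j}(x)2^{-j}$, which is well defined off a null set, fix the Cantor pairing bijection $\sigma\colon\NN\to\NN\times\NN$, and set
\[
\Phi\colon\tom\to[0,1],\qquad \Phi(x)=\sum_{k\ge 1}b_{\sigma(k)}(x)\,2^{-k}.
\]
Then $\Phi$ is a bijection between two co-null sets which pushes the Haar measure of $\tom$ forward to the Lebesgue measure on $[0,1]$; in particular $\|g\circ\Phi\|_{L^p(\tom)}=\|g\|_{L^p([0,1])}$ for every $g$ and every $p$. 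If $R$ is a dyadic rectangle, then $R$ is obtained by fixing finitely many of the digits $b_{i,j}$, so $\Phi(R)$ is a finite union of dyadic subintervals of $[0,1]$; passing to its interior (which changes $\Phi(R)$ only by a null set) we obtain a finite union of open intervals. Define $\BBB_\geq^{*}:=\{\operatorname{int}\Phi(R):R\in\BBB_\geq\}$ and $\BBB_>^{*}$ analogously. Since $\Phi$ is a measure isomorphism, these are Busemann--Feller bases of the required form, up to the routine bookkeeping needed around the null set on which $\Phi$ fails to be a bijection and around the endpoints of the dyadic intervals involved.

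The heart of the argument is that $\Phi$ respects the shrinking condition: for every sequence of dyadic rectangles $(R_k)$,
\[
\operatorname{diam}_{\tom}(R_k)\to 0\iff \operatorname{diam}_{[0,1]}\big(\Phi(R_k)\big)\to 0 .
\]
For a dyadic rectangle $R$ let $c_i(R)$ be the number of binary digits of the $i$-th coordinate that $I_i$ fixes (so $c_i(R)=0$ when $I_i=\TT$) and put $\delta(R):=\inf_i\big(i+c_i(R)\big)$. On the one hand, $\operatorname{diam}_{\tom}(R)\to 0$ if and only if $\delta(R)\to\infty$ --- immediate for the $\sup$-type product metric on $\tom$, and a one-line estimate for the $\ell^1$-type one. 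On the other hand, let $L^{*}(R)$ be the largest $L$ such that the first $L$ binary digits of $\Phi$ are constant on $R$; comparing the first $L$ cells visited by $\sigma$ with the staircase $\{(i,j):j\le c_i(R)\}$ gives $L^{*}(R)\ge\binom{\delta(R)}{2}$, whence $\Phi(R)$ lies in a dyadic interval of length $2^{-L^{*}(R)}$, while if $\delta(R)\le\delta_0$ then some low-index coordinate of $R$ is constrained by fewer than $\delta_0$ digits, forcing $\Phi(R)$ to contain two points at distance at least $c(\delta_0)>0$. Combining these two facts yields the displayed equivalence.

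Granting it, the corollary follows by transference. If $p\ge p_0$ and $f\in L^p([0,1])$, apply the first bullet of Theorem~\ref{T0} to $g:=f\circ\Phi\in L^p(\tom)$ and push the conclusion forward through $\Phi$: measure preservation identifies averages of $f$ over the sets of $\BBB_\geq^{*}$ with averages of $g$ over the corresponding rectangles of $\BBB_\geq$, and the displayed equivalence matches the diameter conditions, so $\BBB_\geq^{*}$ differentiates $f$; running the same transference backwards shows that $\BBB_\geq^{*}$ differentiating $L^p([0,1])$ would force $\BBB_\geq$ to differentiate $L^p(\tom)$, which fails for $p<p_0$. The argument for $\BBB_>^{*}$ is identical, using the second bullet. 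The only nontrivial point is the dyadicity of the rectangles from Theorem~\ref{T0}; everything else is elementary.
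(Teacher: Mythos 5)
Your route---a fixed global measure isomorphism $\Phi\colon\tom\to[0,1]$ obtained by interleaving binary digits---is genuinely different from the paper's, but as written it has a gap, and it sits exactly where you place your one hedge. You assume the rectangles of $\BBB_\geq$ and $\BBB_>$ ``may be taken dyadic'', i.e.\ each side is a dyadic arc $[k2^{-m},(k+1)2^{-m})$, and you propose to ``adjust the construction'' to achieve this. That adjustment is impossible: an $(\varepsilon,d)$-configuration consists of $Q_0\in\RRR_0$ together with translates $Q_i$ satisfying $|Q_i\cap Q_0|=\varepsilon|Q_0|$ with $0<\varepsilon\le\tfrac12$, and two dyadic arcs of the same length are either equal or disjoint, so the $i$-th side of $Q_i$ is necessarily misaligned with the dyadic grid at its own scale, no matter how $\varepsilon$ is chosen (dyadic or not). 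The partial overlap is the whole point of the configurations---it is what drives Theorem~\ref{T2}---so it cannot be designed away. Consequently your key lemma is not proved for the sets that actually occur: the quantity $c_i(R)$ (``the number of binary digits that $I_i$ fixes'') is undefined for the straddling sides of the $Q_i$, and the estimate $L^*(R)\ge\binom{\delta(R)}{2}$, which compares the cells visited by $\sigma$ with the staircase $\{(i,j):j\le c_i(R)\}$, only makes sense when every constrained side is a single grid-aligned dyadic arc. Without the diameter equivalence for the $Q_i$ the negative half of the corollary collapses: in the proof of Theorem~\ref{T0} the sets witnessing $\overline f(x)\ge 1$ are, for most $x\in F_j^*$, the translated rectangles $Q_i$, and if their images under $\Phi$ do not contract to $\Phi(x)$ they contribute nothing to $\overline{f^*}(\Phi(x))$.

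The gap is probably repairable, but not by the fix you propose. What would save the argument is a weaker structural property that you neither state nor verify: in Lemma~\ref{L1} each configuration $\{Q_0,\dots,Q_d\}$ is contained in a single grid-aligned dyadic rectangle one scale coarser (each translate $Q_i$ stays inside the dyadic parent of $Q_0$), so the leading $c_i-1$ binary digits of every constrained coordinate are still constant on every $Q_i$, and your upper bound on $\operatorname{diam}\Phi(Q_i)$ survives with $\delta$ reduced by one. Making this precise requires redoing your lemma with $c_i(R)$ redefined as the number of leading constant digits and checking, through the induction of Proposition~\ref{P3}, that every rectangle at every level retains this containment. The paper sidesteps the entire issue: it uses no fixed isomorphism, but lays out intervals in $[0,1]$ hierarchically, assigning to each atom of $\Sigma_j$ a subinterval of its parent atom's interval with matching length and letting $S^*$ be the union of the intervals of the atoms of $S$; then $S^*$ lies in a single interval $E_{j,n}^*$ of length $|E_{j,n}|\to 0$, so the contraction relation transfers by construction and no digit combinatorics are needed. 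If you carry out the repair above you obtain a second, self-contained proof; as submitted, the argument is incomplete at the step you yourself single out as the main obstacle, and the repair you suggest for that step cannot work.
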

	
	With Theorem~\ref{T0} in hand, we turn our attention to a related problem concerning the Lebesgue spaces $L^p(\XXX)$ for arbitrary metric measure spaces $\XXX = (X, \rho, \mu)$, and arbitrary differentiation bases $\BBB$. Namely, given a pair $(\XXX,\BBB)$ and $p_1,p_2 \in [1,\infty]$ such that $p_1 < p_2$, we ask whether there is a relationship between the variants of the Lebesgue differentiation theorem formulated for $L^{p_1}(\XXX)$ and $L^{p_2}(\XXX)$. Of course, the answer is clear when the space has finite measure (or when the problem reduces to such a case), since then $L^{p_2}(\XXX) \subseteq L^{p_1}(\XXX)$. Here, however, we only assume that $\XXX$ is complete, and interestingly, in this more general setting, the answer depends on whether $p_2$ is finite or not.
	
	Our second main result reads as follows.
	
	\begin{theorem}\label{T1}
		Let $\BBB$ be a differentiation basis in $\XXX = (X, \rho, \mu)$. Define
		\[
		{\rm diff}(\BBB) \coloneqq
		\big\{ p \in [1,\infty] : \BBB \text{ differentiates } L^p(\XXX) \big\}.
		\]
		If $\XXX$ is complete, then, for some $p_0 \in [1,\infty)$, one of the following six cases occurs:
		\begin{enumerate}
			\item[\namedlabel{C1}{\rm(C1)}] 
			${\rm diff}(\BBB) = \emptyset$,
			
			\item[\namedlabel{C2}{\rm(C2)}] 
			${\rm diff}(\BBB) = \{\infty\}$,
			
			\item[\namedlabel{C3}{\rm(C3)}] 
			${\rm diff}(\BBB) = [p_0, \infty]$,
			
			\item[\namedlabel{C4}{\rm(C4)}]
			${\rm diff}(\BBB) = (p_0, \infty]$,
			
			\item[\namedlabel{C5}{\rm(C5)}] ${\rm diff}(\BBB) = [p_0, \infty)$,
			
			\item[\namedlabel{C6}{\rm(C6)}] ${\rm diff}(\BBB) = (p_0, \infty)$.
		\end{enumerate}
		Moreover, if $X$ is a countable union of (possibly overlapping) open sets with finite measure, then (regardless of whether $\XXX$ is complete) one of the four cases \ref{C1}--\ref{C4} occurs. 
		
		Conversely, fix $p_0 \in [1,\infty)$ and $P \subseteq [1,\infty]$. If $P$ takes one of the forms \ref{C1}--\ref{C6}, then one can find a pair $(\XXX, \BBB)$ such that ${\rm diff}(\BBB) = P$. Moreover, if $P$ takes one of the forms \ref{C1}--\ref{C4}, then one can find a pair $(\XXX, \BBB)$ such that ${\rm diff}(\BBB) = P$ and $\mu(X) < \infty$.
	\end{theorem}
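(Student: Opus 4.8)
The plan is to separate Theorem~\ref{T1} into its several independent assertions and handle each in turn. The heart of the matter is the first part: showing that ${\rm diff}(\BBB)$ must be one of the six listed types. For this I would establish two monotonicity-type lemmas. First, a \emph{finite-to-infinite} principle: if $\BBB$ differentiates $L^p(\XXX)$ for some $p < \infty$, then it differentiates $L^q(\XXX)$ for every $q \in [p,\infty)$; this should follow by a localization argument, cutting an arbitrary $f \in L^q$ against the sets of an exhausting sequence and using that on each finite-measure piece $L^q \hookrightarrow L^p$, together with the fact that differentiation is a local (almost-everywhere) property. Second, an \emph{endpoint} principle at $\infty$: if $\BBB$ differentiates $L^p(\XXX)$ for some $p < \infty$, it need \emph{not} differentiate $L^\infty(\XXX)$ — but conversely, whether $\infty \in {\rm diff}(\BBB)$ is a free binary choice once completeness is assumed. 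The role of completeness is precisely to force the lower endpoint structure: without it one could, in principle, obtain a half-open interval on the \emph{other} side, so I expect the key use of completeness to be in showing that the set of $p$ for which differentiation \emph{fails} is a (possibly empty) initial segment of $[1,\infty)$ whose supremum behaves well. Concretely, I would argue that ${\rm diff}(\BBB) \cap [1,\infty)$ is either empty or an interval of the form $[p_0,\infty)$ or $(p_0,\infty)$ with $p_0 \in [1,\infty)$, and then the four combinations with ``$\infty$ in or out'' produce exactly (C1)--(C6). The $\sigma$-finite-by-open-sets addendum follows because then the finite-measure reduction globalizes, killing the ``$\infty$-only'' pathologies and forcing (C1)--(C4).

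For the converse direction I would build the examples by leveraging Theorem~\ref{T0} and Corollary~\ref{Cor1} as black boxes. For the finite-measure cases (C1)--(C4): case (C3) is exactly Corollary~\ref{Cor1} with $\BBB_\geq^*$ on $[0,1]$, and (C4) is $\BBB_>^*$ on $[0,1]$; case (C2), ${\rm diff}(\BBB)=\{\infty\}$, is obtained by a basis on $[0,1]$ so badly behaved that it differentiates no $L^p$ with $p<\infty$ yet trivially differentiates $L^\infty$ (for instance a basis whose maximal operator is unbounded on every $L^p$, $p<\infty$, but such that constants — and hence, after the standard reduction, bounded functions — are handled), while (C1) needs a basis that fails even on $L^\infty$, e.g.\ one contracting to a point of positive-measure concentration in a way that no bounded function is differentiated. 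For the remaining two cases (C5) and (C6), which are genuinely infinite-measure phenomena, the idea is to take a disjoint countable union $\XXX = \bigsqcup_{n} \XXX_n$ of copies of finite-measure pieces, scaled so that $\mu(X) = \infty$, and fit together on each $\XXX_n$ a basis from the (C3)/(C4) construction in such a way that $L^\infty$ differentiation fails globally (because a bounded function that is locally a suitable nonconstant profile on each piece cannot be recovered a.e., the failure being detected infinitely often), while $L^p$ differentiation for $p \ge p_0$ (resp.\ $p > p_0$) persists because an $L^p$ function must have small tails. This is the step where completeness of $\XXX$ must be verified by hand: a disjoint union of complete spaces is complete provided the pieces are uniformly separated, so I would place the $\XXX_n$ at mutual distance bounded below.

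The main obstacle I anticipate is the converse construction for cases (C5) and (C6): one must simultaneously arrange \emph{failure} of $L^\infty$-differentiation and \emph{success} of $L^{p_0}$- (resp.\ $L^{p_0^+}$-)differentiation on the same basis, and these pull in opposite directions — the $L^\infty$ failure wants the basis to be ``bad at infinitely many scales uniformly,'' while the $L^p$ success wants quantitative control that such uniform badness tends to destroy. The resolution should come from the fact that an $L^p$ function with $p < \infty$ on an infinite-measure space has vanishing mass at infinity, so the obstruction to differentiation can be made to live only in the ``far'' pieces $\XXX_n$ with $n$ large, where an $L^p$ function is already negligible, whereas a bounded function can carry the obstruction uniformly across all $n$. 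Making the per-piece bases compatible — same $p_0$, controlled maximal function on the good range, genuinely obstructing on $L^\infty$ — is the delicate bookkeeping; everything else (the structural forward implications, the $\sigma$-finite addendum, and cases (C1)--(C4)) I expect to be routine given Theorem~\ref{T0}, Corollary~\ref{Cor1}, and standard localization arguments for differentiation bases.
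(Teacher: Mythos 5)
Your construction for cases \ref{C5}--\ref{C6} cannot work, and the obstruction is the very addendum you accept in the forward direction. If $X=\bigsqcup_n \XXX_n$ with the pieces open, of finite measure, and at mutual distance bounded below, then $X$ is a countable union of open sets of finite measure, so Proposition~\ref{P1}(b) forces ${\rm diff}(\BBB)$ to be of type \ref{C1}--\ref{C4}; the half-open-at-infinity ranges are unreachable on such a space. Concretely, separation makes differentiation localize: every sequence contracting to $x\in\XXX_n$ eventually lies in $\XXX_n$, so $\infty\in{\rm diff}(\BBB)$ if and only if each per-piece basis differentiates $L^\infty(\XXX_n)$ (your ``failure detected infinitely often'' only produces a countable union of null exceptional sets, which is still null), while if some piece fails on $L^\infty(\XXX_n)$ then, since $\mu(\XXX_n)<\infty$ gives $L^\infty(\XXX_n)\subseteq L^p(\XXX_n)$, that same piece destroys $L^p$-differentiation for every finite $p$. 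There is no way to win with $\sigma$-finite-by-open-sets pieces. The paper escapes this by gluing the \ref{C3}/\ref{C4} torus example to the genuinely pathological space of Example~\ref{E1}, whose component $I$ carries a measure assigning only the values $0$ and $\infty$: the $L^\infty$ counterexample $\ind{K}$ has exceptional set $I$, yet every $f\in L^p$ with $p<\infty$ vanishes $\mu$-a.e.\ on $I$ and so cannot detect the obstruction (see also Remark~\ref{rem2}, which shows this zero--infinity dichotomy is forced). Identifying some such non-$\sigma$-finite mechanism is the missing idea, not ``delicate bookkeeping'' of per-piece bases.

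Two smaller gaps. First, your proof sketch of the monotonicity lemma in the complete case is really the proof of Proposition~\ref{P1}(b): a general complete space admits no exhausting sequence of finite-measure sets (Example~\ref{E1} again), and truncating against non-open sets does not localize differentiation. The paper's case (a) instead truncates by level sets, $f_n=f\cdot\ind{\{|f|\geq 2^{-n}\}}$, which lies in $L^{p_1}$ by the pointwise bound $|f|^{p_1}\leq 2^{n(p_2-p_1)}|f|^{p_2}$ on that set and satisfies $\|f-f_n\|_{L^\infty}\leq 2^{-n}$, whence $E(f)\subseteq\bigcup_n E(f_n)$; completeness enters only to conclude that the possibly non-measurable $E(f)$ is null. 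Second, your case \ref{C2} is a desideratum rather than a construction, and the proposed mechanism is unsound: unboundedness of $\MMM_\BBB$ on every $L^p$, $p<\infty$, does not by itself imply failure of differentiation (the implication from differentiation to weak-type bounds is precisely what is open at $p=1$). The paper's construction is a diagonal patching: split $\tom$ into disjoint rectangles $U_k$ and place on $U_k$ a \ref{C3} basis with $p_0=k+1$, so that ${\rm diff}(\BBB)=\bigcap_k[k+1,\infty]=\{\infty\}$. The remaining items (cases \ref{C3}--\ref{C4} from Theorem~\ref{T0}, case \ref{C1} from a known empty-range basis) are handled as you indicate.
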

	
	Several comments regarding Theorem~\ref{T1} are in order.
	\begin{enumerate} 
		\item The first part of Theorem~\ref{T1} follows from a monotonicity property stating that the larger $p$ is, the better the differentiation properties of $\BBB$ are (see Proposition~\ref{P1}). We verify this property for $p \in [1,\infty)$ assuming that $\XXX$ is complete, or for all $p \in [1,\infty]$ assuming that $X$ is a countable sum of open sets with finite measure. 
		\item Regarding the case $p \in [1,\infty)$ above, we assume that $\XXX$ is complete in order to better handle pathological measurability issues (see Example~\ref{E2} and Example~\ref{E3}). This is a very mild restriction, since there is a canonical way to extend any given measure space to a complete one. However, we do not rule out the possibility that this assumption can be completely removed. 
		On the other hand, we know that, in general, the range $[1,\infty)$ cannot be extended to $[1,\infty]$ (see Example~\ref{E1}). 
		\item The second part of Theorem~\ref{T1} follows by combining Example~\ref{E1} with the family of examples constructed in the proof of Theorem~\ref{T0}.
		\item We also study the relationship between the range ${\rm diff}(\BBB)$ for a given uncentered basis $\BBB$ and the behavior of the associated maximal operator (see Remark~\ref{rem3}).  
	\end{enumerate}
	
	\subsection{Historical background and related results}\label{S1.2}
	
	In \cite{Ha52, Ha52a, Ha58}, Hayes provided examples of differentiation bases $\BBB^*$ in the one-dimensional torus $\mathbb{T}$ exhibiting the cases \ref{C2}--\ref{C3}, see also \cite[Chapters~VI.1 and VII.5]{dG75}. A minor adjustment of Hayes' construction is likely to yield the case \ref{C4} as well, introducing logarithmic factors as in our example for \ref{C4}. 
	
	It is worth noting that the elements of Hayes' differentiation bases $\BBB^*$ are not intervals, but disjoint unions of intervals. In fact, in the $d$-dimensional torus $\mathbb{T}^d$ for any fixed $d \in \NN$, no Busemann--Feller basis $\BBB$ made of arbitrary $d$-dimensional rectangles with sides parallel to the axes can exhibit the cases \ref{C1}--\ref{C4} for any $p_0 \in (1,\infty)$. This is due to the $L^p(\RR^d)$ bounds for the strong maximal operator for every $p \in (1,\infty]$, see \cite{JMZ35, Fa72, CF75}. As a consequence, for each such $\BBB$ we would have ${\rm diff}(\BBB) \in \{[1, \infty], (1, \infty] \}$.
	
	The main novelty of the article is that we circumvent this finite-dimensional obstruction by considering differentiation bases of rectangles in $\mathbb{T}^{\omega}$, recovering all the cases \ref{C1}--\ref{C4}. 
	
	Let us mention that in \cite{HS09}, Hagelstein and Stokolos proved that, for any fixed $d \in \NN$ and for every homothecy invariant density basis $\BBB$, whose elements are convex sets in $\RR^d$, one of the cases \ref{C3}--\ref{C4} occurs for some $p_0 \in [1,\infty)$. They also showed in \cite{HS24} that, if $d=2$, then a previous result of Bateman \cite{Ba09} implies that necessarily $p_0 = 1$; not long after, however, together with Radillo-Murgu{\'i}a \cite{HRS24} (see also \cite{HRS24a}), they identified a gap in Bateman’s proof, providing a counterexample to \cite[Claim~7(B)]{Ba09}. This makes the result from \cite{HS24} conditional upon the correctness of the main theorem in \cite{Ba09}. In this regard, we point out that the bases we exhibit in the proof of Theorem~\ref{T0} are far from being homothecy or even translation invariant, with the sole exception of the basis used to cover the case \ref{C1}. Therefore, both Theorem~\ref{T0} and Theorem~\ref{T1} help clarify the possible ranges of differentiation in the context of arbitrary bases, but they do not contribute to the corresponding problem formulated for homothecy invariant bases.
	
	We conclude this section recalling the characterization of the weak $L^p(\XXX)$ boundedness of a maximal function associated with a collection of subsets $\BBB$ in terms of the covering properties of $\BBB$ with $p_*$ such that $\frac{1}{p} + \frac{1}{p_*} = 1$ (for the notation used, see Section~\ref{S3}).
	\begin{theorem}[\cite{CF75}]\label{char_max}
		Fix $p \in (1,\infty)$. Let $\MMM_\BBB$ be the maximal operator associated with a collection $\BBB$ in a $\sigma$-finite complete metric measure space $\XXX$. The following are equivalent:
		\begin{enumerate}
			\item[{\rm(1)}]
			$\MMM_\BBB$ satisfies the weak-type $(p,p)$ inequality,
			
			\item[{\rm(2)}] 
			$\BBB$ has the \emph{covering property $V_{p_*}$}, i.e., there is a constant $C \in (0,\infty)$ such that, for every collection $\mathcal{E} \subseteq \BBB$, there is a subcollection $\mathcal{F} \subseteq \mathcal{E}$ such that 
			\begin{equation*}
				\mu \Big( \bigcup_{E \in \mathcal{E}} E \Big) \leq C \mu \Big( \bigcup_{F \in \mathcal{F}} F \Big), \qquad \Big\| \sum_{F \in \mathcal{F}} \ind{F} \Big\|_{L^{p_*}(\XXX)}  \leq C \mu \Big( \bigcup_{E \in \mathcal{E}} E \Big)^{\frac{1}{p_*}}.
			\end{equation*}
		\end{enumerate}
	\end{theorem}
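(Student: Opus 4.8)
The plan is to prove the two implications separately; the substantial one is $(1)\Rightarrow(2)$. For the (softer) direction $(2)\Rightarrow(1)$ I would argue by duality. Fix $f\in L^p(\XXX)$ and $\lambda>0$, and set $\mathcal{E}\coloneqq\{B\in\BBB:\mu(B)^{-1}\int_B|f|\,d\mu>\lambda\}$, so that (modulo the standard conventions in the definition of $\MMM_\BBB$ and the attendant measurability issues) $\{\MMM_\BBB f>\lambda\}=\bigcup_{E\in\mathcal{E}}E$. Applying the covering property $V_{p_*}$ to $\mathcal{E}$ yields $\mathcal{F}\subseteq\mathcal{E}$ with $\mu(\bigcup_{E\in\mathcal{E}}E)\le C\,\mu(\bigcup_{F\in\mathcal{F}}F)$ and $\|\sum_{F\in\mathcal{F}}\ind{F}\|_{L^{p_*}}\le C\,\mu(\bigcup_{E\in\mathcal{E}}E)^{1/p_*}$. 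Since $\lambda\,\mu(F)<\int_F|f|\,d\mu$ for each $F\in\mathcal{F}$, subadditivity of $\mu$ and Hölder's inequality with exponents $p$ and $p_*$ give
\[
\lambda\,\mu\Big(\bigcup_{F\in\mathcal{F}}F\Big)\le\sum_{F\in\mathcal{F}}\int_F|f|\,d\mu=\int\Big(\sum_{F\in\mathcal{F}}\ind{F}\Big)|f|\,d\mu\le\Big\|\sum_{F\in\mathcal{F}}\ind{F}\Big\|_{L^{p_*}}\|f\|_{L^p}\le C\,\mu\Big(\bigcup_{E\in\mathcal{E}}E\Big)^{1/p_*}\|f\|_{L^p}.
\]
Writing $A\coloneqq\mu(\{\MMM_\BBB f>\lambda\})$ and combining this with $A\le C\,\mu(\bigcup_{F\in\mathcal{F}}F)$ gives $\lambda A/C\le C\,A^{1/p_*}\|f\|_{L^p}$, hence $A^{1/p}=A^{1-1/p_*}\le C^2\|f\|_{L^p}/\lambda$, which is the weak-type $(p,p)$ inequality. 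I would run this argument first for finite subcollections $\mathcal{E}'\subseteq\mathcal{E}$ (which also keeps $\mathcal{F}$ finite) and then pass to the supremum using $\sigma$-finiteness, thereby also covering the case $A=\infty$.

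For the direction $(1)\Rightarrow(2)$, the first step is a standard reduction using $\sigma$-finiteness, after which it suffices to treat a finite collection $\mathcal{E}$ and produce $\mathcal{F}$ with a constant independent of $\mathcal{E}$. I would then run the following greedy selection: let $F_1\in\mathcal{E}$ be of maximal measure, and, having chosen $F_1,\dots,F_{k-1}$ with union $U_{k-1}$, let $F_k$ be a set of maximal measure among all $E\in\mathcal{E}$ satisfying $\mu(E\cap U_{k-1})\le\tfrac12\mu(E)$, stopping when no such set remains. This produces $\mathcal{F}=\{F_1,\dots,F_m\}$ whose ``new parts'' $\widetilde F_k\coloneqq F_k\setminus U_{k-1}$ are pairwise disjoint and satisfy $\mu(\widetilde F_k)\ge\tfrac12\mu(F_k)$.

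It then remains to verify the two requirements of $V_{p_*}$ using the assumed weak-type $(p,p)$ inequality, with constant $C$ say. For the mass bound, note that at the moment the selection stops every $E\in\mathcal{E}$ has $\mu(E\cap\bigcup_{F\in\mathcal{F}}F)>\tfrac12\mu(E)$, so $\MMM_\BBB\ind{\bigcup_{F\in\mathcal{F}}F}>\tfrac12$ on all of $\bigcup_{E\in\mathcal{E}}E$; applying the weak-type inequality to the function $\ind{\bigcup_{F\in\mathcal{F}}F}$ gives $\mu(\bigcup_{E\in\mathcal{E}}E)\le(2C)^p\,\mu(\bigcup_{F\in\mathcal{F}}F)$. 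For the overlap bound, by duality it is enough to control $\sum_k\int_{F_k}h\,d\mu$ for an arbitrary $h\ge0$ with $\|h\|_{L^p}\le1$; since $\MMM_\BBB h\ge\mu(F_k)^{-1}\int_{F_k}h\,d\mu$ on $F_k\supseteq\widetilde F_k$ and $\mu(F_k)\le2\mu(\widetilde F_k)$, one gets $\int_{F_k}h\,d\mu\le2\int_{\widetilde F_k}\MMM_\BBB h\,d\mu$, and summing over the disjoint sets $\widetilde F_k$ and using Hölder's inequality for the Lorentz pair $L^{p,\infty}$--$L^{p_*,1}$, together with $\|\MMM_\BBB h\|_{L^{p,\infty}}\le C\|h\|_{L^p}\le C$ and $\|\ind{\bigcup_{F\in\mathcal{F}}F}\|_{L^{p_*,1}}\lesssim_p\mu(\bigcup_{F\in\mathcal{F}}F)^{1/p_*}$, yields $\sum_k\int_{F_k}h\,d\mu\le2\int_{\bigcup_{F\in\mathcal{F}}F}\MMM_\BBB h\,d\mu\lesssim_p\mu(\bigcup_{E\in\mathcal{E}}E)^{1/p_*}$, which is the desired bound on $\|\sum_{F\in\mathcal{F}}\ind{F}\|_{L^{p_*}}$.

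I expect the main obstacle to be the direction $(1)\Rightarrow(2)$, and within it the design of the selection: one needs the new parts $\widetilde F_k$ to carry a fixed proportion of the mass of $F_k$ (so that passing from $F_k$ to $\widetilde F_k$ in the overlap estimate loses only a constant) while $\bigcup_{F\in\mathcal{F}}F$ still captures a fixed proportion of $\bigcup_{E\in\mathcal{E}}E$. The crucial realization is that the latter does not follow from the combinatorics of the selection alone but is forced by the weak-type hypothesis applied to $\ind{\bigcup_{F\in\mathcal{F}}F}$; once this is arranged, the $L^{p_*}$ overlap bound drops out of duality and Lorentz--Hölder as above. A secondary, purely technical difficulty is the reduction from an arbitrary $\mathcal{E}$ to finite (or countable) subcollections and the associated measurability bookkeeping, which is precisely where completeness and $\sigma$-finiteness of $\XXX$ are used.
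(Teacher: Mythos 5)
The paper does not prove this statement at all: it is quoted verbatim from C\'ordoba--Fefferman \cite{CF75} as background, so there is no in-paper argument to compare against. Your proposal is, in substance, the classical proof from that reference, and it is correct in outline. The direction $(2)\Rightarrow(1)$ via H\"older/duality applied to $\sum_{F}\ind{F}$ is exactly right, including the bookkeeping $\lambda A/C \leq C A^{1/p_*}\|f\|_{L^p}$ yielding $A^{1/p}\leq C^2\|f\|_{L^p}/\lambda$. For $(1)\Rightarrow(2)$, the greedy selection keeping only sets whose new part carries at least half their measure, the observation that at termination $\MMM_\BBB \ind{\bigcup_F F}>\tfrac12$ on $\bigcup_E E$ (so the weak-type hypothesis forces the mass bound), and the duality--plus--Lorentz-H\"older estimate $\sum_k\int_{F_k}h\,{\rm d}\mu\leq 2\int_{\bigcup_F F}\MMM_\BBB h\,{\rm d}\mu\lesssim_p \mu(\bigcup_E E)^{1/p_*}$ for the overlap bound are all the right ingredients; note that the maximality of $\mu(F_k)$ in your selection is never actually used, only the half-measure criterion. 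The two places that genuinely need care, and which you only gesture at, are (i) measurability of $\MMM_\BBB f$ and of $\bigcup_{E\in\mathcal E}E$ for an uncountable collection $\BBB$ --- the paper itself points out in Section~\ref{S3} that $\MMM_\SSS f$ can fail to be measurable, which is why one tacitly works with open sets or countable subcollections here --- and (ii) the exhaustion argument passing from finite subfamilies $\mathcal E'\subseteq\mathcal E$ with $\mu(\bigcup_{E\in\mathcal E'}E)\geq\tfrac12\mu(\bigcup_{E\in\mathcal E}E)$ to the general case (treating $\mu(\bigcup_{E\in\mathcal E}E)=\infty$ separately). Neither is an obstacle, but a complete write-up should spell them out.
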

	An analogous characterization of the differentiability of $L^p(\XXX)$ is also available.
	\begin{theorem}[$p=\infty$ in \cite{dP36}, $p \in (1,\infty)$ in \cite{HP55, Ha76}]\label{char_diff}
		Fix $p \in (1,\infty]$. Let $\BBB$ be a differentiation basis in a $\sigma$-finite complete metric measure space $\XXX$. The following are equivalent:
		\begin{enumerate}
			\item[{\rm(1)}]
			$\BBB$ differentiates $L^p(\XXX)$,
			
			\item[{\rm(2)}] 
			$\BBB$ has the \emph{covering strength $\phi_{p_*}(x) = x^{p_*}$}, i.e., for every measurable set $E \subseteq X$ with finite measure, 
			every differentiation basis $\mathcal{E}$ of $E$, and every $\varepsilon  \in (0,\infty)$, there is a finite collection $\mathcal{F} \subseteq \mathcal{E}$ such that
			\begin{equation*}
				\mu \Big( E \setminus \bigcup_{F \in \mathcal{F}} F \Big) = 0, \qquad \mu \Big( \bigcup_{F \in \mathcal{F}} F \setminus E \Big) < \varepsilon, \qquad \Big\| \sum_{F \in \mathcal{F}} \ind{F} - \ind{\bigcup_{F \in \mathcal{F}} F} \Big\|_{L^{p_*}(\XXX)}  < \varepsilon.
			\end{equation*}
		\end{enumerate}
	\end{theorem}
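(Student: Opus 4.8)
The plan is to prove the two implications by comparing, through $L^p$--$L^{p_*}$ duality, the averages $\frac{1}{\mu(F)}\int_{F}f\,d\mu$ with the overlap function $\sum_{F}\ind{F}$; the argument runs parallel to the proof of Theorem~\ref{char_max}, with the global weak-type inequality replaced by its localized, $\varepsilon$-quantified counterpart encoded in condition~(2). When $p=\infty$ we have $p_*=1$, the $L^{p_*}$-norm of $\sum_{F}\ind{F}-\ind{\bigcup_{F}F}$ equals the total overlap $\sum_{F}\mu(F)-\mu(\bigcup_{F}F)$, and the statement reduces to a Vitali-type almost-disjoint covering property in the spirit of de Possel.

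For $(2)\Rightarrow(1)$ I would argue directly. It suffices to show that (2) forces the upper derivate $\limsup_{B\ni x,\,\mathrm{diam}(B)\to0}\frac{1}{\mu(B)}\int_{B}f\,d\mu$ to be $\le f(x)$ for a.e.\ $x$ and every $f\in L^p(\XXX)$; applying this to $f$ and to $-f$ then yields differentiation. If it failed there would be rationals $\alpha<\beta$ and a set $E\subseteq\{f<\alpha\}$ of positive finite measure on which the upper derivate exceeds $\beta$. The sets $B\in\BBB$ of small diameter that meet $E$ and satisfy $\frac{1}{\mu(B)}\int_{B}f\,d\mu>\beta$ form a differentiation basis $\mathcal{E}$ of $E$, and feeding $E$, $\mathcal{E}$ and a small $\varepsilon$ into (2) gives a finite $\mathcal{F}\subseteq\mathcal{E}$ with $\mu(E\setminus\bigcup_{F}F)=0$, $\mu(\bigcup_{F}F\setminus E)<\varepsilon$, and $\|\sum_{F}\ind{F}-\ind{\bigcup_{F}F}\|_{L^{p_*}(\XXX)}<\varepsilon$. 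Summing the defining inequalities of $\mathcal{F}$,
\[
\beta\sum_{F\in\mathcal{F}}\mu(F)<\sum_{F\in\mathcal{F}}\int_{F}f\,d\mu=\int_{G}f\,d\mu+\int f\Big(\sum_{F\in\mathcal{F}}\ind{F}-\ind{G}\Big)\,d\mu,\qquad G:=\bigcup_{F\in\mathcal{F}}F,
\]
one estimates: the last integral is $\le\|f\|_{L^p}\,\varepsilon$ by H\"older; $\int_{G}f\,d\mu\le\alpha\,\mu(E)+\|f\|_{L^p}\,\varepsilon^{1/p}$ by splitting over $G\cap E$ and $G\setminus E$; and $\sum_{F}\mu(F)=\mu(G)+\int(\sum_{F}\ind{F}-\ind{G})\,d\mu$ lies in $[\mu(E),\mu(E)+C_E\varepsilon]$ for a finite $C_E=C_E(\mu(E))$ (again H\"older, since $\sum_{F}\ind{F}-\ind{G}\ge0$ is supported in $G$). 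Combining and letting $\varepsilon\to0$ gives $\beta\,\mu(E)\le\alpha\,\mu(E)$, hence $\beta\le\alpha$, a contradiction. The estimates hold for all signs of $\alpha,\beta$, so $f$ is never translated (which would fail when $\mu(X)=\infty$); for $p=\infty$ the same computation works with the $L^{p_*}$-norm replaced by the total overlap.

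For $(1)\Rightarrow(2)$ I would argue by contraposition: assume (2) fails for some finite-measure $E$, some differentiation basis $\mathcal{E}$ of $E$, and some $\varepsilon_0>0$, and build $f\in L^p(\XXX)$ that $\BBB$ does not differentiate. Since $\BBB$ differentiates the characteristic functions of finite-measure sets, a de Possel--type fact (for a.e.\ $x\in E$ the small members of $\mathcal{E}$ through $x$ have controlled overflow) guarantees finite subfamilies $\mathcal{F}\subseteq\mathcal{E}$ with $\mu(E\setminus\bigcup_{F}F)=0$ and $\mu(\bigcup_{F}F\setminus E)<\varepsilon_0$; by the failure of (2), every such $\mathcal{F}$ has $\|\sum_{F}\ind{F}-\ind{\bigcup_{F}F}\|_{L^{p_*}(\XXX)}\ge\varepsilon_0$. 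Dualizing, for each such $\mathcal{F}$ there is $g_{\mathcal{F}}\ge0$ with $\|g_{\mathcal{F}}\|_{L^p}\le1$ and $\sum_{F\in\mathcal{F}}\int_{F}g_{\mathcal{F}}\,d\mu-\int_{\bigcup_{F}F}g_{\mathcal{F}}\,d\mu\gtrsim\varepsilon_0$, so members of $\mathcal{F}$ of definite total measure inside $E$ carry averages of $g_{\mathcal{F}}$ bounded below; hence the upper derivate of $\int g_{\mathcal{F}}\,d\mu$ exceeds a fixed level on a subset of $E$ of definite measure, while $\|g_{\mathcal{F}}\|_{L^p}$ and $\int_{\bigcup_{F}F}g_{\mathcal{F}}\,d\mu$ stay small. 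Iterating over a sequence $\mathcal{F}_n$ with geometrically decaying error budgets and superposing the $g_{\mathcal{F}_n}$ on disjoint portions of $E$ then produces a single $f\in L^p(\XXX)$ whose upper derivate along $\BBB$ exceeds $f$ on a positive-measure subset of $E$, contradicting (1).

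I expect the main obstacle to be the last step of $(1)\Rightarrow(2)$: organizing the superposition so that the pieces do not interfere, the limit function genuinely lies in $L^p(\XXX)$, and its upper derivate is large almost everywhere on one fixed positive-measure set. This is the familiar but delicate exhaustion scheme underlying the de Possel and Hayes--Pauc density theorems; completeness of $\XXX$ enters here exactly as in the rest of the paper, to keep the sets produced by the covering arguments measurable.
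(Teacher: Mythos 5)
First, a point of order: the paper does not prove Theorem~\ref{char_diff} at all. It is imported from the literature (\cite{dP36} for $p=\infty$, \cite{HP55,Ha76} for $p\in(1,\infty)$) and used only as background, so there is no in-paper argument to compare yours against; I can only judge your sketch on its own terms. Your $(2)\Rightarrow(1)$ direction is essentially the standard (easy) half and is sound: the contradiction set-up, the choice of $\mathcal{E}$, and the three H\"older estimates do combine to give $\beta\mu(E)\le\alpha\mu(E)$ as $\varepsilon\to0$ (the exponent in your bound for $\int_{G\setminus E}f$ should be $\varepsilon^{1/p_*}$ rather than $\varepsilon^{1/p}$, which is harmless). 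The one point you should not wave away there is measurability: for an arbitrary differentiation basis the upper derivate need not be measurable (cf.\ Examples~\ref{E2}--\ref{E3}), so the set where $\overline{f}>\beta$ and $f<\alpha$ must be replaced by a measurable hull, and one must check that $\mathcal{E}$ remains a differentiation basis of that hull in the almost-everywhere sense required by~(2); this is the outer-measure bookkeeping of \cite{HP55}.

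The genuine gap is in $(1)\Rightarrow(2)$, exactly where you flag ``the main obstacle''. Two concrete problems. First, the duality step yields $g_{\mathcal F}\ge0$ with $\|g_{\mathcal F}\|_{L^p}\le1$ and $\sum_{F}\int_F g_{\mathcal F}-\int_G g_{\mathcal F}\gtrsim\varepsilon_0$, but this does \emph{not} imply that members of $\mathcal F$ of definite total measure inside $E$ carry averages of $g_{\mathcal F}$ bounded below. The quantity $\sum_{F\in\mathcal F}\mu(F)=\mu(G)+\|\sum_F\ind{F}-\ind{G}\|_{L^1}$ is precisely what the failure of~(2) leaves uncontrolled; if it has size $N\gg1$, your inequality is consistent with every average being as small as $\varepsilon_0/N$, and the sets that do carry larger averages may have enormous mutual overlap and hence cover an arbitrarily small portion of $E$. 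So neither a fixed level nor a fixed positive measure comes out of this step. Second, the proposed superposition ``on disjoint portions of $E$'' is not available: each $g_{\mathcal F}$ is essentially the dual extremizer of the overlap function of a cover of \emph{all} of $E$, so the $g_{\mathcal F_n}$ all live on the same set, and since $\|g_{\mathcal F_n}\|_{L^p}\le1$ with no decay, any $\ell^p$-summable reweighting needed to keep $f$ in $L^p(\XXX)$ destroys the lower bounds you would need on the averages. The actual Hayes--Pauc argument is organized differently: it iterates the covering failure inside residual sets and exploits the homogeneity of $\phi_{p_*}(x)=x^{p_*}$ to build a single $f=\sum_k c_k h_k\in L^p(\XXX)$ whose upper derivate is $\geq 1$ on a fixed set of positive measure (see \cite[Chapter~VII]{dG75}). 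As written, your contrapositive does not close, and there is the further unaddressed issue of why finite subfamilies achieving the first two conditions of~(2) exist at all under hypothesis~(1), which your argument presupposes.
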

	Under the additional assumption that $\BBB$ is translation invariant, Theorem~\ref{char_diff} for $p \in (1,\infty)$ was proved in \cite{Co76}. Furthermore, analogous characterizations for Orlicz spaces were implicit in the bounds for $\MMM_\BBB$ given in \cite{CF75}, and were explicitly exhibited in the study of the differentiation properties of $\BBB$ in \cite{Ha76a}. Nevertheless, the case of $L^1(\XXX)$ remains open (specifically, the implication $(1) \implies (2)$ is missing in the respective variants of both Theorem~\ref{char_max} and Theorem~\ref{char_diff}).
	
	Despite providing elegant geometric conditions, the characterizations may still be complicated to exploit, as the covering properties are difficult to assess. In the proof of Theorem~\ref{T1}, we will rely on the construction of explicit functions to show that $p \notin {\rm diff}(\BBB)$.
	
	We refer the interested reader to two surveys of Hagelstein \cite{Ha24, Ha24a} for more details about results, open problems, and references in the theory of differentiation of integrals.
	
	\subsection{Notation and basic definitions}\label{S1.3}
	Let $\XXX$ be a \textit{metric measure space}, that is, a triple $(X,\rho,\mu)$, where $X$ is a nonempty set, $\rho$ is a metric, and $\mu$ is a Borel measure. We say that $\XXX$ is \textit{complete} if every subset of a set of measure zero is itself measurable (and thus has measure zero). For every $x \in X$ and $r \in (0,\infty)$, we define the (open) ball centered at $x$ of radius $r$ as 
	\[
	B_\rho(x,r) \coloneqq \{ y \in X : \rho(x,y) < r \}.
	\]
	Later on, we omit the index $\rho$, since there is no risk of confusion. 
	
	For $p \in [1, \infty)$, the space $L^p(\XXX)$ consists of all measurable functions $f \colon X \to \RR$ such that the corresponding quantity
	\[
	\| f \|_{L^p(\XXX)} \coloneqq
	\Big( \int_X |f|^p \, {\rm d}\mu \Big)^{1/p}
	\]
	is finite. Similarly, $L^\infty(\XXX)$ consists of all measurable functions $f \colon X \to \RR$ such that
	\[
	\| f \|_{L^\infty(\XXX)} \coloneqq
	\inf \{ C \in [0,\infty] : |f(x)| \leq C \text{ for }\mu\text{-almost every } x \in X\}
	\]
	is finite. We identify two measurable functions if they are equal $\mu$-almost everywhere. We simply write $L^p(X)$ instead of $L^p(\XXX)$ when $\rho$ and $\mu$ are chosen canonically. 
	
	Suppose that, for every $x \in X$, we have a family $\BBB(x)$ of measurable sets $S \subseteq X$ such that $x \in S$ and $\mu(S) \in (0, \infty)$. A sequence \textit{$(S_n)_{n \in \NN}$ contracts to $x$} if $S_n \in \BBB(x)$ for every $n \in \NN$ and, moreover, there exists a sequence $(r_n)_{n \in \NN}$ of radii such that $\lim_{n \to \infty} r_n = 0$ and $S_n \subseteq B(x,r_n)$ for every $n \in \NN$; in each such case we write $(S_n)_{n \in \NN} \Rightarrow x$. 
	
	The whole family $\BBB \coloneqq \bigcup_{x \in X} \BBB(x)$ equipped with the relation $\Rightarrow$ is called a \textit{differentiation basis in $\XXX$} if for $\mu$-almost every $x \in X$ the set of sequences contracting to $x$ is nonempty. We say that $\BBB$ is \textit{uncentered} if 
	\[
	\BBB(x) = \{S \in \BBB : x \in S\}
	\]
	for every $x \in X$. If $\BBB$ is uncentered and, moreover, every $S \in \BBB$ is open, then $\BBB$ is called a \textit{Busemann--Feller basis} \cite[Chapter~II.2]{dG75}.
	
	Fix a pair $(\XXX, \BBB)$ and $p \in [1,\infty]$. We say that \textit{$\BBB$ differentiates $L^p(\XXX)$} if, for every fixed $f \in L^p(\XXX)$, the following condition 
	\begin{align} \label{D} \tag{D}
		\lim_{n \to \infty} {\rm Avg}_f(S_n) = f(x)
		\quad \text{for every} \quad
		(S_n)_{n \in \NN} \Rightarrow x
	\end{align}
	holds for $\mu$-almost every $x \in X$. 
	Here, for any $E \subseteq X$ satisfying $\mu(E) \in (0,\infty)$, we write ${\rm Avg}_f(E)$ to denote the average of $f$ over $E$, that is,
	\[
	{\rm Avg}_f(E) \coloneqq \frac{1}{\mu(E)} \int_E f \, {\rm d}\mu.
	\]
	The set of all $p \in [1,\infty]$ such that $\BBB$ differentiates $L^p(\XXX)$ is denoted by 
	${\rm diff}(\BBB)$. Furthermore, we say that $\BBB$ is a \emph{density basis} if it differentiates $\{  \ind{E} \colon \mu(E) \in [0,\infty] \}$, which is a strict subset of $L^\infty(\XXX)$.  
	
	For every $p \in [1,\infty]$ and $f \in L^p(\XXX)$, we define the exceptional set
	\begin{align*} 
		E(f) \coloneqq \{ x \in X : \overline{f}(x) \neq f(x) \text{ or } \underline{f}(x) \neq f(x) \}.
	\end{align*}
	Here $\overline{f} \colon X \to [-\infty, \infty]$ and $\underline{f} \colon X \to [-\infty, \infty]$ are defined respectively by
	\begin{align*}
		\overline{f}(x) & \coloneqq \max \big \{ c \in [-\infty,\infty] : \lim_{n \to \infty} {\rm Avg}_f(S_n) = c \text{ for some } (S_n)_{n \in \NN} \Rightarrow x \big\}, \\
		\underline{f}(x) & \coloneqq \, \min \big \{ c \in [-\infty,\infty] : \lim_{n \to \infty} {\rm Avg}_f(S_n) = c \text{ for some } (S_n)_{n \in \NN} \Rightarrow x \big\}. 
	\end{align*}
	Observe that $\overline{f}$ and $\underline{f}$ are measurable if $\BBB$ is uncentered and consists of either countably many sets or arbitrarily many open sets. Indeed, for every $\lambda \in \RR$ we have
	\[
	\{ x \in X : \overline{f}(x) \geq \lambda \} = \bigcap_{n \in \NN} \bigcup_{S \in \BBB_{\lambda, n}} S,  
	\]
	where $\BBB_{\lambda, n}$ consists of all $S \in \BBB$ such that ${\rm Avg}_f(S) \geq \lambda - 2^{-n}$ and $S \subseteq B$ for some ball $B \subseteq X$ of radius $2^{-n}$. 
	By symmetry, $\{ x \in X : \underline{f}(x) \leq \lambda \}$ can be described in terms of $S \in \BBB$ such that ${\rm Avg}_{-f}(S) \geq -\lambda - 2^{-n}$ and $S \subseteq B$ for some ball $B \subseteq X$ of radius $2^{-n}$. 
	
	\subsection{Structure of the article} \label{S1.4}
	The rest of the paper is organized as follows.
	
	In Section~\ref{S2}, we formulate a monotonicity property which justifies the first part of Theorem~\ref{T1} (see Proposition~\ref{P1}). We also comment on the special case $p=\infty$, discuss the measurability issues mentioned before, and present a few instructive examples.
	
	In Section~\ref{S3}, we study maximal operators and their role in the problem of differentiation of integrals. In particular, we observe that nestedness interacts well with weak-type maximal inequalities in a setting that extends beyond the realm of martingale-type bases. The resulting statement may be of independent interest (see Proposition~\ref{P2}).
	
	In Section~\ref{S4}, we deal with the infinite-dimensional torus $\tom$. We introduce a class of rectangular bases in $\tom$ that will be used to prove Theorem~\ref{T0} (see Proposition~\ref{P3}).
	
	In Section~\ref{S5}, we prove Theorem~\ref{T0}, Corollary~\ref{Cor1}, and Theorem~\ref{T1}.
	
	\subsection*{Acknowledgments} We thank Paul Hagelstein and Ioannis Parissis for reading the first version of the manuscript, and for providing valuable comments and suggestions. 
	
	\section{Monotonicity property}\label{S2}
	
	We begin this section with an observation that leads to the first part of Theorem~\ref{T1}.
	
	\begin{proposition}\label{P1}
		Fix a pair $(\XXX, \BBB)$ and $p_1,p_2 \in [1,\infty)$ such that $p_1 < p_2$. Then
		\[
		p_1 \in {\rm diff}(\BBB) \implies p_2 \in {\rm diff}
		(\BBB)
		\]
		if any of the following two cases occurs:
		\begin{enumerate} [label={\rm (\alph*)}]
			\item $\XXX$ is complete and $p_2 < \infty$,
			\item $X = \bigcup_{n \in \NN} G_n$ for open sets $G_n \subseteq X$ satisfying $\mu(G_n) < \infty$. 
		\end{enumerate}
	\end{proposition}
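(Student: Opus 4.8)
The plan is to deduce, in either case, that $\BBB$ differentiates a fixed $f \in L^{p_2}(\XXX)$ from the hypothesis that it differentiates a suitable family of auxiliary functions lying in $L^{p_1}(\XXX)$. The one genuine obstruction is that $L^{p_2}(\XXX) \not\subseteq L^{p_1}(\XXX)$ as soon as $\mu(X) = \infty$, so $f$ itself cannot be fed into $p_1 \in {\rm diff}(\BBB)$; the remedy is to split $f$ so that each piece either lies in $L^{p_1}(\XXX)$ or is uniformly small, and then recombine limits of averages. Throughout one uses that ``$\BBB$ differentiates $L^p(\XXX)$'' means condition~\eqref{D} holds off a $\mu$-null set, that a countable union of $\mu$-null sets is again $\mu$-null, and that the completeness assumed in (a) is there only to neutralise the measurability pathologies of Example~\ref{E2} and Example~\ref{E3}.

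\textit{Case (a).} Fix $f \in L^{p_2}(\XXX)$ and, for $k \in \NN$, set $g_k \coloneqq f\,\ind{\{|f| > 1/k\}}$ and $h_k \coloneqq f - g_k = f\,\ind{\{|f| \leq 1/k\}}$. On $\{|f| > 1/k\}$ one has $|f|^{p_1} \leq k^{p_2 - p_1}|f|^{p_2}$ (here the finiteness of $p_2$ enters), whence $g_k \in L^{p_1}(\XXX)$ with $\|g_k\|_{L^{p_1}(\XXX)}^{p_1} \leq k^{p_2 - p_1}\|f\|_{L^{p_2}(\XXX)}^{p_2}$. By $p_1 \in {\rm diff}(\BBB)$ the set $N_k$ on which \eqref{D} fails for $g_k$ is $\mu$-null, hence so is $N \coloneqq \bigcup_{k \in \NN} N_k$. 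I would then fix $x \notin N$ and $(S_n)_{n \in \NN} \Rightarrow x$, and use ${\rm Avg}_f(S_n) = {\rm Avg}_{g_k}(S_n) + {\rm Avg}_{h_k}(S_n)$ together with $|{\rm Avg}_{h_k}(S_n)| \leq 1/k$ and ${\rm Avg}_{g_k}(S_n) \to g_k(x)$ to obtain
\[
g_k(x) - \frac{1}{k} \;\leq\; \liminf_{n \to \infty}{\rm Avg}_f(S_n) \;\leq\; \limsup_{n \to \infty}{\rm Avg}_f(S_n) \;\leq\; g_k(x) + \frac{1}{k} \qquad \text{for every } k \in \NN .
\]
Letting $k \to \infty$ and using $g_k(x) \to f(x)$ forces $\lim_{n \to \infty}{\rm Avg}_f(S_n) = f(x)$, i.e.~\eqref{D} holds for $f$ at every $x \notin N$, so $p_2 \in {\rm diff}(\BBB)$.

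\textit{Case (b).} Fix $f \in L^{p_2}(\XXX)$ and write $X = \bigcup_{n \in \NN} G_n$ with $G_n$ open and $\mu(G_n) < \infty$. Each $f\,\ind{G_n}$ is supported on a set of finite measure, hence belongs to $L^{p_1}(\XXX)$ (this does not use $p_2 < \infty$), so $p_1 \in {\rm diff}(\BBB)$ supplies a $\mu$-null set $N_n$ off which \eqref{D} holds for $f\,\ind{G_n}$. The key observation is local: if $x \in G_n$ and $(S_j)_{j \in \NN} \Rightarrow x$, then openness of $G_n$ gives $S_j \subseteq G_n$ for all large $j$, so ${\rm Avg}_f(S_j) = {\rm Avg}_{f\ind{G_n}}(S_j)$ eventually while $f(x) = (f\,\ind{G_n})(x)$; hence \eqref{D} holds for $f$ at $x$ exactly when it holds for $f\,\ind{G_n}$ at $x$. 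Therefore the set where \eqref{D} fails for $f$ is contained in $\bigcup_{n \in \NN} N_n$, a $\mu$-null set, and $p_2 \in {\rm diff}(\BBB)$ once more.

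All the estimates involved are routine; the point that deserves care is the bookkeeping with null sets and representatives — fixing one representative of $f$ before forming $g_k, h_k$ (resp.~$f\,\ind{G_n}$), verifying that the exceptional sets delivered by $p_1 \in {\rm diff}(\BBB)$ are $\mu$-negligible in the appropriate (possibly outer) sense, and checking that countably many of them still constitute a $\mu$-null set. This is precisely the juncture at which the completeness hypothesis in (a) is convenient and at which the pathologies of Example~\ref{E2} and Example~\ref{E3} would otherwise interfere.
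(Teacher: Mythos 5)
Your proposal is correct and follows essentially the same route as the paper: in case (a) the same truncation $f\,\ind{\{|f|>1/k\}}$ with the complementary piece uniformly small in $L^\infty$ (the paper phrases the sandwich via $\overline{f},\underline{f}$ rather than a pointwise $\liminf$/$\limsup$ argument, and invokes completeness exactly where you flag it, namely to upgrade $E(f)\subseteq\bigcup_k N_k$ to measurability of $E(f)$), and in case (b) the same localisation to open sets of finite measure, where your ``exactly when'' observation supplies the equality $E(f)\cap G_n=N_n\cap G_n$ that makes completeness unnecessary there. No substantive differences.
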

	
	\begin{proof}
		First, assume that $\XXX$ is complete and $p_2 < \infty$. Fix $f \in L^{p_2}(\XXX)$. We shall show that $E(f)$ is a set of measure zero. For every $n \in \NN$, define $f_n \coloneqq f \cdot \ind{F_n}$, where 
		\[
		F_n \coloneqq \{x \in X : |f(x)| \geq 2^{-n} \}.
		\]
		Note that $\|f-f_n\|_{L^\infty(\XXX)} \leq 2^{-n}$ and $f_n \in L^{p_1}(\XXX)$ because
		\[
		\| f_n \|_{L^{p_1}(\XXX)}^{p_1} =
		\int_X |f_n|^{p_1} \, {\rm d}\mu(x) \leq
		\int_X |f|^{p_1} \frac{|f|^{p_2-p_1}}{2^{-n(p_2-p_1)}} \, {\rm d}\mu(x) 
		= 2^{n(p_2 - p_1)} \| f \|_{L^{p_2}(\XXX)}^{p_2} < \infty.
		\]
		We also deduce that
		\[
		\|\overline{f}-\overline{f_n}\|_{L^\infty(\XXX)} \leq 2^{-n}
		\quad \text{and} \quad
		\|\underline{f}-\underline{f_n}\|_{L^\infty(\XXX)} \leq 2^{-n}.
		\]
		Now, suppose that $x \in E(f)$. Then, for some $n \in \NN$, we have 
		\[
		|\overline{f}(x) - f(x)| > 2^{-n+1}
		\quad \text{or} \quad 
		|\underline{f}(x) - f(x)| > 2^{-n+1}.
		\]
		We claim that if one of these inequalities holds, then $x \in E(f_n)$. Indeed, observe that
		\[
		|\overline{f_n}(x) - f_n(x)| \geq
		|\overline{f}(x) - f(x)| 
		- \|\overline{f}-\overline{f_n}\|_{L^\infty(\XXX)}
		- \|f-f_n\|_{L^\infty(\XXX)} > 0
		\]
		if the first inequality holds and, analogously, $|\underline{f_n}(x) - f_n(x)| > 0$ if the second inequality holds. 
		This proves our claim and, consequently, we obtain $E(f) \subseteq \bigcup_{n \in \NN} E(f_n)$. Since $p_1 \in {\rm diff}(\BBB)$, we have $\mu(E(f_n)) = 0$ for every $n \in \NN$. Therefore, $\bigcup_{n \in \NN} E(f_n)$ is a set of measure zero which implies $\mu(E(f)) = 0$ because $\XXX$ is complete. 
		
		Next, assume that $X = \bigcup_{n \in \NN} G_n$ for open sets $G_n \subseteq X$ satisfying $\mu(G_n) < \infty$. As before, fix $f \in L^{p_2}(\XXX)$ and, for every $n \in \NN$, define $f_n \coloneqq f \cdot \ind{F_n}$, where
		\[
		F_n \coloneqq G_1 \cup \cdots \cup G_n.
		\]
		Since $\mu(F_n) < \infty$, we have $f_n \in L^{p_1}(\XXX)$ so that $\mu(E(f_n)) = 0$ because $p_1 \in {\rm diff}(\BBB)$. Moreover, since $F_n$ is open and $f_n$ agrees with $f$ on $F_n$, we have $E(f) \cap F_n = E(f_n) \cap F_n$. Hence,
		\[
		E(f) = \bigcup_{n \in \NN} \big( E(f) \cap F_n \big) = \bigcup_{n \in \NN} \big( E(f_n) \cap F_n \big)
		\]
		and the set on the right-hand side has measure zero, as desired. 
	\end{proof}
	
	\begin{remark} \label{rem1}
		The only instance where we use the completeness of $\XXX$ is in deducing that $E(f)$ is measurable from the inclusion $E(f) \subseteq \bigcup_{n \in \NN} E(f_n)$ and the fact that $\bigcup_{n \in \NN} E(f_n)$ has measure zero. The measurability aspect cannot be ignored, as in general $E(f)$ may not be measurable (see Example~\ref{E2} and Example~\ref{E3}). Moreover, $E(f) \subsetneq \bigcap_{n \in \NN} E(f_n)$ may even happen (see Example~\ref{E4}). Nonetheless, it remains unclear whether completeness is truly essential; in particular, it is not whenever we know \emph{a~priori} that $E(f)$ is measurable for every $p \in [1,\infty]$ and $f \in L^p(\XXX)$.  
	\end{remark}
	
	The example below is key to understanding why the case $p_2=\infty$ in Proposition~\ref{P1} is special. Namely, we construct a pair $(\XXX,\BBB)$ such that ${\rm diff}(\BBB) = [1,\infty)$.
	
	\begin{example} \label{E1}
		Let $\XXX$ be defined as follows (see Figure~\ref{Pic0}). Set $X \coloneqq I \cup K \subseteq \RR^2$, where
		\begin{align*}
			I & \coloneqq \{(u,0) : u \in (0,1]\}, \\
			K & \coloneqq \{ (u, 2^{-j}) : u \in (0,1], \, j \in \NN\}. 
		\end{align*}
		For $(u,v), (u',v') \in X$, define 
		\[
		\rho((u,v),(u',v')) \coloneqq 
		\begin{cases}
		|v-v'| & \text{if } u=u', \\
		1 & \text{if } u \neq u'. 
		\end{cases} 
		\]
		Finally, set $\mu \coloneqq \mu_I + \mu_K$, where $\mu_I(E) \coloneqq \infty$ if $E \cap I$ is uncountable or $\mu_I(E) \coloneqq 0$ otherwise, while $\mu_K$ is the counting measure on $K$. Notice that $\XXX$ is complete because all subsets of $X$ are measurable. 
		
		Next, let $\BBB$ be an uncentered basis defined as follows. Set $\BBB \coloneqq \BBB_K \cup \BBB_{IK}$, where
		\begin{align*}
			\BBB_K & \coloneqq \big\{ \{(u, 2^{-j})\} : u \in (0,1], \, j \in \NN \big\}, \\
			\BBB_{IK} & \coloneqq \big\{ \{(u,0),(u, 2^{-j})\} : u \in (0,1], \, j \in \NN \big\}.
		\end{align*}
		
		Now, fix $p \in [1,\infty)$ and let $f \in L^p(\XXX)$. We shall verify that condition~\eqref{D} holds for $\mu$-almost every $x \in X$. First, for every $x \in K$, condition~\eqref{D} holds trivially because if $(S_n)_{n \in \NN} \Rightarrow x$, then $S_n = \{x\}$ for all sufficiently large $n \in \NN$. On the other hand, for every $x \in I$, if $(S_n)_{n \in \NN} \Rightarrow x$, then $S_n \in \BBB_{IK}$ for all sufficiently large $n \in \NN$. Since $f \in L^p(\XXX)$, we have $\lim_{n \in \NN} {\rm Avg}_f(S_n) = 0$. Moreover, $f(x)=0$ for $\mu$-almost every $x \in I$, again due to the fact that $f \in L^p(\XXX)$. Hence, condition~\eqref{D} holds for $\mu$-almost every $x \in I$.
		
		Finally, let $p=\infty$ and consider $g \coloneqq \ind{K} \in L^{\infty}(\XXX)$. It is easy to verify that $\overline{g} = \underline{g} = \ind{X}$. Hence, $\mu(E(g)) = \mu(I) = \infty$ and so $\BBB$ fails to differentiate  $L^{\infty}(\XXX)$.  
	\end{example}
	
	\begin{center}
		\begin{figure}[H]
			\begin{tikzpicture}
			[
			scale=1.75,
			axis/.style={very thick,->,>=stealth'},
			important line/.style={thick},
			dashed line/.style={dashed, thin}
			]
			
			\draw[axis] (-0.75,0) -- (4.75,0) node(xline)[right]
			{};
			\draw[axis] (0,-0.75) -- (0,2.75) node(yline)[above] {};
			
			\draw (4,-0.1) node[below] {$1$} -- (4,0.1);
			\draw (-0.1,2) node[left] {$\frac{1}{2}$} -- (0.1,2);
			\draw (-0.1,1) node[left] {$\frac{1}{4}$} -- (0.1,1);
			\draw[important line, line width=1mm] (0,0) -- (4,0);
			
			\draw[dashed line, line width=0.1mm] (0.25,0) -- (0.25,2);
			\draw[dashed line, line width=0.1mm] (0.5,0) -- (0.5,2);
			\draw[dashed line, line width=0.1mm] (0.75,0) -- (0.75,2);
			\draw[dashed line, line width=0.1mm] (1,0) -- (1,2);
			\draw[dashed line, line width=0.1mm] (1.25,0) -- (1.25,2);
			\draw[dashed line, line width=0.1mm] (1.5,0) -- (1.5,2);
			\draw[dashed line, line width=0.1mm] (1.75,0) -- (1.75,2);
			\draw[dashed line, line width=0.1mm] (2,0) -- (2,2);
			\draw[dashed line, line width=0.1mm] (2.25,0) -- (2.25,2);
			\draw[dashed line, line width=0.1mm] (2.5,0) -- (2.5,2);
			\draw[dashed line, line width=0.1mm] (2.75,0) -- (2.75,2);
			\draw[dashed line, line width=0.1mm] (3,0) -- (3,2);
			\draw[dashed line, line width=0.1mm] (3.25,0) -- (3.25,2);
			\draw[dashed line, line width=0.1mm] (3.5,0) -- (3.5,2);
			\draw[dashed line, line width=0.1mm] (3.75,0) -- (3.75,2);
			\draw[dashed line, line width=0.1mm] (4,0) -- (4,2);
			
			\node at (0.25,2)[circle,fill,inner sep=1.5pt]{};
			\node at (0.5,2)[circle,fill,inner sep=1.5pt]{};
			\node at (0.75,2)[circle,fill,inner sep=1.5pt]{};
			\node at (1,2)[circle,fill,inner sep=1.5pt]{};
			\node at (1.25,2)[circle,fill,inner sep=1.5pt]{};
			\node at (1.5,2)[circle,fill,inner sep=1.5pt]{};
			\node at (1.75,2)[circle,fill,inner sep=1.5pt]{};
			\node at (2,2)[circle,fill,inner sep=1.5pt]{};
			\node at (2.25,2)[circle,fill,inner sep=1.5pt]{};
			\node at (2.5,2)[circle,fill,inner sep=1.5pt]{};
			\node at (2.75,2)[circle,fill,inner sep=1.5pt]{};
			\node at (3,2)[circle,fill,inner sep=1.5pt]{};
			\node at (3.25,2)[circle,fill,inner sep=1.5pt]{};
			\node at (3.5,2)[circle,fill,inner sep=1.5pt]{};
			\node at (3.75,2)[circle,fill,inner sep=1.5pt]{};
			\node at (4,2)[circle,fill,inner sep=1.5pt]{};
			
			\node at (0.25,1)[circle,fill,inner sep=1.5pt]{};
			\node at (0.5,1)[circle,fill,inner sep=1.5pt]{};
			\node at (0.75,1)[circle,fill,inner sep=1.5pt]{};
			\node at (1,1)[circle,fill,inner sep=1.5pt]{};
			\node at (1.25,1)[circle,fill,inner sep=1.5pt]{};
			\node at (1.5,1)[circle,fill,inner sep=1.5pt]{};
			\node at (1.75,1)[circle,fill,inner sep=1.5pt]{};
			\node at (2,1)[circle,fill,inner sep=1.5pt]{};
			\node at (2.25,1)[circle,fill,inner sep=1.5pt]{};
			\node at (2.5,1)[circle,fill,inner sep=1.5pt]{};
			\node at (2.75,1)[circle,fill,inner sep=1.5pt]{};
			\node at (3,1)[circle,fill,inner sep=1.5pt]{};
			\node at (3.25,1)[circle,fill,inner sep=1.5pt]{};
			\node at (3.5,1)[circle,fill,inner sep=1.5pt]{};
			\node at (3.75,1)[circle,fill,inner sep=1.5pt]{};
			\node at (4,1)[circle,fill,inner sep=1.5pt]{};
			
			\node at (0.25,0.5)[circle,fill,inner sep=1.5pt]{};
			\node at (0.5,0.5)[circle,fill,inner sep=1.5pt]{};
			\node at (0.75,0.5)[circle,fill,inner sep=1.5pt]{};
			\node at (1,0.5)[circle,fill,inner sep=1.5pt]{};
			\node at (1.25,0.5)[circle,fill,inner sep=1.5pt]{};
			\node at (1.5,0.5)[circle,fill,inner sep=1.5pt]{};
			\node at (1.75,0.5)[circle,fill,inner sep=1.5pt]{};
			\node at (2,0.5)[circle,fill,inner sep=1.5pt]{};
			\node at (2.25,0.5)[circle,fill,inner sep=1.5pt]{};
			\node at (2.5,0.5)[circle,fill,inner sep=1.5pt]{};
			\node at (2.75,0.5)[circle,fill,inner sep=1.5pt]{};
			\node at (3,0.5)[circle,fill,inner sep=1.5pt]{};
			\node at (3.25,0.5)[circle,fill,inner sep=1.5pt]{};
			\node at (3.5,0.5)[circle,fill,inner sep=1.5pt]{};
			\node at (3.75,0.5)[circle,fill,inner sep=1.5pt]{};
			\node at (4,0.5)[circle,fill,inner sep=1.5pt]{};
			
			\node at (0.25,0.25)[circle,fill,inner sep=1.5pt]{};
			\node at (0.5,0.25)[circle,fill,inner sep=1.5pt]{};
			\node at (0.75,0.25)[circle,fill,inner sep=1.5pt]{};
			\node at (1,0.25)[circle,fill,inner sep=1.5pt]{};
			\node at (1.25,0.25)[circle,fill,inner sep=1.5pt]{};
			\node at (1.5,0.25)[circle,fill,inner sep=1.5pt]{};
			\node at (1.75,0.25)[circle,fill,inner sep=1.5pt]{};
			\node at (2,0.25)[circle,fill,inner sep=1.5pt]{};
			\node at (2.25,0.25)[circle,fill,inner sep=1.5pt]{};
			\node at (2.5,0.25)[circle,fill,inner sep=1.5pt]{};
			\node at (2.75,0.25)[circle,fill,inner sep=1.5pt]{};
			\node at (3,0.25)[circle,fill,inner sep=1.5pt]{};
			\node at (3.25,0.25)[circle,fill,inner sep=1.5pt]{};
			\node at (3.5,0.25)[circle,fill,inner sep=1.5pt]{};
			\node at (3.75,0.25)[circle,fill,inner sep=1.5pt]{};
			\node at (4,0.25)[circle,fill,inner sep=1.5pt]{};
			
			\node (A) at (0,0) {};
			\node (E) at (4,0) {};
			\node (I) at (4,2) {};
			
			\draw [thick,
			decoration={brace,mirror,raise=0.15cm},
			decorate] (A) -- (E)
			node [pos=0.5,anchor=north,yshift=-0.2cm] {$I$};
			
			\draw [thick,
			decoration={brace,mirror,raise=0.15cm},
			decorate] (E) -- (I)
			node [pos=0.5,anchor=north,xshift=0.5cm, yshift=0.3cm] {$K$};
			
			\end{tikzpicture}
			\vspace{-1em}
			\caption{The space $\XXX$ from Example~\ref{E1}.}
			\label{Pic0}
			\vspace{-1.5em}
		\end{figure}
	\end{center}
	
	Example~\ref{E1} is quite peculiar. Namely, we have $E(\ind{K}) = I$, while there is no subset $I' \subseteq I$ such that $\mu(I') \in (0, \infty)$. We comment on this issue below. 
	
	\begin{remark} \label{rem2}
		Let $(\XXX, \BBB)$ be a pair such that $\infty \notin {\rm diff}(\BBB)$. We claim that either ${\rm diff}(\BBB) = \emptyset$ or, for every $f \in L^\infty(\XXX)$, the subsets of $E(f)$ satisfy a certain zero--infinity dichotomy. 
		
		To see this, consider $g \in L^\infty(\XXX)$ such that $E(g)$  is not a set of measure zero. Suppose that there exists a subset $E \subseteq E(g)$ which has a superset $E'$ of finite measure but has no supersets of measure zero (including $E$ itself). Fix $p \in [1,\infty)$. Since
		\[
		E = \{ x \in E : \overline{g}(x) > g(x) \} \cup  \{ x \in E :  \underline{g}(x) < g(x)\},
		\]
		one of the sets on the right-hand side, call them $E_1$ and $E_2$, has no supersets of measure zero. Suppose that $E_2$ has this property (the remaining case is symmetric). We define
		\[
		h \coloneqq (g + \|g\|_{L^\infty(\XXX)}) \cdot \ind{E'} \in L^p(\XXX). 
		\]
		Since $g + \|g\|_{L^\infty(\XXX)}$ is nonnegative, we obtain $E_2 \subseteq \{x \in E : \underline{h}(x) < h(x)\} \subseteq E(h)$. It follows that $E(h)$, as a superset of $E_2$, is not a set of measure zero. Hence, $p \notin {\rm diff}(\BBB)$.
		
		To summarize, if $\infty \notin {\rm diff}(\BBB)$ and ${\rm diff}(\BBB) \neq \emptyset$, then for every $f \in L^\infty(\XXX)$ and every $E \subseteq E(f)$ either $E$ has a superset of measure zero or $E$ has no supersets of finite measure.
	\end{remark}
	
	Next, we present two examples illustrating the measurability issues. While the first is somewhat more natural, the second involves a differentiation basis which is uncentered.
	
	\begin{example} \label{E2}
		Let $\XXX$ be defined as follows (see Figure~\ref{Pic1}). Set $X \coloneqq I \cup K \subseteq \RR^2$, where
		\begin{align*}
			I & \coloneqq \{(u,0) : u \in (0,1]\}, \\
			K & \coloneqq \{ (i 2^{-j}, 2^{-j}) : (i,j) \in \Delta\}.
		\end{align*}
		Here $\Delta \coloneqq \{(i,j) \in \NN^2 : i \leq 2^j\} \subseteq \NN^2$. Let $\rho$ be the usual two-dimensional Euclidean distance function truncated to $X \times X$. Finally, set $\mu \coloneqq \mu_I + \mu_K$, where $\mu_I$ is the one-dimensional Lebesgue measure on $I$ and $\mu_K$ is the counting measure on $K$. 
		
		Next, let $\BBB$ be defined as follows. Set $\BBB \coloneqq \BBB_I \cup \BBB_K \cup \BBB_{IK}$, where
		\begin{align*}
			\BBB_I & \coloneqq \{ I_{ij} : (i,j) \in \Delta \}, \\
			\BBB_K & \coloneqq \{ K_{ij} : (i,j) \in \Delta \}, \\
			\BBB_{IK} & \coloneqq \{ I_{ij} \cup K_{ij} : (i,j) \in \Delta \}.
		\end{align*}
		Here, for every $(i,j) \in \Delta$, we use the auxiliary sets
		\begin{align*}
			I_{ij} & \coloneqq \{(u,0) : u \in ((i-1)2^{-j}, i2^{-j}]\}, \\
			K_{ij} & \coloneqq \{(i 2^{-j}, 2^{-j})\}.
		\end{align*}
		This time we do not consider $\BBB$ as an uncentered basis. Instead, we choose any subset $F \subseteq (0,1]$ which is not measurable with respect to Lebesgue measure on $\RR$, and set
		\[
		\BBB(x) \coloneqq
		\begin{cases}
		\{E \in \BBB\phantom{_I} : x \in E\} & \text{if } x \in K \cup (F \times \{0\}), \\
		\{E \in \BBB_I : x \in E\} & \text{if } x \in I \setminus (F \times \{0\}).
		\end{cases}
		\]
		
		Fix $p \in [1,\infty]$ and consider $g \coloneqq \ind{I} \in L^p(\XXX)$. Then $\overline{g} = \ind{I}$, since a point $x \in X$ can be approximated by the sets $I_{i,j}$ if and only if $x \in I$. Moreover, if $x \in I \setminus F$, then $x$ cannot be approximated by the sets $I_{ij} \cup K_{ij}$ so that $\underline{g} = \ind{I \setminus F}$. Hence, $E(g)$ is not measurable.   
	\end{example}
	
	\begin{example} \label{E3}
		Let $\XXX$ be as in Example~\ref{E2}. Next, let $\BBB$ be an uncentered basis defined as follows. Set $\BBB \coloneqq \BBB_I \cup \BBB_K \cup \BBB_{IK}$, where $\BBB_I$ and $\BBB_K$ are as in Example~\ref{E2}, while
		\[
		\BBB_{IK} \coloneqq \big\{ \{x\} \cup K_{ij} : x \in I_{ij} \cap F, \, (i,j) \in \Delta \big\}.
		\]
		Here $\Delta, I_{ij}, K_{ij}, F$ are also as in Example~\ref{E2}.
		As before, fix $p \in [1,\infty]$ and consider $g \coloneqq \ind{I} \in L^p(\XXX)$. Then $\overline{g} = \ind{I}$ and $\underline{g} = \ind{I \setminus F}$. Hence, $E(g)$ is not measurable. The relevance of Example~\ref{E3} will be understood after the discussion at the beginning of Section~\ref{S3}.
	\end{example}
	
	\begin{center}
		\begin{figure}[H]
			\begin{tikzpicture}
			[
			scale=1.75,
			axis/.style={very thick,->,>=stealth'},
			important line/.style={thick},
			dashed line/.style={dashed, thin}
			]
			
			\draw[axis] (-0.75,0) -- (4.75,0) node(xline)[right]
			{};
			\draw[axis] (0,-0.75) -- (0,2.75) node(yline)[above] {};
			
			\draw (4,-0.1) node[below] {$1$} -- (4,0.1);
			\draw (2,-0.1) node[below] {$\frac{1}{2}$} -- (2,0.1);
			\draw (-0.1,2) node[left] {$\frac{1}{2}$} -- (0.1,2);
			\draw (-0.1,1) node[left] {$\frac{1}{4}$} -- (0.1,1);
			\draw[important line, line width=1mm] (0,0) -- (4,0);
			
			\draw[dashed line, line width=0.1mm] (4,0) -- (4,2);
			\draw[dashed line, line width=0.1mm] (2,0) -- (2,2);
			\draw[dashed line, line width=0.1mm] (1,0) -- (1,1);
			\draw[dashed line, line width=0.1mm] (3,0) -- (3,1);
			
			\draw[dashed line, line width=0.1mm] (0.5,0) -- (0.5,0.5);
			\draw[dashed line, line width=0.1mm] (1.5,0) -- (1.5,0.5);
			\draw[dashed line, line width=0.1mm] (2.5,0) -- (2.5,0.5);
			\draw[dashed line, line width=0.1mm] (3.5,0) -- (3.5,0.5);
			
			\draw[dashed line, line width=0.1mm] (2,0) -- (4,2);
			\draw[dashed line, line width=0.1mm] (0,0) -- (2,2);
			\draw[dashed line, line width=0.1mm] (1,0) -- (2,1);
			\draw[dashed line, line width=0.1mm] (3,0) -- (4,1);
			
			\draw[dashed line, line width=0.1mm] (0.5,0) -- (1,0.5);
			\draw[dashed line, line width=0.1mm] (1.5,0) -- (2,0.5);
			\draw[dashed line, line width=0.1mm] (2.5,0) -- (3,0.5);
			\draw[dashed line, line width=0.1mm] (3.5,0) -- (4,0.5);
			
			\draw[dashed line, line width=0.1mm] (0.25,0) -- (0.25,0.25);
			\draw[dashed line, line width=0.1mm] (1.25,0) -- (1.25,0.25);
			\draw[dashed line, line width=0.1mm] (2.25,0) -- (2.25,0.25);
			\draw[dashed line, line width=0.1mm] (3.25,0) -- (3.25,0.25);
			\draw[dashed line, line width=0.1mm] (0.75,0) -- (0.75,0.25);
			\draw[dashed line, line width=0.1mm] (1.75,0) -- (1.75,0.25);
			\draw[dashed line, line width=0.1mm] (2.75,0) -- (2.75,0.25);
			\draw[dashed line, line width=0.1mm] (3.75,0) -- (3.75,0.25);
			
			\draw[dashed line, line width=0.1mm] (0.25,0) -- (0.5,0.25);
			\draw[dashed line, line width=0.1mm] (1.25,0) -- (1.5,0.25);
			\draw[dashed line, line width=0.1mm] (2.25,0) -- (2.5,0.25);
			\draw[dashed line, line width=0.1mm] (3.25,0) -- (3.5,0.25);
			\draw[dashed line, line width=0.1mm] (0.75,0) -- (1,0.25);
			\draw[dashed line, line width=0.1mm] (1.75,0) -- (2,0.25);
			\draw[dashed line, line width=0.1mm] (2.75,0) -- (3,0.25);
			\draw[dashed line, line width=0.1mm] (3.75,0) -- (4,0.25);
			
			\node at (2,2)[circle,fill,inner sep=1.5pt]{};
			\draw (2,2) node[right] {$K_{11}$} -- (2,2);
			\node at (4,2)[circle,fill,inner sep=1.5pt]{};
			\draw (4,2) node[right] {$K_{21}$} -- (4,2);
			
			\node at (1,1)[circle,fill,inner sep=1.5pt]{};
			\draw (1,1) node[right] {$K_{12}$} -- (1,1);
			\node at (2,1)[circle,fill,inner sep=1.5pt]{};
			\draw (2,1) node[right] {$K_{22}$} -- (2,1);
			\node at (3,1)[circle,fill,inner sep=1.5pt]{};
			\draw (3,1) node[right] {$K_{32}$} -- (3,1);
			\node at (4,1)[circle,fill,inner sep=1.5pt]{};
			\draw (4,1) node[right] {$K_{42}$} -- (4,1);
			
			\node at (0.5,0.5)[circle,fill,inner sep=1.5pt]{};
			\node at (1,0.5)[circle,fill,inner sep=1.5pt]{};
			\node at (1.5,0.5)[circle,fill,inner sep=1.5pt]{};
			\node at (2,0.5)[circle,fill,inner sep=1.5pt]{};
			\node at (2.5,0.5)[circle,fill,inner sep=1.5pt]{};
			\node at (3,0.5)[circle,fill,inner sep=1.5pt]{};
			\node at (3.5,0.5)[circle,fill,inner sep=1.5pt]{};
			\node at (4,0.5)[circle,fill,inner sep=1.5pt]{};
			
			\node at (0.25,0.25)[circle,fill,inner sep=1.5pt]{};
			\node at (0.5,0.25)[circle,fill,inner sep=1.5pt]{};
			\node at (0.75,0.25)[circle,fill,inner sep=1.5pt]{};
			\node at (1,0.25)[circle,fill,inner sep=1.5pt]{};
			\node at (1.25,0.25)[circle,fill,inner sep=1.5pt]{};
			\node at (1.5,0.25)[circle,fill,inner sep=1.5pt]{};
			\node at (1.75,0.25)[circle,fill,inner sep=1.5pt]{};
			\node at (2,0.25)[circle,fill,inner sep=1.5pt]{};
			\node at (2.25,0.25)[circle,fill,inner sep=1.5pt]{};
			\node at (2.5,0.25)[circle,fill,inner sep=1.5pt]{};
			\node at (2.75,0.25)[circle,fill,inner sep=1.5pt]{};
			\node at (3,0.25)[circle,fill,inner sep=1.5pt]{};
			\node at (3.25,0.25)[circle,fill,inner sep=1.5pt]{};
			\node at (3.5,0.25)[circle,fill,inner sep=1.5pt]{};
			\node at (3.75,0.25)[circle,fill,inner sep=1.5pt]{};
			\node at (4,0.25)[circle,fill,inner sep=1.5pt]{};
			
			\node (A) at (0,0) {};
			\node (B) at (1,0) {};
			\node (C) at (2,0) {};
			\node (D) at (3,0) {};
			\node (E) at (4,0) {};
			
			\draw [thick,
			decoration={brace,mirror,raise=0.15cm},
			decorate] (A) -- (C)
			node [pos=0.5,anchor=north,yshift=-0.2cm] {$I_{11}$};
			\draw [thick,
			decoration={brace,mirror,raise=0.15cm},
			decorate] (C) -- (E)
			node [pos=0.5,anchor=north,yshift=-0.2cm] {$I_{21}$};
			
			\draw [thick,
			decoration={brace,mirror,raise=0.8cm},
			decorate] (A) -- (B)
			node [pos=0.5,anchor=north,yshift=-0.85cm] {$I_{12}$};
			\draw [thick,
			decoration={brace,mirror,raise=0.8cm},
			decorate] (B) -- (C)
			node [pos=0.5,anchor=north,yshift=-0.85cm] {$I_{22}$};
			\draw [thick,
			decoration={brace,mirror,raise=0.8cm},
			decorate] (C) -- (D)
			node [pos=0.5,anchor=north,yshift=-0.85cm] {$I_{32}$};
			\draw [thick,
			decoration={brace,mirror,raise=0.8cm},
			decorate] (D) -- (E)
			node [pos=0.5,anchor=north,yshift=-0.85cm] {$I_{42}$};
			
			\end{tikzpicture}
			\vspace{-1em}
			\caption{The space $\XXX$ from Example~\ref{E2} and Example~\ref{E3}.}
			\label{Pic1}
			\vspace{-1.5em}
		\end{figure}
	\end{center}
	
	We conclude this section with an example showing that $E(f) \subsetneq \bigcap_{n \in \NN} E(f_n)$ may happen for some $f$ and $f_n$ as in the first part of the proof of Proposition~\ref{P1} (cf.~Remark~\ref{rem1}).
	
	\begin{example} \label{E4}
		Let $\XXX$ be defined as follows (see Figure~\ref{Pic2}). Set $X \coloneqq I \cup K \subseteq \RR^2$, where
		\begin{align*}
			I & \coloneqq \{(u,0) : u \in (0,1]\}, \\
			K & \coloneqq \{ (i 2^{-2j}, 2^{-j}) : (i,j) \in \nabla \}.
		\end{align*}
		Here $\nabla \coloneqq \{(i,j) \in \NN^2 : i \leq 2^{2j}\} \subseteq \NN^2$. Let $\rho$ be the usual two-dimensional Euclidean distance function truncated to $X \times X$. Finally, set $\mu \coloneqq \mu_I + \mu_K$, where $\mu_I$ is the one-dimensional Lebesgue measure on $I$ and $\mu_K$ is a measure supported on $K$ such that
		\[
		\mu_K(\{ (i 2^{-2j}, 2^{-j}) \}) = 2^{-2j - r(i,j)}
		\]
		for every $(i,j) \in \nabla$, where $r(i,j) \coloneqq 2^j \lceil i/2^j \rceil  - i$. Observe that $\mu(X) < \infty$. 
		
		Let $\BBB$ be an uncentered basis defined as follows. Set $\BBB \coloneqq \BBB_I \cup \BBB_K \cup \BBB_{IK}$, where
		\begin{align*}
			\BBB_I & \coloneqq \{ I_{ij} : (i,j) \in \nabla \}, \\
			\BBB_K & \coloneqq \{ K_{ij} : (i,j) \in \nabla \}, \\
			\BBB_{IK} & \coloneqq \{ I_{ij} \cup K^*_{ij} : (i,j) \in \nabla \}.
		\end{align*}
		Here, for every $(i,j) \in \nabla$, we use the auxiliary sets
		\begin{align*}
			I_{ij} & \coloneqq \{(u,0) : u \in ((i-1)2^{-2j}, i2^{-2j}]\}, \\
			K_{ij} & \coloneqq \{(i 2^{-2j}, 2^{-j})\},\\
			K^*_{ij} & \coloneqq \{(k 2^{-2j}, 2^{-j}) : k \in \NN  \text{ such that } \lceil k/2^j \rceil = \lceil i/2^j \rceil \}.
		\end{align*}
		
		Consider $g = g_I + g_K$, where $g_I \coloneqq \frac{2}{3} \ind{I}$ and $g_K$ is a function supported on $K$ such that
		\[
		g_K((i 2^{-2j}, 2^{-j})) \coloneqq 2^{- r(i,j)}
		\]
		for every $(i,j) \in \nabla$. 
		Then $g \in L^p(\XXX)$ for any $p \in [1,\infty]$, and $\overline{g} = \underline{g} = g$ because, for every $x \in I$, approximating $x$ by sets of the form $I_{ij}$ or $I_{ij} \cup K_{ij}^*$ leads to the same limiting value 
		\[
		\lim_{j \to \infty} \frac{2^{-2j} \cdot 2/3}{2^{-2j}}
		= \frac{2}{3} 
		= \lim_{j \to \infty} \frac{2^{-2j} \cdot 2/3 + 2^{-2j}(1 + 1/4 + \cdots + 1/2^{2j})}{2^{-2j} + 2^{-2j}(1 + 1/2 + \cdots + 1/2^j)}.
		\]
		On the other hand, for every $n \in \NN$ and $g_n \coloneqq g \cdot \ind{F_n}$, where $F_n \coloneqq \{x \in X : |g(x)| \geq 2^{-n} \}$, we obtain $\overline{g_n}(x) = \underline{g_n}(x) = g_n(x)$ for every $x \in K$, while for every $x \in I$ we have 
		\[
		\underline{g_n}(x) = 
		\lim_{j \to \infty} \frac{2^{-2j} \cdot 2/3 + 2^{-2j}(1 + 1/4 + \cdots + 1/2^{2n})}{2^{-2j} + 2^{-2j}(1 + 1/2 + \cdots + 1/2^j)} < \frac{2}{3} = g_n(x) = \overline{g_n}(x),
		\]
		since approximating $x$ by sets of the form $I_{ij} \cup K_{ij}^*$ leads to the smaller value above. Thus, $E(g) = \emptyset$ but $E(g_n) = I$ for every $n \in \NN$ so that $E(g) \subsetneq \bigcap_{n \in \NN} E(g_n)$, as claimed. 
	\end{example}
	
	\begin{center}
		\begin{figure}[H]
			\begin{tikzpicture}
			[
			scale=1.75,
			axis/.style={very thick,->,>=stealth'},
			important line/.style={thick},
			dashed line/.style={dashed, thin}
			]
			
			\draw[axis] (-0.75,0) -- (4.75,0) node(xline)[right]
			{};
			\draw[axis] (0,-0.75) -- (0,2.75) node(yline)[above] {};
			
			\draw (4,-0.1) node[below] {$1$} -- (4,0.1);
			\draw (2,-0.1) node[below] {$\frac{1}{2}$} -- (2,0.1);
			\draw (-0.1,2) node[left] {$\frac{1}{2}$} -- (0.1,2);
			\draw (-0.1,1) node[left] {$\frac{1}{4}$} -- (0.1,1);
			\draw[important line, line width=1mm] (0,0) -- (4,0);
			
			\draw[dashed line, line width=0.1mm] (4,0) -- (4,2);
			\draw[dashed line, line width=0.1mm] (3,0) -- (3,2);
			\draw[dashed line, line width=0.1mm] (3,0) -- (4,2);
			\draw[dashed line, line width=0.1mm] (2,0) -- (3,2);
			
			\draw[dashed line, line width=0.1mm] (2,0) -- (2,2);
			\draw[dashed line, line width=0.1mm] (1,0) -- (1,2);
			\draw[dashed line, line width=0.1mm] (1,0) -- (2,2);
			\draw[dashed line, line width=0.1mm] (0,0) -- (1,2);
			
			\draw[dashed line, line width=0.1mm] (3,2) -- (4,2);
			\draw[dashed line, line width=0.1mm] (1,2) -- (2,2);
			
			\node at (1,2)[circle,fill,inner sep=1.25pt]{};
			\draw (1,2) node[above] {$K_{11}$} -- (1,2);
			\node at (2,2)[circle,fill,inner sep=1.5pt]{};
			\draw (2,2) node[above] {$K_{21}$} -- (2,2);
			\node at (3,2)[circle,fill,inner sep=1.25pt]{};
			\draw (3,2) node[above] {$K_{31}$} -- (3,2);
			\node at (4,2)[circle,fill,inner sep=1.5pt]{};
			\draw (4,2) node[above] {$K_{41}$} -- (4,2);
			
			\node at (0.25,1)[circle,fill,inner sep=0.75pt]{};
			\node at (0.5,1)[circle,fill,inner sep=1pt]{};
			\node at (0.75,1)[circle,fill,inner sep=1.25pt]{};
			\node at (1,1)[circle,fill,inner sep=1.5pt]{};
			\node at (1.25,1)[circle,fill,inner sep=0.75pt]{};
			\node at (1.5,1)[circle,fill,inner sep=1pt]{};
			\node at (1.75,1)[circle,fill,inner sep=1.25pt]{};
			\node at (2,1)[circle,fill,inner sep=1.5pt]{};
			\node at (2.25,1)[circle,fill,inner sep=0.75pt]{};
			\node at (2.5,1)[circle,fill,inner sep=1pt]{};
			\node at (2.75,1)[circle,fill,inner sep=1.25pt]{};
			\node at (3,1)[circle,fill,inner sep=1.5pt]{};
			\node at (3.25,1)[circle,fill,inner sep=0.75pt]{};
			\node at (3.5,1)[circle,fill,inner sep=1pt]{};
			\node at (3.75,1)[circle,fill,inner sep=1.25pt]{};
			\node at (4,1)[circle,fill,inner sep=1.5pt]{};
			
			\node at (0.0625,0.5)[circle,fill,inner sep=0.2pt]{};
			\node at (0.125,0.5)[circle,fill,inner sep=0.3pt]{};
			\node at (0.1875,0.5)[circle,fill,inner sep=0.4pt]{};
			\node at (0.25,0.5)[circle,fill,inner sep=0.5pt]{};
			\node at (0.3125,0.5)[circle,fill,inner sep=0.75pt]{};
			\node at (0.375,0.5)[circle,fill,inner sep=1pt]{};
			\node at (0.4375,0.5)[circle,fill,inner sep=1.25pt]{};
			\node at (0.5,0.5)[circle,fill,inner sep=1.5pt]{};
			\node at (0.5625,0.5)[circle,fill,inner sep=0.2pt]{};
			\node at (0.625,0.5)[circle,fill,inner sep=0.3pt]{};
			\node at (0.6875,0.5)[circle,fill,inner sep=0.4pt]{};
			\node at (0.75,0.5)[circle,fill,inner sep=0.5pt]{};
			\node at (0.8125,0.5)[circle,fill,inner sep=0.75pt]{};
			\node at (0.875,0.5)[circle,fill,inner sep=1pt]{};
			\node at (0.9375,0.5)[circle,fill,inner sep=1.25pt]{};
			\node at (1,0.5)[circle,fill,inner sep=1.5pt]{};
			
			\node at (1.0625,0.5)[circle,fill,inner sep=0.2pt]{};
			\node at (1.125,0.5)[circle,fill,inner sep=0.3pt]{};
			\node at (1.1875,0.5)[circle,fill,inner sep=0.4pt]{};
			\node at (1.25,0.5)[circle,fill,inner sep=0.5pt]{};
			\node at (1.3125,0.5)[circle,fill,inner sep=0.75pt]{};
			\node at (1.375,0.5)[circle,fill,inner sep=1pt]{};
			\node at (1.4375,0.5)[circle,fill,inner sep=1.25pt]{};
			\node at (1.5,0.5)[circle,fill,inner sep=1.5pt]{};
			\node at (1.5625,0.5)[circle,fill,inner sep=0.2pt]{};
			\node at (1.625,0.5)[circle,fill,inner sep=0.3pt]{};
			\node at (1.6875,0.5)[circle,fill,inner sep=0.4pt]{};
			\node at (1.75,0.5)[circle,fill,inner sep=0.5pt]{};
			\node at (1.8125,0.5)[circle,fill,inner sep=0.75pt]{};
			\node at (1.875,0.5)[circle,fill,inner sep=1pt]{};
			\node at (1.9375,0.5)[circle,fill,inner sep=1.25pt]{};
			\node at (2,0.5)[circle,fill,inner sep=1.5pt]{};
			
			\node at (2.0625,0.5)[circle,fill,inner sep=0.2pt]{};
			\node at (2.125,0.5)[circle,fill,inner sep=0.3pt]{};
			\node at (2.1875,0.5)[circle,fill,inner sep=0.4pt]{};
			\node at (2.25,0.5)[circle,fill,inner sep=0.5pt]{};
			\node at (2.3125,0.5)[circle,fill,inner sep=0.75pt]{};
			\node at (2.375,0.5)[circle,fill,inner sep=1pt]{};
			\node at (2.4375,0.5)[circle,fill,inner sep=1.25pt]{};
			\node at (2.5,0.5)[circle,fill,inner sep=1.5pt]{};
			\node at (2.5625,0.5)[circle,fill,inner sep=0.2pt]{};
			\node at (2.625,0.5)[circle,fill,inner sep=0.3pt]{};
			\node at (2.6875,0.5)[circle,fill,inner sep=0.4pt]{};
			\node at (2.75,0.5)[circle,fill,inner sep=0.5pt]{};
			\node at (2.8125,0.5)[circle,fill,inner sep=0.75pt]{};
			\node at (2.875,0.5)[circle,fill,inner sep=1pt]{};
			\node at (2.9375,0.5)[circle,fill,inner sep=1.25pt]{};
			\node at (3,0.5)[circle,fill,inner sep=1.5pt]{};
			
			\node at (3.0625,0.5)[circle,fill,inner sep=0.2pt]{};
			\node at (3.125,0.5)[circle,fill,inner sep=0.3pt]{};
			\node at (3.1875,0.5)[circle,fill,inner sep=0.4pt]{};
			\node at (3.25,0.5)[circle,fill,inner sep=0.5pt]{};
			\node at (3.3125,0.5)[circle,fill,inner sep=0.75pt]{};
			\node at (3.375,0.5)[circle,fill,inner sep=1pt]{};
			\node at (3.4375,0.5)[circle,fill,inner sep=1.25pt]{};
			\node at (3.5,0.5)[circle,fill,inner sep=1.5pt]{};
			\node at (3.5625,0.5)[circle,fill,inner sep=0.2pt]{};
			\node at (3.625,0.5)[circle,fill,inner sep=0.3pt]{};
			\node at (3.6875,0.5)[circle,fill,inner sep=0.4pt]{};
			\node at (3.75,0.5)[circle,fill,inner sep=0.5pt]{};
			\node at (3.8125,0.5)[circle,fill,inner sep=0.75pt]{};
			\node at (3.875,0.5)[circle,fill,inner sep=1pt]{};
			\node at (3.9375,0.5)[circle,fill,inner sep=1.25pt]{};
			\node at (4,0.5)[circle,fill,inner sep=1.5pt]{};
			
			\node (A) at (0,0) {};
			\node (B) at (1,0) {};
			\node (C) at (2,0) {};
			\node (D) at (3,0) {};
			\node (E) at (4,0) {};
			
			\draw [thick,
			decoration={brace,mirror,raise=0.15cm},
			decorate] (A) -- (B)
			node [pos=0.5,anchor=north,yshift=-0.2cm] {$I_{11}$};
			\draw [thick,
			decoration={brace,mirror,raise=0.15cm},
			decorate] (B) -- (C)
			node [pos=0.5,anchor=north,yshift=-0.2cm] {$I_{21}$};
			\draw [thick,
			decoration={brace,mirror,raise=0.15cm},
			decorate] (C) -- (D)
			node [pos=0.5,anchor=north,yshift=-0.2cm] {$I_{31}$};
			\draw [thick,
			decoration={brace,mirror,raise=0.15cm},
			decorate] (D) -- (E)
			node [pos=0.5,anchor=north,yshift=-0.2cm] {$I_{41}$};
			
			\node (F) at (1,2) {};
			\node (G) at (2,2) {};
			\node (H) at (3,2) {};
			\node (I) at (4,2) {};
			
			\draw [thick,
			decoration={brace,raise=0.6cm},
			decorate] (F) -- (G)
			node [pos=0.5,anchor=north,yshift=1.35cm] {$K^*_{11} = K^*_{21}$};
			
			\draw [thick,
			decoration={brace,raise=0.6cm},
			decorate] (H) -- (I)
			node [pos=0.5,anchor=north,yshift=1.35cm] {$K^*_{31} = K^*_{41}$};
			
			\end{tikzpicture}
			\vspace{-1em}
			\caption{The space $\XXX$ from Example~\ref{E4} (dot thickness $\leftrightsquigarrow$ size $2^{-r(i,j)}$).}
			\label{Pic2}
			\vspace{-1.5em}
		\end{figure}
	\end{center}
	
	\section{Maximal operators} \label{S3}
	
	For a collection $\SSS$ of sets $S \subseteq X$ such that $\mu(S) \in (0,\infty)$, one can define the corresponding \textit{maximal operator $\MMM_\SSS$} in a standard way. Namely, for every $p \in [1,\infty]$ and $f \in L^p(\XXX)$, we let
	\[
	\MMM_\SSS f(x) \coloneqq \sup_{x \in S \in \SSS} {\rm Avg}_{|f|}(S).
	\]
	Here we do not assume that $\SSS$ forms a differentiation basis; in particular, the supremum above may be taken over the empty set, in which case we put $\MMM_\SSS f(x) \coloneqq 0$. Note that, in general, $\MMM_\SSS f$ may not be measurable. Indeed, let us modify Example~\ref{E3} so that $\mu_K((i2^{-j}, 2^{-j}) \coloneqq 2^{-2j}$. Then $f \coloneqq \ind{K} \in L^p(\XXX)$ for every $p \in [1,\infty]$, while $\MMM_\BBB f = \ind{K \cup F}$ is not measurable. In order to avoid such issues, in this section we shall consider only maximal operators corresponding to countable collections $\SSS$.   
	
	For $p \in [1,\infty)$, we say that \textit{$\MMM_\SSS$ satisfies the weak-type $(p,p)$ inequality} if
	\begin{align} \label{weak-type}
		\lambda^p \cdot \mu(\{ x \in X : \MMM_\SSS f(x) > \lambda\} ) \leq C_p^p \|f\|^p_{L^p(\XXX)}   
	\end{align}
	holds for every $f \in L^p(\XXX)$ and $\lambda \in (0, \infty)$, where $C_p \in [0,\infty)$ is independent of $f$ and $\lambda$. We write $\|\MMM_\SSS\|_{L^p \to L^{p,\infty}}$ for the smallest constant $C_p \in [0, \infty]$ such that \eqref{weak-type} holds.
	
	The following result is well-known, but we show its proof for the sake of completeness.
	
	\begin{lemma} \label{L0}
		Let $\XXX$ be a metric measure space and fix $p \in [1,\infty)$. Assume that $L^p(\XXX)$ contains a dense subset consisting of continuous functions. If $\BBB$ is a countable uncentered differentiation basis in $\XXX$ and $\MMM_\BBB$ satisfies the weak-type $(p,p)$ inequality, then $p \in {\rm diff}(\BBB)$.  
	\end{lemma}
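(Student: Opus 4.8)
The plan is to run the classical approximation argument, in which $L^p$ functions are approximated by continuous ones (which $\BBB$ differentiates pointwise), with the remaining error controlled by the weak-type bound for $\MMM_\BBB$. Concretely, I would fix $f \in L^p(\XXX)$ and reduce the goal $\mu(E(f)) = 0$ to proving $\mu(E_\lambda(f)) = 0$ for every $\lambda \in (0,\infty)$, where $E_\lambda(f) \coloneqq \{x \in X : |\overline{f}(x) - f(x)| > \lambda \text{ or } |\underline{f}(x) - f(x)| > \lambda\}$; this suffices since $E(f) = \bigcup_{k \in \NN} E_{1/k}(f)$ (recall $f$ takes values in $\RR$, so $f(x)$ is finite). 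Because $\BBB$ is countable and uncentered, $\overline{f}$ and $\underline{f}$ are measurable (Subsection~\ref{S1.3}), so each $E_\lambda(f)$ is measurable; likewise, for any $h \in L^p(\XXX)$ and $t \in (0,\infty)$, the set $\{\MMM_\BBB h > t\} = \bigcup\{S \in \BBB : {\rm Avg}_{|h|}(S) > t\}$ is a countable union of measurable sets, hence measurable.

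Next I would fix $\lambda, \varepsilon \in (0,\infty)$ and, using the density hypothesis, choose a continuous $g \in L^p(\XXX)$ with $\|f - g\|_{L^p(\XXX)} < \varepsilon$; set $h \coloneqq f - g$ and work with this representative of $f$ (changing the representative changes $E_\lambda(f)$ only by a null set). The key observation is that $g$ is differentiated at every $x$ admitting a contracting sequence: if $(S_n)_{n \in \NN} \Rightarrow x$ then $S_n \subseteq B(x, r_n)$ with $r_n \to 0$, so continuity of $g$ at $x$ gives $|{\rm Avg}_g(S_n) - g(x)| \leq \sup_{B(x,r_n)} |g - g(x)| \to 0$. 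Hence, for such $x$, ${\rm Avg}_f(S_n) \to c$ if and only if ${\rm Avg}_h(S_n) \to c - g(x)$, which yields $\overline{f}(x) - f(x) = \overline{h}(x) - h(x)$ and $\underline{f}(x) - f(x) = \underline{h}(x) - h(x)$. Moreover, since $\BBB$ is uncentered we have $x \in S_n$, so $|{\rm Avg}_h(S_n)| \leq {\rm Avg}_{|h|}(S_n) \leq \MMM_\BBB h(x)$, and therefore $|\overline{h}(x)|, |\underline{h}(x)| \leq \MMM_\BBB h(x)$.

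Combining these facts, for $\mu$-almost every $x \in E_\lambda(f)$ (namely those with at least one contracting sequence) we get $\lambda < |\overline{h}(x) - h(x)| \leq \MMM_\BBB h(x) + |h(x)|$, or the same bound with $\underline{h}$ in place of $\overline{h}$. Thus, up to a set of measure zero, $E_\lambda(f) \subseteq \{x : \MMM_\BBB h(x) > \lambda/2\} \cup \{x : |h(x)| > \lambda/2\}$. Applying the weak-type $(p,p)$ inequality for $\MMM_\BBB$ to the first set and Chebyshev's inequality to the second, we obtain $\mu(E_\lambda(f)) \leq (2/\lambda)^p \big( \|\MMM_\BBB\|_{L^p \to L^{p,\infty}}^p + 1 \big) \|h\|_{L^p(\XXX)}^p < (2/\lambda)^p \big( \|\MMM_\BBB\|_{L^p \to L^{p,\infty}}^p + 1 \big) \varepsilon^p$. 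Since $\varepsilon$ was arbitrary, $\mu(E_\lambda(f)) = 0$, and letting $\lambda$ range over $\{1/k : k \in \NN\}$ gives $\mu(E(f)) = 0$, so $p \in {\rm diff}(\BBB)$.

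The argument is essentially routine, and the weak-type inequality enters only at the final step; the points that require genuine care — and the reason the countability hypothesis is imposed — are the measurability of $E_\lambda(f)$ and of $\MMM_\BBB h$, the need to treat $\overline{f}$ and $\underline{f}$ only $\mu$-almost everywhere (in order to discard the null set of points admitting no contracting sequence), and the observation that the identity $\overline{f} - f = \overline{h} - h$ is precisely the place where differentiation of continuous functions is used.
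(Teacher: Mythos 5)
Your proposal is correct and follows essentially the same route as the paper's proof: approximate $f$ in $L^p$ by a continuous function, note that contracting averages of the continuous part converge pointwise by continuity, and control the error term via $\MMM_\BBB$ together with the weak-type $(p,p)$ inequality and Chebyshev. The only cosmetic difference is that the paper takes a sequence $f_j$ with $\|f-f_j\|_{L^p(\XXX)}\leq 1/j$ and lets $j\to\infty$, whereas you fix a single approximant of error $\varepsilon$ and let $\varepsilon\to 0$; your additional remarks on measurability and on the identity $\overline{f}-f=\overline{h}-h$ are accurate elaborations of steps the paper leaves implicit.
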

	
	\begin{proof}
		Fix $f \in L^p(\XXX)$ and $\varepsilon \in (0, \infty)$. It suffices to show that
		\[
		E_\varepsilon \coloneqq \big\{ x \in X : |\overline{f}(x) - f(x)| > 2 \varepsilon \text{ or } |\underline{f}(x) - f(x)| > 2 \varepsilon \big\}
		\]
		has measure zero. Note that $\overline{f}$ and $\underline{f}$ are measurable by the assumptions imposed on $\BBB$.
		
		For every $j \in \NN$, let $f_j \in L^p(\XXX)$ be a continuous function such that $\|f - f_j\|_{L^p(\XXX)} \leq 1/j$. Assume that $(S_n)_{n \in \NN}$ contracts to some $x \in X$. We write
		\[
		{\rm Avg}_f(S_n) = {\rm Avg}_{f_j}(S_n) + {\rm Avg}_{f-f_j}(S_n)   
		\]
		and observe that
		$
		\lim_{n \to \infty} {\rm Avg}_{f_j}(S_n) = f_j(x)
		$
		because $f_j$ is continuous. On the other hand,
		\[
		\sup_{n \in \NN} |{\rm Avg}_{f-f_j}(S_n)| \leq \MMM_\BBB (f-f_j)(x).
		\]
		Thus, we conclude that $E_\varepsilon$ is contained in the set
		\[
		E_\varepsilon^* \coloneqq \big\{ x \in X : |(f-f_j)(x)| > \varepsilon \text{ or } \MMM_\BBB (f-f_j)(x) > \varepsilon \big\}.    
		\]
		By Chebyshev's inequality and the weak-type $(p,p)$ inequality \eqref{weak-type}, we have
		\[
		\mu(E_\varepsilon) \leq \mu(E_\varepsilon^*) \leq \varepsilon^{-p} (1+C_p^p) \| f - f_j \|_{L^p(\XXX)}^p \leq (\varepsilon j)^{-p} (1+C_p^p).
		\]
		Therefore, by letting $j \to \infty$, we obtain $\mu(E_\varepsilon) = 0$ and the proof is complete.  
	\end{proof}
	
	By Lemma~\ref{L0}, a possible way to show that $p \in {\rm diff}(\BBB)$ is to study the maximal operator $\MMM_\BBB$. The following result allows us to verify \eqref{weak-type} for certain bases introduced in Section~\ref{S4}. 
	
	\begin{proposition}\label{P2}
		Let $\XXX$ be a metric measure space and fix $p \in [1,\infty)$. For every $n \in \NN$, let $\SSS_n$ be a countable collection such that $\MMM_{\SSS_n}$ satisfies \eqref{weak-type} with a constant $C_p \in (0, \infty)$ independent of $n$. Assume that if $S_1 \in \SSS_{n_1}$ and $S_2 \in \SSS_{n_2}$ for distinct $n_1, n_2 \in \NN$, then 
		\[
		S_1 \subseteq S_2
		\quad \text{or} \quad
		S_2 \subseteq S_1
		\quad \text{or} \quad
		S_1 \cap S_2 = \emptyset. 
		\]
		Let $\SSS \coloneqq \bigcup_{n \in \NN} \SSS_n$. Then $\MMM_{\SSS}$ satisfies \eqref{weak-type} with the same constant $C_p$.   
	\end{proposition}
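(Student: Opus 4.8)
The plan is to reduce the weak-type $(p,p)$ inequality for $\MMM_\SSS$ to the corresponding inequality for arbitrary \emph{finite} subcollections, and then to use the trichotomy hypothesis to break the sets involved into pieces indexed by $n \in \NN$, on which the estimate localizes to a single $\SSS_n$.

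First I would fix $f \in L^p(\XXX)$ and $\lambda \in (0,\infty)$, noting that $\MMM_\SSS f$ is measurable since $\SSS$ is countable. With $\mathcal{G} \coloneqq \{S \in \SSS : {\rm Avg}_{|f|}(S) > \lambda\}$ one has $\{x \in X : \MMM_\SSS f(x) > \lambda\} = \bigcup_{S \in \mathcal{G}} S$, and $\mathcal{G}$ is countable; enumerating it and invoking continuity of $\mu$ from below, it suffices to show $\mu\big(\bigcup_{S \in \mathcal{F}} S\big) \le C_p^p \lambda^{-p}\|f\|_{L^p(\XXX)}^p$ for every finite $\mathcal{F} \subseteq \mathcal{G}$.

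Next, given such an $\mathcal{F}$, I would pass to the subcollection $\mathcal{F}^* \subseteq \mathcal{F}$ of sets that are maximal with respect to inclusion; by finiteness, $\bigcup_{S \in \mathcal{F}} S = \bigcup_{S \in \mathcal{F}^*} S$. For each $S \in \mathcal{F}^*$ I would select one index $n(S)$ with $S \in \SSS_{n(S)}$, and set $\mathcal{F}^*_n \coloneqq \{S \in \mathcal{F}^* : n(S) = n\}$ and $A_n \coloneqq \bigcup_{S \in \mathcal{F}^*_n} S$. The key claim is that the sets $A_n$ are pairwise disjoint: if $S_1 \in \mathcal{F}^*_{n_1}$ and $S_2 \in \mathcal{F}^*_{n_2}$ with $n_1 \neq n_2$, then $S_1 \neq S_2$ and they lie in distinct collections, so the hypothesis gives $S_1 \subseteq S_2$, $S_2 \subseteq S_1$, or $S_1 \cap S_2 = \emptyset$; maximality rules out the first two, hence $S_1 \cap S_2 = \emptyset$. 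Then I would localize by putting $f_n \coloneqq f \cdot \ind{A_n}$: disjointness gives $\sum_n \|f_n\|_{L^p(\XXX)}^p \le \|f\|_{L^p(\XXX)}^p$, while for $S \in \mathcal{F}^*_n$ we have $S \subseteq A_n$, so ${\rm Avg}_{|f_n|}(S) = {\rm Avg}_{|f|}(S) > \lambda$ and therefore $A_n \subseteq \{x : \MMM_{\SSS_n} f_n(x) > \lambda\}$. Applying the weak-type bound for $\MMM_{\SSS_n}$ and summing over the finitely many $n$ with $\mathcal{F}^*_n \neq \emptyset$,
\[
\mu\Big(\bigcup_{S \in \mathcal{F}} S\Big) = \sum_n \mu(A_n) \le C_p^p \lambda^{-p} \sum_n \|f_n\|_{L^p(\XXX)}^p \le C_p^p \lambda^{-p}\|f\|_{L^p(\XXX)}^p,
\]
which is the required bound.

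The main obstacle is to combine the estimates for the individual operators $\MMM_{\SSS_n}$ without losing a factor depending on the number of relevant levels: a naive union bound over $n$ would be worthless. This is exactly what the trichotomy hypothesis — together with the passage to maximal sets — provides, by making the level sets $A_n$ genuinely disjoint so that the localized pieces $f_n = f\cdot\ind{A_n}$ have $\ell^p$-summable norms and the constant $C_p$ is preserved. The remaining ingredients (measurability of $\MMM_\SSS f$, the description of its superlevel set, continuity of $\mu$ from below, and the finite-poset fact that every set in $\mathcal{F}$ lies inside a maximal one) are routine.
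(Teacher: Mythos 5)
Your proof is correct and follows essentially the same strategy as the paper's: reduce to a finite subfamily of sets with average exceeding $\lambda$, pass to the maximal elements, use the trichotomy hypothesis to conclude that the unions $A_n$ coming from distinct levels are disjoint, and localize $f$ to each $A_n$ so that the single-level weak-type bounds sum without loss. The only (cosmetic) difference is in the reduction step: you write the superlevel set directly as the countable union of sets in $\mathcal{G}$ and invoke continuity of $\mu$ from below, whereas the paper approximates each $E_{\lambda,n}$ by a finite union up to $\delta/2^n$ and then truncates in $n$.
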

	
	\begin{proof}
		Fix $f \in L^p(\XXX)$ and $\lambda \in (0,\infty)$.  For every $n \in \NN$, set 
		\begin{align*}
			E_\lambda & \coloneqq \{x \in X : \MMM_\SSS f > \lambda \}, \\
			E_{\lambda,n} & \coloneqq \{x \in X : \MMM_{\SSS_n} f > \lambda \}.
		\end{align*} 
		Clearly, $E_\lambda = \bigcup_{n \in \NN} E_{\lambda,n}$ so that $E_\lambda$ is measurable. Our goal is to prove the inequality 
		\[
		\lambda^p \cdot \mu(E_\lambda) \leq C_p^p \|f\|^p_{L^p(\XXX)}.
		\]
		
		Fix $\delta > 0$. For every $n \in \NN$, since $\SSS_n$ is countable and
		\[
		\lambda^p \cdot \mu(E_{\lambda,n}) \leq C_p^p \|f\|^p_{L^p(\XXX)} < \infty,
		\] 
		one can find a positive integer $I_n \in \NN$ and a finite family $\{S_{n,1}, \dots, S_{n,I_n}\} \subseteq \SSS_n$ such that 
		\[
		\mu(E_{\lambda,n}) \leq \mu\big(S_{n,1} \cup \cdots \cup S_{n,I_n} \big) + \delta / 2^n
		\]
		and ${\rm Avg}_{|f|}(S_{n,i}) > \lambda$ for every $i \in \{1, \dots, I_n\}$. Thus, it suffices to prove the inequality
		\begin{align} \label{max1}
			\lambda^p \cdot \mu\Big(\bigcup_{n=1}^{N} \bigcup_{i=1}^{I_n} S_{n,i} \Big) \leq C_p^p \|f\|^p_{L^p(\XXX)}   
		\end{align}
		for every $N \in \NN$.
		
		Fix $N \in \NN$. For every $n \in \{1,\dots,N\}$, we can choose an integer $I_n^* \in \{0,1,\dots, I_n\}$ and a subfamily 
		$\{S_{n,1}^*, \dots, S^*_{n,I^*_n}\} \subseteq \{S_{n,1}, \dots, S_{n,I_n}\}$ in such a way that the family
		\[
		\SSS^* \coloneqq \big\{ S^*_{n,i} : n \in \{1,\dots, N\}, \, i \in \{1,\dots, I_n^*\} \big\}
		\]
		consists of all maximal elements (with possible repetitions being removed) of the family
		\[
		\big\{ S_{n,i} : n \in \{1,\dots, N\}, \, i \in \{1,\dots, I_n\} \big\}
		\]
		ordered by the inclusion relation. For every $n \in \{1, \dots, N\}$, let $E_{\lambda,n}^* \coloneqq \bigcup_{i=1}^{I_n^*} S^*_{n,i}$. Then the sets $E_{\lambda,n}^*$ are disjoint and so 
		\begin{align} \label{max2}
			\mu \Big(\bigcup_{n=1}^{N} \bigcup_{i=1}^{I_n} S_{n,i} \Big) = \sum_{n=1}^N \mu(E_{\lambda,n}^*).
		\end{align}
		Indeed, for distinct $n_1, n_2 \in \{1,\dots, N\}$, if 
		$
		i_1 \in \{1, \dots, I_{n_1}^*\}
		$ and $
		i_2 \in \{1, \dots, I_{n_2}^*\},
		$
		then by maximality we cannot have neither $S^*_{n_1,i_1} \subseteq S^*_{n_2,i_2}$ nor $S^*_{n_2,i_2} \subseteq S^*_{n_1,i_1}$. Hence, we must have
		$S^*_{n_1,i_1} \cap S^*_{n_2,i_2} = \emptyset$ by our initial assumption on the collections $\SSS_{n_1}$ and $\SSS_{n_2}$.
		
		For every $n \in \{1,\dots,N\}$, set $f_n \coloneqq f \cdot \ind{E_{\lambda,n}^*}$. Since $E_{\lambda,n}^*$ are disjoint, we obtain
		\begin{align} \label{max3}
			\sum_{n=1}^N \|f_n\|^p_{L^p(\XXX)} \leq \| f \|^p_{L^p(\XXX)}.  
		\end{align}
		Moreover, for every $i \in \{1, \dots, I_n^*\}$, we have 
		$
		{\rm Avg}_{|f_n|}(S^*_{n,i}) = {\rm Avg}_{|f|}(S^*_{n,i}).
		$
		Hence,
		\[
		E_{\lambda,n}^* \subseteq \{x \in X : \MMM_{\SSS_n} f_n(x) > \lambda\}.
		\]
		By our initial assumption on $\MMM_{\SSS_n}$, this gives
		\begin{align} \label{max4}
			\lambda^p \cdot \mu(E_{\lambda,n}^*) \leq C_p^p \|f_n\|^p_{L^p(\XXX)}.
		\end{align}
		Combining \eqref{max2}--\eqref{max4}, we obtain \eqref{max1} and the proof is complete. 
	\end{proof}
	
	\section{Infinite-dimensional torus} \label{S4}
	
	In order to illustrate Theorem~\ref{T1} with suitable examples, we introduce a very specific class of differentiation bases in the infinite-dimensional torus $\tom$.
	
	By $\TT$ we mean the quotient space $\RR / \ZZ$ which is naturally interpreted as the interval $[0,1]$ with its endpoints identified.
	We define the infinite-dimensional torus $\TT^\omega  \coloneqq \TT \times \TT \times \cdots$ as the product of countably many copies of $\TT$ indexed by $\NN$. Thus, $\tom$ is a group with ${\bf 0} \coloneqq (0,0,\dots)$ being its identity. For every $d\in\mathbb{N}$, we write $\tom=\mathbb{T}^d\times\mathbb{T}^{d,\omega}$ so that $\mathbb{T}^{d,\omega}$ is a copy of $\tom$ referring to the coordinates with indices larger than $d$. 
	
	Note that $\tom$ with the usual product topology is compact and metrizable. Indeed, the former follows from Tychonoff's theorem, while the latter holds because the function
	\[
	\rho_{\tom} ((x_1, x_2, \dots),(y_1, y_2, \dots)) \coloneqq \sum_{d=1}^\infty \frac{\min\{|x_d-y_d|,1-|x_d-y_d|\}}{2^d}
	\]
	defined on $\tom \times \tom$ determines a metric which is compatible with the product topology.  
	
	The $\sigma$-algebra of Borel subsets of $\tom$ is generated by rectangles of the form $\prod_{d=1}^\infty I_d$, where each $I_d$ is an interval in $\TT$ and, for some $d_0 \in \NN$, we have $I_d=\TT$ if $d> d_0$. The normalized Haar measure ${\rm d} x$ on $\tom$ is the product of countably many copies of the normalized Lebesgue measures on $\TT$. For a Borel set $E\subseteq \tom$, we denote its measure by
	\[
	|E| \coloneqq \int_{\tom} \ind{E}(x) \, {\rm d}x.
	\] 
	As mentioned before, by using a standard procedure we can make the space complete. 
	
	The infinite-dimensional torus $\tom$ still retains some of the differentiation properties of the finite-dimensional tori. For example, every density basis in $\tom$ differentiates $L^\infty(\tom)$, (cf.~\cite[Chapter~III, Theorem~1.4]{dG75}), because Lusin's Theorem still holds true in $\tom$. Nevertheless, in the context of differentiation bases consisting of rectangles with sides parallel to the coordinate axes, the infinite-dimensional torus $\tom$ exhibits a broader variety of behaviors that are absent in the finite-dimensional case.
	
	Historically, the two most important differentiation bases in $\tom$ are the Rubio de Francia bases, classical $\mathcal{R}$ and dyadic $\mathcal{R}_0$. While $\mathcal{R}$ is the object originally devised by Jos\'e Luis Rubio de Francia, $\mathcal{R}_0$ appears in his work somewhat indirectly, as a special case of dyadic bases in the abstract setting of locally compact groups (see \cite{RdF78}). Below we briefly describe how $\mathcal{R}$ and $\mathcal{R}_0$ look like (for formal definitions, see \cite{FR20} or the doctoral dissertation \cite{Fe19}). 
	
	For every $m \in \NN$, let $R_m \coloneqq \{0, \frac{1}{m}, \dots, \frac{m-1}{m}\}$. We also need two auxiliary sequences:
	\begin{itemize}
		\item the sequence $(H_m)_{m \in \NN}$ consisting of finite subgroups $H_1 \subseteq H_2 \subseteq \cdots \subseteq \tom$ such that $\bigcup_{m\in \mathbb{N}}H_m$ is dense in $\tom$ and $[H_{m+1}:H_m]=2$ for every $m \in \NN$,
		\item the sequence $(V_m)_{m \in \NN}$ consisting of subsets $\tom \supseteq V_1 \supseteq V_2 \supseteq \cdots$ such that each $V_m$ is the interior of the fundamental domain of the quotient group $\tom / H_m$.  
	\end{itemize} 
	In Table~\ref{Tab} below, we list the first few objects. By ${\bf 0}^{d, \omega}$ we mean the zero vector in $\TT^{d, \omega}$.
	
	\begin{table}[H]	
		\begin{center}
			\caption{Objects used to define the Rubio de Francia bases.}
			\scalebox{1.0}{
				\begin{tabular}{ r l l }
					\hline
					$m$ & $H_m$ & $V_m$ \\
					\hline
					$1$ & $R_2 \times \{{\bf 0}^{1,\omega}\}$
					& $(0,1/2) \times \TT^{1, \omega}$ \\ 
					$2$ & $R_2 \times R_2 \times \{{\bf 0}^{2,\omega}\}$ 
					& $(0,1/2) \times (0,1/2) \times \TT^{2, \omega}$ \\ 
					$3$ & $R_4 \times R_2 \times \{{\bf 0}^{2,\omega}\}$ & $(0,1/4) \times (0,1/2) \times \TT^{2, \omega}$ \\
					$4$ & $R_4 \times R_4 \times \{{\bf 0}^{2,\omega}\}$ & $(0,1/4) \times (0,1/4) \times \TT^{2, \omega}$ \\ 
					$5$ & $R_4 \times R_4 \times R_2 \times \{{\bf 0}^{3,\omega}\}$ & $(0,1/4) \times (0,1/4) \times (0,1/2) \times\TT^{3, \omega}$ \\ 
					$6$ & $R_4 \times R_4 \times R_4 \times \{{\bf 0}^{3,\omega}\}$ & $(0,1/4) \times (0,1/4) \times (0,1/4) \times\TT^{3, \omega}$ \\
					$7$ & $R_8 \times R_4 \times R_4 \times \{{\bf 0}^{3,\omega}\}$ & $(0,1/8) \times (0,1/4) \times (0,1/4) \times\TT^{3, \omega}$ \\ 
					$8$ & $R_8 \times R_8 \times R_4 \times \{{\bf 0}^{3,\omega}\}$ & $(0,1/8) \times (0,1/8) \times (0,1/4) \times\TT^{3, \omega}$ \\ 
					$9$ & $R_8 \times R_8 \times R_8 \times \{{\bf 0}^{3,\omega}\}$ & $(0,1/8) \times (0,1/8) \times (0,1/8) \times\TT^{3, \omega}$ \\
					$10$ & $R_8 \times R_8 \times R_8 \times R_2 \times \{{\bf 0}^{4,\omega}\}$ & $(0,1/8) \times (0,1/8) \times (0,1/8) \times (0,1/2) \times\TT^{4, \omega}$ \\
					$\cdots$ & $\cdots$ & $\cdots$ \\
					\hline
			\end{tabular}}
			\label{Tab}
		\end{center}
	\end{table} 
	
	According to this notation, the Rubio de Francia bases are defined by
	\begin{align*}
		\RRR_0  & \coloneqq \{ t + V_m : m \in \NN, \, t \in H_m \}, \\
		\RRR  & \coloneqq \{ t + V_m : m \in \NN, \, t \in \TT^\omega \}.
	\end{align*}
	In Figure~\ref{Pic3} below, we also show the first eight sets $V_m$ (or, to be absolutely precise, the shapes of their projections onto the first three coordinates of $\tom$).
	
	\begin{center}
		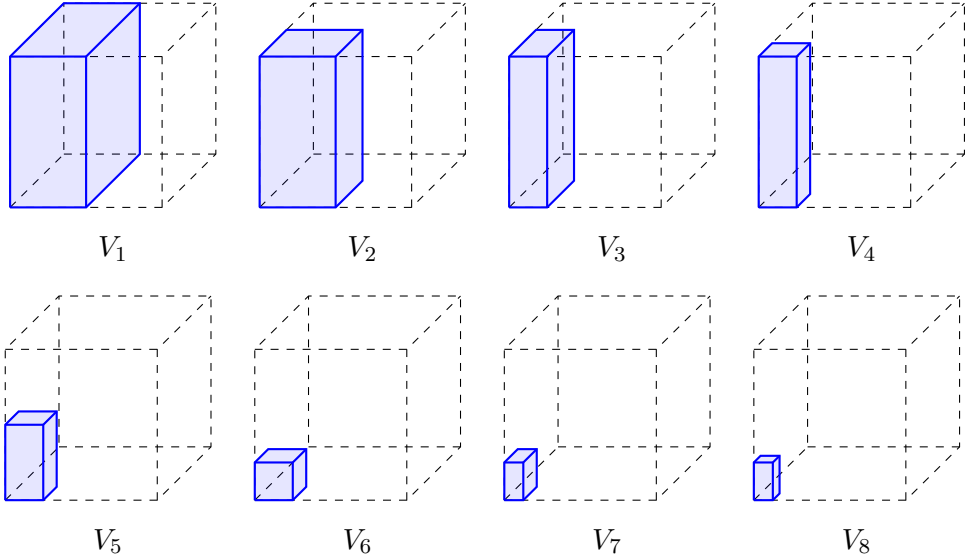
\begin{figure}[H]
			\begin{tikzpicture}
			[scale=1]
			
			\draw[dashed] (0,0) -- (2,0) -- (2+0.5*1.414,0.5*1.414) -- (2+0.5*1.414,2+0.5*1.414) -- (0.5*1.414,2+0.5*1.414) -- (0,2) -- (0,0);
			\draw[dashed] (2,0) -- (2,2) -- (2+0.5*1.414,2+0.5*1.414); 
			\draw[dashed] (2,2) -- (0,2);
			\draw[dashed] (0,0) -- (0.5*1.414,0.5*1.414) -- (0.5*1.414,2+0.5*1.414); 
			\draw[dashed] (0.5*1.414,0.5*1.414) -- (2+0.5*1.414,0.5*1.414);
			
			\draw[blue,thick,fill=blue,fill opacity=0.1] (0,0) -- (1,0) -- (1+0.5*1.414,0.5*1.414) -- (1+0.5*1.414,2+0.5*1.414) -- (0.5*1.414,2+0.5*1.414) -- (0,2) -- (0,0);
			\draw[blue,thick] (1,0) -- (1,2) -- (1+0.5*1.414,2+0.5*1.414); 
			\draw[blue,thick] (1,2) -- (0,2);
			
			\node at (1+0.25*1.414,-0.25)[below]{$V_1$};
			\end{tikzpicture}
			\hspace{3mm}
			\begin{tikzpicture}
			[scale=1]
			
			\draw[dashed] (0,0) -- (2,0) -- (2+0.5*1.414,0.5*1.414) -- (2+0.5*1.414,2+0.5*1.414) -- (0.5*1.414,2+0.5*1.414) -- (0,2) -- (0,0);
			\draw[dashed] (2,0) -- (2,2) -- (2+0.5*1.414,2+0.5*1.414); 
			\draw[dashed] (2,2) -- (0,2);
			\draw[dashed] (0,0) -- (0.5*1.414,0.5*1.414) -- (0.5*1.414,2+0.5*1.414); 
			\draw[dashed] (0.5*1.414,0.5*1.414) -- (2+0.5*1.414,0.5*1.414);
			
			\draw[blue,thick,fill=blue,fill opacity=0.1] (0,0) -- (1,0) -- (1+0.25*1.414,0.25*1.414) -- (1+0.25*1.414,2+0.25*1.414) -- (0.25*1.414,2+0.25*1.414) -- (0,2) -- (0,0);
			\draw[blue,thick] (1,0) -- (1,2) -- (1+0.25*1.414,2+0.25*1.414); 
			\draw[blue,thick] (1,2) -- (0,2);
			
			\node at (1+0.25*1.414,-0.25)[below]{$V_2$};
			\end{tikzpicture}
			\hspace{3mm}
			\begin{tikzpicture}
			[scale=1]
			
			\draw[dashed] (0,0) -- (2,0) -- (2+0.5*1.414,0.5*1.414) -- (2+0.5*1.414,2+0.5*1.414) -- (0.5*1.414,2+0.5*1.414) -- (0,2) -- (0,0);
			\draw[dashed] (2,0) -- (2,2) -- (2+0.5*1.414,2+0.5*1.414); 
			\draw[dashed] (2,2) -- (0,2);
			\draw[dashed] (0,0) -- (0.5*1.414,0.5*1.414) -- (0.5*1.414,2+0.5*1.414); 
			\draw[dashed] (0.5*1.414,0.5*1.414) -- (2+0.5*1.414,0.5*1.414);
			
			\draw[blue,thick,fill=blue,fill opacity=0.1] (0,0) -- (0.5,0) -- (0.5+0.25*1.414,0.25*1.414) -- (0.5+0.25*1.414,2+0.25*1.414) -- (0.25*1.414,2+0.25*1.414) -- (0,2) -- (0,0);
			\draw[blue,thick] (0.5,0) -- (0.5,2) -- (0.5+0.25*1.414,2+0.25*1.414); 
			\draw[blue,thick] (0.5,2) -- (0,2);
			
			\node at (1+0.25*1.414,-0.25)[below]{$V_3$};
			\end{tikzpicture}
			\hspace{3mm}
			\begin{tikzpicture}
			[scale=1]
			
			\draw[dashed] (0,0) -- (2,0) -- (2+0.5*1.414,0.5*1.414) -- (2+0.5*1.414,2+0.5*1.414) -- (0.5*1.414,2+0.5*1.414) -- (0,2) -- (0,0);
			\draw[dashed] (2,0) -- (2,2) -- (2+0.5*1.414,2+0.5*1.414); 
			\draw[dashed] (2,2) -- (0,2);
			\draw[dashed] (0,0) -- (0.5*1.414,0.5*1.414) -- (0.5*1.414,2+0.5*1.414); 
			\draw[dashed] (0.5*1.414,0.5*1.414) -- (2+0.5*1.414,0.5*1.414);
			
			\draw[blue,thick,fill=blue,fill opacity=0.1] (0,0) -- (0.5,0) -- (0.5+0.125*1.414,0.125*1.414) -- (0.5+0.125*1.414,2+0.125*1.414) -- (0.125*1.414,2+0.125*1.414) -- (0,2) -- (0,0);
			\draw[blue,thick] (0.5,0) -- (0.5,2) -- (0.5+0.125*1.414,2+0.125*1.414); 
			\draw[blue,thick] (0.5,2) -- (0,2);
			
			\node at (1+0.25*1.414,-0.25)[below]{$V_4$};
			\end{tikzpicture}
			
			\vspace{3mm}
			
			\begin{tikzpicture}
			[scale=1]
			
			\draw[dashed] (0,0) -- (2,0) -- (2+0.5*1.414,0.5*1.414) -- (2+0.5*1.414,2+0.5*1.414) -- (0.5*1.414,2+0.5*1.414) -- (0,2) -- (0,0);
			\draw[dashed] (2,0) -- (2,2) -- (2+0.5*1.414,2+0.5*1.414); 
			\draw[dashed] (2,2) -- (0,2);
			\draw[dashed] (0,0) -- (0.5*1.414,0.5*1.414) -- (0.5*1.414,2+0.5*1.414); 
			\draw[dashed] (0.5*1.414,0.5*1.414) -- (2+0.5*1.414,0.5*1.414);
			
			\draw[blue,thick,fill=blue,fill opacity=0.1] (0,0) -- (0.5,0) -- (0.5+0.125*1.414,0.125*1.414) -- (0.5+0.125*1.414,1+0.125*1.414) -- (0.125*1.414,1+0.125*1.414) -- (0,1) -- (0,0);
			\draw[blue,thick] (0.5,0) -- (0.5,1) -- (0.5+0.125*1.414,1+0.125*1.414); 
			\draw[blue,thick] (0.5,1) -- (0,1);
			
			\node at (1+0.25*1.414,-0.25)[below]{$V_5$};
			\end{tikzpicture}
			\hspace{3mm}
			\begin{tikzpicture}
			[scale=1]
			
			\draw[dashed] (0,0) -- (2,0) -- (2+0.5*1.414,0.5*1.414) -- (2+0.5*1.414,2+0.5*1.414) -- (0.5*1.414,2+0.5*1.414) -- (0,2) -- (0,0);
			\draw[dashed] (2,0) -- (2,2) -- (2+0.5*1.414,2+0.5*1.414); 
			\draw[dashed] (2,2) -- (0,2);
			\draw[dashed] (0,0) -- (0.5*1.414,0.5*1.414) -- (0.5*1.414,2+0.5*1.414); 
			\draw[dashed] (0.5*1.414,0.5*1.414) -- (2+0.5*1.414,0.5*1.414);
			
			\draw[blue,thick,fill=blue,fill opacity=0.1] (0,0) -- (0.5,0) -- (0.5+0.125*1.414,0.125*1.414) -- (0.5+0.125*1.414,0.5+0.125*1.414) -- (0.125*1.414,0.5+0.125*1.414) -- (0,0.5) -- (0,0);
			\draw[blue,thick] (0.5,0) -- (0.5,0.5) -- (0.5+0.125*1.414,0.5+0.125*1.414); 
			\draw[blue,thick] (0.5,0.5) -- (0,0.5);
			
			\node at (1+0.25*1.414,-0.25)[below]{$V_6$};
			\end{tikzpicture}
			\hspace{3mm}
			\begin{tikzpicture}
			[scale=1]
			
			\draw[dashed] (0,0) -- (2,0) -- (2+0.5*1.414,0.5*1.414) -- (2+0.5*1.414,2+0.5*1.414) -- (0.5*1.414,2+0.5*1.414) -- (0,2) -- (0,0);
			\draw[dashed] (2,0) -- (2,2) -- (2+0.5*1.414,2+0.5*1.414); 
			\draw[dashed] (2,2) -- (0,2);
			\draw[dashed] (0,0) -- (0.5*1.414,0.5*1.414) -- (0.5*1.414,2+0.5*1.414); 
			\draw[dashed] (0.5*1.414,0.5*1.414) -- (2+0.5*1.414,0.5*1.414);
			
			\draw[blue,thick,fill=blue,fill opacity=0.1] (0,0) -- (0.25,0) -- (0.25+0.125*1.414,0.125*1.414) -- (0.25+0.125*1.414,0.5+0.125*1.414) -- (0.125*1.414,0.5+0.125*1.414) -- (0,0.5) -- (0,0);
			\draw[blue,thick] (0.25,0) -- (0.25,0.5) -- (0.25+0.125*1.414,0.5+0.125*1.414); 
			\draw[blue,thick] (0.25,0.5) -- (0,0.5);
			
			\node at (1+0.25*1.414,-0.25)[below]{$V_7$};
			\end{tikzpicture}
			\hspace{3mm}
			\begin{tikzpicture}
			[scale=1]
			
			\draw[dashed] (0,0) -- (2,0) -- (2+0.5*1.414,0.5*1.414) -- (2+0.5*1.414,2+0.5*1.414) -- (0.5*1.414,2+0.5*1.414) -- (0,2) -- (0,0);
			\draw[dashed] (2,0) -- (2,2) -- (2+0.5*1.414,2+0.5*1.414); 
			\draw[dashed] (2,2) -- (0,2);
			\draw[dashed] (0,0) -- (0.5*1.414,0.5*1.414) -- (0.5*1.414,2+0.5*1.414); 
			\draw[dashed] (0.5*1.414,0.5*1.414) -- (2+0.5*1.414,0.5*1.414);
			
			\draw[blue,thick,fill=blue,fill opacity=0.1] (0,0) -- (0.25,0) -- (0.25+0.0625*1.414,0.0625*1.414) -- (0.25+0.0625*1.414,0.5+0.0625*1.414) -- (0.0625*1.414,0.5+0.0625*1.414) -- (0,0.5) -- (0,0);
			\draw[blue,thick] (0.25,0) -- (0.25,0.5) -- (0.25+0.0625*1.414,0.5+0.0625*1.414); 
			\draw[blue,thick] (0.25,0.5) -- (0,0.5);
			
			\node at (1+0.25*1.414,-0.25)[below]{$V_8$};
			\end{tikzpicture}
			\vspace{-1em}
			\caption{The first eight elements $V_m$ in $\TT^{3} \times \TT^{3, \omega}$.}
			\label{Pic3}
			\vspace{-1.5em}
		\end{figure}
	\end{center}
	
	Both $\RRR_0$ and $\RRR$ are considered as uncentered bases. Interestingly, ${\rm diff}(\RRR_0) = [1,\infty]$ (see \cite[Corollary~16]{FR20}), while ${\rm diff}(\RRR) = \emptyset$ (see \cite[Theorem~1.1]{Ko21}). Although $\RRR$ is uncountable, we can conveniently ignore this fact, referring only to its differentiation properties \cite[Theorem~1.1]{Ko21} and not to any results concerning the maximal operator $\MMM_\RRR$. However, we recall that $\RRR$ can be replaced by a countable collection $\RRR' \subseteq \RRR$ such that $\MMM_\RRR f = \MMM_{\RRR'} f$ for every $f \in L^1(\tom)$ (see \cite[Lemma~3.2]{KMPRR23}).
	
	In \cite{KMPRR23, KRR23}, the authors studied various properties of the maximal operators $\MMM_{\RRR'}$ corresponding to some specific intermediate uncentered bases $\RRR_0 \subseteq \RRR' \subseteq \RRR$. In particular, their considerations led the to define the following object. 
	
	\begin{definition}\label{epsn}
		Fix $\varepsilon \in (0, \frac{1}{2}]$ and $d \in \mathbb{N}$. 
		Let $Q_0 \in \mathcal{R}_0$ be such that $|Q_0| \leq 2^{-(d-1)^2-1}$ and so $Q_0$ has at least $d$ nonfree coordinates (cf.~\cite[Subsection~2.3]{KMPRR23}). For every $i \in \{1, \dots, d\}$, let $Q_{i}$ be $Q_0$ translated to the right with respect to the $i$-th coordinate in such a way that $| Q_{i} \cap Q_0| = \varepsilon |Q_0|$. Then the family $\SSS_0 \coloneqq \{Q_0, Q_{1}, \dots, Q_{d}\}$ is called an \emph{$(\varepsilon, d)$-configuration around $Q_0$} (see Figure~\ref{Pic4.1}).
	\end{definition}
	
	\begin{center}
		\begin{figure}[H]
			\begin{tikzpicture}
			[scale=0.6]
			
			\draw[thick] (0,0) rectangle (8,8);
			\config{0}{8}{0}{8}{0.25}{blue}
			
			\node at (0.75,0.75)[color=blue]{\large$Q_0$};
			\node at (6.25,0.75)[color=blue]{\large$Q_1$};
			\node at (0.75,6.25)[color=blue]{\large$Q_2$};
			
			\end{tikzpicture}
			\vspace{-1em}
			\caption{The $(\frac{1}{4},2)$-configuration around $(0,\frac{1}{2})^2 \times \TT^{2,\omega}$.}
			\label{Pic4.1}
			\vspace{-1.5em}
		\end{figure}
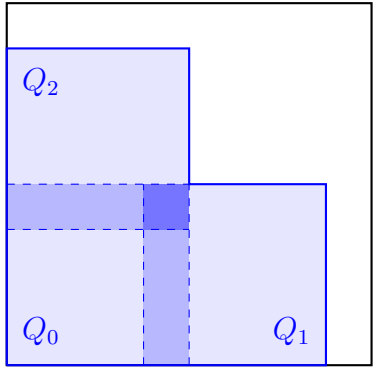
	\end{center}
	
	Let us recall \cite[Theorem~1.2]{KRR23}. Here, we write $A \simeq B$ if there exists a constant $C \in [1,\infty)$, independent of all parameters involved, such that $C^{-1}B \leq A \leq CB$. 
	
	\begin{theorem} \label{T2}
		Fix $\varepsilon \in (0, \frac{1}{2}]$ and $d \in \mathbb{N}$. Let $\mathcal{S}_0$ be an $(\varepsilon, d)$-configuration around $Q_0 \in \mathcal{R}_0$. For every $p \in (1, \infty)$, define $A_p(\mathcal{S}_0) \coloneqq \varepsilon d^{1/p}$ and $p_* \coloneqq \frac{p}{p-1}$. Then we have
		\[
		\|\mathcal{M}_{\mathcal{S}_0}\|_{L^p \to L^{p,\infty}} \simeq
		\begin{cases}
		1 & \text{if } A_p(\mathcal S_0) \in (0,p_* e^{-p_*}], \\
		p_* \log^{-1}(p_* \slash A_p(\mathcal S_0)) & \text{if } A_p(\mathcal S_0) \in [p_* e^{-p_*}, p_* e^{-1}], \\
		e A_p(\mathcal S_0) & \text{if } A_p(\mathcal S_0) \in [p_* e^{-1}, \infty).
		\end{cases} 
		\]
	\end{theorem}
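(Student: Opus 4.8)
\medskip

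\noindent\emph{Proof strategy.} The plan is to reduce the weak-type norm to a finite extremal problem governed by the combinatorics of $\SSS_0$, to solve that problem up to universal constants using Hölder's inequality and its equality case, and then to evaluate the resulting quantity---which is essentially a $p_*$-th moment of a binomial distribution---in terms of $A_p := A_p(\SSS_0) = \varepsilon d^{1/p}$ and $p_*$. For the first step, since $\{\MMM_{\SSS_0}f > \lambda\}$ is the union of those $Q_i$ on which ${\rm Avg}_{|f|}$ exceeds $\lambda$, homogeneity gives
\[
\|\MMM_{\SSS_0}\|_{L^p\to L^{p,\infty}}^p = \sup\Big\{\tfrac{|\bigcup_{i\in\mathcal G}Q_i|}{\|f\|_{L^p(\tom)}^p} : f\ge 0,\ \emptyset\ne\mathcal G\subseteq\{0,1,\dots,d\},\ {\rm Avg}_f(Q_i)\ge 1\ \forall i\in\mathcal G\Big\}.
\]
From the product structure one reads off that the tails $T_i := Q_i\setminus Q_0$ ($1\le i\le d$) are pairwise disjoint and disjoint from $Q_0$, with $|T_i| = (1-\varepsilon)|Q_0|$, while $Q_{i_1}\cap\cdots\cap Q_{i_m}\subseteq Q_0$ has measure $\varepsilon^m|Q_0|$ for distinct indices $\ge 1$. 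In particular $|\bigcup_{i\in\mathcal G}Q_i|\le(k+1)|Q_0|$ with $k := |\mathcal G\cap\{1,\dots,d\}|$, and by the symmetry of the configuration only $k$ matters, so one may take $\mathcal G\cap\{1,\dots,d\}=\{1,\dots,k\}$.

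\smallskip

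\noindent\emph{Step 2 (the extremal problem for fixed $k$).} For $x\in Q_0$ set $N_k(x) := \#\{1\le i\le k : x\in Q_i\}$, so $\int_{Q_0}N_k^{p_*} = |Q_0|\,m_{p_*}(k)$, where $m_{p_*}(k) := \sum_j\binom kj\varepsilon^j(1-\varepsilon)^{k-j}j^{p_*}$ is the $p_*$-th moment of $\mathrm{Bin}(k,\varepsilon)$. Split $f = f\ind{Q_0} + \sum_{i=1}^k f\ind{T_i} + (\text{rest})$; the pieces have disjoint supports. Summing the constraints over $1\le i\le k$ and using $\sum_i\ind{Q_i\cap Q_0} = N_k$ on $Q_0$, one gets $k|Q_0|\le\int_{Q_0}(f\ind{Q_0})N_k + \sum_i\int_{T_i}(f\ind{T_i})$; applying Hölder with exponents $p,p_*$ on $Q_0$ (to the first term) and on the disjoint tails (to the second, also using Hölder over the $k$ indices), this becomes $k|Q_0|\le\|f\ind{Q_0}\|_{L^p}(|Q_0|m_{p_*}(k))^{1/p_*} + (\sum_i\|f\ind{T_i}\|_{L^p}^p)^{1/p}(k|Q_0|)^{1/p_*}$. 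One more application of Hölder with exponents $p,p_*$ to this two-term sum, together with $\|f\ind{Q_0}\|_{L^p}^p + \sum_i\|f\ind{T_i}\|_{L^p}^p\le\|f\|_{L^p}^p$, forces $\|f\|_{L^p}^p\ge k^p|Q_0|(m_{p_*}(k)+k)^{1-p}$. Combined with $|\bigcup_{\mathcal G}Q_i|\le(k+1)|Q_0|$ and $m_{p_*}(k)\ge k\varepsilon$, this yields $\|\MMM_{\SSS_0}\|_{L^p\to L^{p,\infty}}^p\lesssim\max_{0\le k\le d}\max\big(1,(m_{p_*}(k)/k)^{p-1}\big)$. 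The matching lower bound uses the equality case of the Hölder step: for each $k$ take $f = c\,N_k^{p_*-1}\ind{Q_0}$ with $c$ normalised so that ${\rm Avg}_f(Q_i)=1$ for $1\le i\le k$; then $\|f\|_{L^p}^p = k^p|Q_0|\,m_{p_*}(k)^{1-p}$ while $|\bigcup_{i=1}^k Q_i|\ge\tfrac12 k|Q_0|$ (using $\varepsilon\le\tfrac12$ and disjointness of the $T_i$), so the weak-type norm is $\gtrsim\max\big(1,(m_{p_*}(k)/k)^{1/p_*}\big)$ for every $k$, the constant $1$ coming from a single set $Q_0$.

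\smallskip

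\noindent\emph{Step 3 (monotonicity and collapse to $k=d$).} I would next check that $k\mapsto m_{p_*}(k)/k$ is non-decreasing: by the size-biasing identity for the binomial, $m_{p_*}(k)/k = \varepsilon\,\mathbb E\big[(1+\mathrm{Bin}(k-1,\varepsilon))^{p_*-1}\big]$, and the elementary inequality $(y+1)^{p_*}-y^{p_*}\ge(y+1)^{p_*-1}$ for $y\ge 0$ together with stochastic monotonicity of $\mathrm{Bin}(k,\varepsilon)$ in $k$ gives $(k-1)m_{p_*}(k)\ge k\,m_{p_*}(k-1)$. Hence the maximum over $k$ is attained at $k=d$, and Steps~1--2 collapse to
\[
\|\MMM_{\SSS_0}\|_{L^p\to L^{p,\infty}}\simeq\max\big(1,M^{1/p_*}\big),\qquad M := \varepsilon\,\mathbb E\big[(1+\mathrm{Bin}(d-1,\varepsilon))^{p_*-1}\big].
\]

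\smallskip

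\noindent\emph{Step 4 (the moment estimate), and the main obstacle.} It remains to evaluate $M$ in terms of $A_p = \varepsilon d^{1/p}$ and $p_*$. Writing $M = \varepsilon\sum_{j\ge 0}\binom{d-1}{j}\varepsilon^j(1-\varepsilon)^{d-1-j}(1+j)^{p_*-1}$ and locating the peak index $j^*$ of the summand, Stirling's formula reduces the question to maximising $j\mapsto(p_*-1)\log(1+j) + j\log(ed\varepsilon/j)$ over $1\le j\le d$; the location and value of the maximiser produce exactly the thresholds $p_*e^{-p_*}$, $p_*e^{-1}$ and the logarithmic profile. When $A_p\lesssim p_*e^{-p_*}$ the peak sits at bounded $j^*$ and $M\lesssim 1$, so $\max(1,M^{1/p_*})\simeq 1$; when $A_p\gtrsim p_*e^{-1}$ the peak reaches $j^*\simeq d$ and $M\simeq(eA_p)^{p_*}$, so $\max(1,M^{1/p_*})\simeq eA_p$; and in the intermediate range $M\simeq\big(p_*/\log(p_*/A_p)\big)^{p_*}$. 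A useful consistency check is that the three profiles $1$, $p_*\log^{-1}(p_*/A_p)$, $eA_p$ agree up to constants at $A_p = p_*e^{-p_*}$ and $A_p = p_*e^{-1}$, which they do. \textbf{The main obstacle is precisely this last step}: everything before the collapse in Step~3 is soft, but extracting the sharp logarithmic profile $p_*\log^{-1}(p_*/A_p)$ and the explicit constants (the $e$'s) demands a Laplace/stationary-phase analysis of the binomial moment $\mathbb E\big[(1+\mathrm{Bin}(d-1,\varepsilon))^{p_*-1}\big]$ that is uniform over $\varepsilon\in(0,\tfrac12]$, $d\in\NN$ and $p\in(1,\infty)$---one must control this moment when $d\varepsilon$ is small, when it is large, and, most delicately, in the window where $d\varepsilon$ is comparable to $p_*$ and it is the outer $\max(1,\cdot)$ that pins down the answer.
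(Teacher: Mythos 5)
First, a remark on context: the paper does not prove Theorem~\ref{T2} at all --- it is recalled verbatim from \cite[Theorem~1.2]{KRR23} --- so there is no in-paper argument to measure your proposal against, and the comparison below is against the statement itself.

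Your Steps~1--3 are sound, and the key computations check out: the tails $T_i = Q_i\setminus Q_0$ are pairwise disjoint and disjoint from $Q_0$; the indicators of $Q_i\cap Q_0$ are independent Bernoulli$(\varepsilon)$ variables under the uniform measure on $Q_0$ (so $N_k\sim\mathrm{Bin}(k,\varepsilon)$); the two-stage H\"older argument does give $\|f\|_{L^p}^p\ge k^p|Q_0|\,(m_{p_*}(k)+k)^{1-p}$; the extremiser $f=c\,N_k^{p_*-1}\ind{Q_0}$ matches it up to absolute constants; and the size-biasing identity $m_{p_*}(k)/k=\varepsilon\,\mathbb{E}\big[(1+\mathrm{Bin}(k-1,\varepsilon))^{p_*-1}\big]$ together with stochastic monotonicity in $k$ legitimately collapses the extremal problem to $k=d$. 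You have therefore correctly reduced the theorem to the single asymptotic claim $\max\big(1,M^{1/p_*}\big)\simeq(\text{right-hand side of the theorem})$ with $M=\varepsilon\,\mathbb{E}\big[(1+\mathrm{Bin}(d-1,\varepsilon))^{p_*-1}\big]$.

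The genuine gap is that this asymptotic claim \emph{is} the theorem, and you explicitly leave it unproved. The thresholds $p_*e^{-p_*}$ and $p_*e^{-1}$, the logarithmic profile $p_*\log^{-1}(p_*/A_p)$ in the middle window, and the constant $e$ in the third regime all have to be extracted from a Laplace-type analysis of the binomial moment that is uniform over $\varepsilon\in(0,\frac12]$, $d\in\NN$ and $p\in(1,\infty)$. As written, Step~4 is a plausibility argument: you identify a reasonable exponent functional to maximise and check that the three profiles agree at the matching points, but you do not verify that the maximiser's location actually produces these thresholds in terms of $A_p=\varepsilon d^{1/p}$ (rather than, say, $\varepsilon d$), you do not justify the Stirling replacement of $\binom{d-1}{j}$ uniformly down to bounded $j$ and up to $j\simeq d$, and you do not control the error terms in the delicate window where $M^{1/p_*}$ is comparable to $1$ and the outer $\max(1,\cdot)$ decides the answer --- precisely the window you flag yourself. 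The reduction is good groundwork and quite possibly parallels the route taken in \cite{KRR23}, but the quantitative content of Theorem~\ref{T2} is exactly what remains to be proved.
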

	
	To be absolutely precise, in \cite[Theorem~1.2]{KRR23} the family $\SSS'_0 \coloneqq \SSS_0 \setminus \{Q_0\}$ was studied instead, but obviously we have $\|\mathcal{M}_{\mathcal{S}'_0}\|_{L^p \to L^{p,\infty}} \simeq \|\mathcal{M}_{\mathcal{S}_0}\|_{L^p \to L^{p,\infty}}$ uniformly in all parameters.
	In order to make the exposition more instructive, we also note that the right-hand side increases with $A_p(\mathcal S_0)$, while the two extreme cases, $\|\mathcal{M}_{\mathcal{S}_0}\|_{L^p \to L^{p,\infty}} \simeq 1$ and $\|\mathcal{M}_{\mathcal{S}_0}\|_{L^p \to L^{p,\infty}} \simeq e A_p(\mathcal S_0)$, are related to the following two simple observations:
	\begin{itemize}
		\item if $f = \ind{Q_0}$, then $\mathcal{M}_{\mathcal{S}_0} f \geq \ind{Q_0}$,
		\item if $f = \ind{Q_0}$, then $\mathcal{M}_{\mathcal{S}_0} f \geq \varepsilon \ind{Q_0 \cup Q_1 \cup \dots \cup Q_d}$, while $|Q_0 \cup Q_1 \cup \dots \cup Q_d| \simeq d |Q_0|$.
	\end{itemize}
	
	Our strategy is to combine Theorem~\ref{T2} with Lemma~\ref{L0} and Proposition~\ref{P2} in order to produce many differentiation bases with very peculiar properties. Before that, we explain how to cover a given rectangle $U \subseteq \tom$ by many $(\varepsilon, d)$-configurations of a particular form.   
	
	\begin{lemma} \label{L1}
		Fix $d \in \NN$, $\varepsilon \in (0,\frac{1}{2}]$, $m \in \NN$, and a rectangle $U \subseteq \tom$. Then $U$ can be covered, up to a set of measure zero, by a family $\{E_n : n \in \NN\}$ of disjoint sets $E_n \coloneqq \bigcup_{Q \in \SSS_n} Q$ for some $(\varepsilon, d)$-configurations $\SSS_n$ such that $|E_{n_1}| = |E_{n_2}|$ whenever $\lceil \frac{n_1}{2^m}\rceil = \lceil \frac{n_2}{2^m} \rceil$.      
	\end{lemma}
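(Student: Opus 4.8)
The plan is to reduce to covering a single dyadic Rubio de Francia box, then to cover each such box by a recursive ``layering'' procedure, and finally to organise the resulting configurations into batches of cardinality divisible by $2^m$ so that the condition $|E_{n_1}|=|E_{n_2}|$ for $\lceil n_1/2^m\rceil=\lceil n_2/2^m\rceil$ comes out automatically. Since $U$ is a rectangle, each of its finitely many nontrivial coordinate intervals is, up to a null set, a countable disjoint union of dyadic intervals; hence $U$ is, up to a null set, a countable disjoint union of dyadic rectangles, and each dyadic rectangle is, up to a null set, a finite disjoint union of members of $\mathcal{R}_0$ of any level fine enough to refine it in every coordinate. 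Thus $U=\bigsqcup_j P_j$ up to a null set for a countable pairwise disjoint family $P_j\in\mathcal{R}_0$, and it suffices to cover each $P_j$, up to a null set, by pairwise disjoint $(\varepsilon,d)$-configurations contained in $P_j$, arranged as a sequence of finite batches each of cardinality divisible by $2^m$ and each consisting of configurations of a single common measure; concatenating these sequences over $j$ then produces $\{E_n\}$, and the block condition holds because no block of $2^m$ consecutive indices straddles two batches.

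The basic layer inside a box $B\in\mathcal{R}_0$ with $|B|=2^{-\ell_B}$ works as follows. Choose $\ell'\geq\ell_B+d+m$ large enough that $2^{-\ell'}\leq 2^{-(d-1)^2-1}$ (so the configurations around level-$\ell'$ boxes are defined) and that the first $d$ coordinates of a level-$\ell'$ box are strictly finer than those of $B$. The level-$\ell'$ members of $\mathcal{R}_0$ inside $B$ tile $B$ up to a null set; as the centres $Q_0$ of the configurations I would take those whose position among these subdivisions of $B$ is even and not the last in each of the first $d$ coordinates and arbitrary in all other coordinates, placing around each the configuration of Definition~\ref{epsn}. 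The ``not last'' constraint keeps every configuration inside $B$, and the ``even'' constraint makes them pairwise disjoint, since in each of the first $d$ coordinates the configuration around $Q_0$ meets only the tile of $Q_0$ and the next one. This batch has cardinality $2^{-d}|B|/2^{-\ell'}=2^{\ell'-\ell_B-d}$, divisible by $2^m$, and its configurations all have measure $(1+d(1-\varepsilon))2^{-\ell'}$. A point of $B$ belongs to one of them precisely when, in the pertinent pair (even tile, odd tile) of level-$\ell'$ tiles in each of the first $d$ coordinates, it lies in the even tile in all but at most one coordinate and in the left $(1-\varepsilon)$-portion of the odd tile in the exceptional one; hence the batch covers the fixed fraction $2^{-d}(1+d(1-\varepsilon))\in(0,1)$ of $B$, and the uncovered remainder is, up to a null set, a countable disjoint union of dyadic rectangles: those in which at least two of the first $d$ coordinates land in an odd tile, and those in which exactly one does and lands in the right $\varepsilon$-sliver of that tile (if $\varepsilon$ is not dyadic such a sliver is still a countable disjoint union of dyadic intervals, which changes nothing). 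Refining once more, the remainder is a countable disjoint union of members of $\mathcal{R}_0$.

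Applying this step to every box that arises yields a countable tree of boxes, each node carrying one batch; configurations of a deeper layer lie inside the still-uncovered remainder, so the whole collection is pairwise disjoint and contained in $U$, and after $\ell$ layers the uncovered part of each $P_j$ has measure $(1-2^{-d}(1+d(1-\varepsilon)))^\ell|P_j|\to 0$, so the collection covers $U$ up to a null set. Listing the batches in any fixed enumeration of the countably many nodes would then finish the proof. The crux --- and the step I expect to be the main obstacle --- is the assertion that the uncovered remainder of a single layer is, up to a null set, a countable disjoint union of dyadic Rubio de Francia boxes: this is what makes the recursion self-sustaining, and it rests on the elementary combinatorial description of the uncovered set together with the observation that the $\varepsilon$-fraction sliver at the right end of a dyadic interval, although not dyadic in general, is still a countable disjoint union of dyadic intervals. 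The remaining points --- divisibility by $2^m$, the tree enumeration, and the geometric decay of the uncovered measure --- are routine.
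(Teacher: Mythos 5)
Your argument is correct, and its overall architecture coincides with the paper's: both split $U$, up to a null set, into dyadic Rubio de Francia boxes, cover the fixed fraction $c=2^{-d}(1+d-\varepsilon d)$ of each box by pairwise disjoint $(\varepsilon,d)$-configurations, decompose the uncovered remainder back into members of $\RRR_0$, and iterate so that the uncovered measure decays like $(1-c)^{\ell}$; in both proofs the $2^m$-divisibility of the batches comes from taking the configuration scale at least $m+d$ levels finer than the ambient box and then concatenating the finite batches so that no block of $2^m$ consecutive indices straddles two of them. The one place you genuinely diverge is in how the equal-measure batches are produced: the paper splits each box $S$ into $2^{2k+2}$ congruent sub-boxes $S^*$ and places a \emph{single} configuration in the corner of each, so that equality of measures follows from congruence and from treating all sub-boxes identically, whereas you pack $2^{\ell'-\ell_B-d}$ identical configurations into one box $B$ at once by selecting the even-position tiles of a fine tiling in each of the first $d$ coordinates. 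Both devices are valid, cover the same fraction per layer, and cost about the same effort; yours has the mild advantage that disjointness within a batch is checked by a one-line parity argument, while the paper's has the advantage that the recursion acts on a single shrinking complement $S^*\setminus\bigcup_{Q\in\SSS}Q$ rather than on a batch-sized union. Finally, the step you single out as the crux---that the uncovered remainder, including the non-dyadic $\varepsilon$-slivers, is still a countable disjoint union of dyadic boxes up to null sets---is indeed what keeps the recursion self-sustaining, and the paper's proof needs (and silently uses) exactly the same observation when it splits $S^*\setminus\bigcup_{Q\in\SSS}Q$ into maximal members of $\RRR_0$; your explicit treatment of this point is a small improvement in rigor rather than a deviation.
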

	\begin{proof}
		For any $k \in \NN$, denote 
		\begin{align*}
			P_k & \coloneqq H_{k^2+k} = \{0 / 2^k, 1/2^k, \dots, (2^k-1)/2^k \}^{k+1} \times \{{\bf 0}^{k+1, \omega}\}, \\
			Q_k & \coloneqq V_{k^2+k} = (0,2^{-k})^{k+1} \times \TT^{k+1, \omega}.
		\end{align*}
		We split $U$, up to a set of measure zero, into disjoint subsets $S \in \RRR_0$ of the form $g + Q_k$ for some $k \geq m+d$ and $g \in P_k$ depending on $S$. Next, we split each $S$ into $2^{2k+2}$ sets $S^* \in \RRR_0$ of the form $g^* + Q_{k+1}$ for some $g^* \in P_{k+1}$. Since $k \geq m$, if each of the $2^{2k+2}$ sets $S^*$ is covered in the same way, a suitable enumeration of the $(\varepsilon, d)$-configurations $\SSS_n$ will ensure that $|E_{n_1}| = |E_{n_2}|$ whenever $\lceil \frac{n_1}{2^m}\rceil = \lceil \frac{n_2}{2^m} \rceil$ (see Figure~\ref{Pic4.3}).
		
		It remains to cover each $S^*$ in a suitable way. Consider the $(\varepsilon, d)$-configuration $\SSS$ around 
		\[
		Q^* \coloneqq g^* + (0,2^{-k-2})^{d} \times (0,2^{-k-1})^{k+2-d} \times \TT^{k+2, \omega}.
		\]
		Note that $Q^* \in \RRR_0$ by $k \geq d$. Moreover, we have $|\bigcup_{Q \in \SSS} Q| = c |S^*|$ with 
		\[
		c \coloneqq 2^{-d} (1+d-\varepsilon d) \in (0,1).
		\]
		Next, as before, we split $S^* \setminus \bigcup_{Q \in \SSS} Q$ into disjoint subsets of the form $g' + Q_{k'}$, where $k' \geq m+d$ and $g' \in P_{k'}$. Here we take the maximal subsets to ensure that each of the $2^{2k+2}$ sets of the form $S^* \setminus \bigcup_{Q \in \SSS} Q$ is split in the same way. Then, for every subset of $S^* \setminus \bigcup_{Q \in \SSS} Q$ obtained in this way, we repeat the procedure described above. The portion of $S^*$ covered in this step has measure $c(1-c)|S^*|$. We continue in the same way and the portion of $S^*$ covered in the next step has measure $c(1-c)^2|S^*|$. After infinitely many such steps we cover $S^*$ up to a set of measure zero, since $c + c(1-c) + c(1-c)^2 + \cdots = 1$. 
	\end{proof}
	
	\begin{center}
		\begin{figure}[H]
			\begin{tikzpicture}
			[scale=0.8]
			
			\draw[thick] (0,0) rectangle (8,8);
			\draw[dashed] (4,0) -- (4,8);
			\draw[dashed] (0,4) -- (8,4);
			
			\quarters{2}{4}{2}{4}
			\quarters{0}{0.5}{3.5}{4}
			\quarters{0.5}{1}{3.5}{4}
			\quarters{1}{1.5}{3.5}{4}
			\quarters{1.5}{2}{3.5}{4}
			\quarters{2.5}{3}{3.5}{4}
			\quarters{3.5}{4}{3.5}{4}
			\quarters{2.5}{3}{2.5}{3}
			\quarters{3.5}{4}{2.5}{3}
			\quarters{3.5}{4}{0}{0.5}
			\quarters{3.5}{4}{0.5}{1}
			\quarters{3.5}{4}{1}{1.5}
			\quarters{3.5}{4}{1.5}{2}
			
			\config{2}{3}{2}{3}{0.25}{blue}
			\config{3}{4}{3}{4}{0.25}{blue}
			\config{2}{3}{3}{4}{0.25}{blue}
			\config{3}{4}{2}{3}{0.25}{blue}
			
			\quarters{6}{8}{2}{4}
			\quarters{4}{4.5}{3.5}{4}
			\quarters{4.5}{5}{3.5}{4}
			\quarters{5}{5.5}{3.5}{4}
			\quarters{5.5}{6}{3.5}{4}
			\quarters{6.5}{7}{3.5}{4}
			\quarters{7.5}{8}{3.5}{4}
			\quarters{6.5}{7}{2.5}{3}
			\quarters{7.5}{8}{2.5}{3}
			\quarters{7.5}{8}{0}{0.5}
			\quarters{7.5}{8}{0.5}{1}
			\quarters{7.5}{8}{1}{1.5}
			\quarters{7.5}{8}{1.5}{2}
			
			\config{6}{7}{2}{3}{0.25}{blue}
			\config{7}{8}{3}{4}{0.25}{blue}
			\config{6}{7}{3}{4}{0.25}{blue}
			\config{7}{8}{2}{3}{0.25}{blue}
			
			\quarters{2}{4}{6}{8}
			\quarters{3.5}{4}{4}{4.5}
			\quarters{3.5}{4}{4.5}{5}
			\quarters{3.5}{4}{5}{5.5}
			\quarters{3.5}{4}{5.5}{6}
			\quarters{3.5}{4}{6.5}{7}
			\quarters{3.5}{4}{7.5}{8}
			\quarters{2.5}{3}{6.5}{7}
			\quarters{2.5}{3}{7.5}{8}
			\quarters{0}{0.5}{7.5}{8}
			\quarters{0.5}{1}{7.5}{8}
			\quarters{1}{1.5}{7.5}{8}
			\quarters{1.5}{2}{7.5}{8}
			
			\config{2}{3}{6}{7}{0.25}{blue}
			\config{3}{4}{7}{8}{0.25}{blue}
			\config{3}{4}{6}{7}{0.25}{blue}
			\config{2}{3}{7}{8}{0.25}{blue}
			
			\quarters{6}{8}{6}{8}
			\quarters{7.5}{8}{4}{4.5}
			\quarters{7.5}{8}{4.5}{5}
			\quarters{7.5}{8}{5}{5.5}
			\quarters{7.5}{8}{5.5}{6}
			\quarters{7.5}{8}{6.5}{7}
			\quarters{7.5}{8}{7.5}{8}
			\quarters{6.5}{7}{6.5}{7}
			\quarters{6.5}{7}{7.5}{8}
			\quarters{4}{4.5}{7.5}{8}
			\quarters{4.5}{5}{7.5}{8}
			\quarters{5}{5.5}{7.5}{8}
			\quarters{5.5}{6}{7.5}{8}
			
			\config{6}{7}{6}{7}{0.25}{blue}
			\config{7}{8}{7}{8}{0.25}{blue}
			\config{6}{7}{7}{8}{0.25}{blue}
			\config{7}{8}{6}{7}{0.25}{blue}
			
			\config{0}{4}{0}{4}{0.25}{blue}
			\config{4}{8}{4}{8}{0.25}{blue}
			\config{0}{4}{4}{8}{0.25}{blue}
			\config{4}{8}{0}{4}{0.25}{blue}
			
			\node at (0.75,0.75)[color=blue]{\LARGE$\SSS_1$};
			\node at (4.75,0.75)[color=blue]{\LARGE$\SSS_2$};
			\node at (0.75,4.75)[color=blue]{\LARGE$\SSS_3$};
			\node at (4.75,4.75)[color=blue]{\LARGE$\SSS_4$};
			
			\end{tikzpicture}
			\vspace{-1em}
			\caption{Covering of $S$ after splitting into four congruent rectangles $S^*$.}
			\label{Pic4.3}
			\vspace{-1.5em}
		\end{figure}
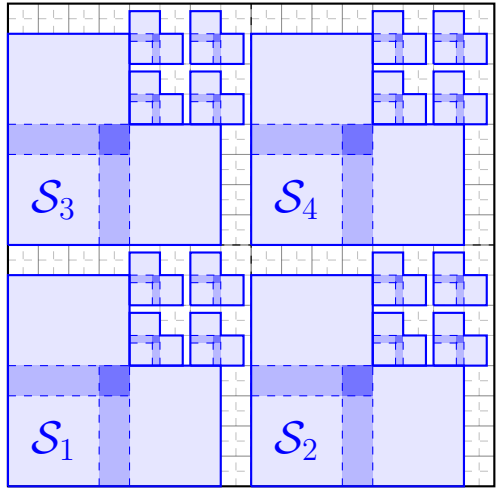
	\end{center}
	
	The final result of this section provides a guideline for constructing differentiation bases to which Theorem~\ref{T2} combined with Lemma~\ref{L0} and Proposition~\ref{P2} can be successfully applied. This construction will play a crucial role in the proof of Theorem~\ref{T1}.  
	
	\begin{proposition} \label{P3}
		Fix a rectangle $U \subseteq \tom$. Let $(d_j)_{j \in \NN}$ and $(m_j)_{j \in \NN}$ be sequences of positive integers and $(\varepsilon_j)_{j \in \NN}$ be a sequence of numbers taking values in $(0,\frac{1}{2}]$. Then there exists a family $\SSS$ of the form $\bigcup_{j \in \NN} \bigcup_{n \in \NN} \SSS_{j,n}$ which satisfies the following properties.
		\begin{enumerate}
			\item[\namedlabel{A1}{\rm(A1)}] Each $\SSS_{j,n}$ is an $(\varepsilon_j, d_j)$-configuration around some $Q_{j,n} \in \RRR_0$.
			\item[\namedlabel{A2}{\rm(A2)}] For every $j \in \NN$, the rectangle $U$ can be covered, up to a set of measure zero, by a~family $\{E_{j,n} : n \in \NN\}$ of disjoint sets $E_{j,n} \coloneqq \bigcup_{Q \in \SSS_{j,n}} Q$. 
			\item[\namedlabel{A3}{\rm(A3)}] For every $j_1, j_2, n_1, n_2 \in \NN$ such that $j_1 < j_2$, if $Q_1 \in \SSS_{j_1,n_1}$ and $Q_2 \in \SSS_{j_2,n_2}$, then either $Q_1 \cap Q_2 = \emptyset$ or $Q_2 \subseteq Q_1$.
			\item[\namedlabel{A4}{\rm(A4)}] There exists a family $\{\Lambda_j : j \in \NN\}$ of subsets $\Lambda_j \subseteq \NN$ such that the following holds. Set $F_j \coloneqq \bigcup_{n \in \Lambda_j} Q_{j,n}$ and $F^*_j \coloneqq \bigcup_{n \in \Lambda_j} E_{j,n}$ for every $j \in \NN$. Then the sets 
			$F^*_j$ are mutually independent events with probability $2^{-m_j}$ with respect to the uniform probability measure on $U$, that is,
			$|\bigcap_{j \in J} F^*_{j}| = |U| \prod_{j \in J} 2^{-m_{j}}$
			for every finite and nonempty subset $J \subseteq \NN$. 
		\end{enumerate}
	\end{proposition}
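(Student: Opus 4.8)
The plan is to build the family $\SSS$ by an iterative splitting procedure along $j$, at each stage refining a decomposition of $U$ into congruent dyadic subrectangles in $\RRR_0$, and placing $(\varepsilon_j, d_j)$-configurations inside them via Lemma~\ref{L1}. The key tension is between property \ref{A2} (for each fixed $j$ we need a full cover of $U$ by $E_{j,n}$), property \ref{A3} (configurations at level $j_2 > j_1$ must nest inside those at level $j_1$, which forces the level-$j_2$ construction to be carried out separately inside each level-$j_1$ cell), and property \ref{A4} (the "success sets" $F_j^*$ must behave like independent events of probability $2^{-m_j}$, which dictates how the $\Lambda_j$ are chosen relative to the enumeration).

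First I would set up the recursion. Start with $\mathcal{U}_0 = \{U\}$. Having constructed a countable partition $\mathcal{U}_{j-1}$ of $U$ (mod null sets) into rectangles $S \in \RRR_0$, apply Lemma~\ref{L1} \emph{inside each} $S \in \mathcal{U}_{j-1}$ with parameters $d_j$, $\varepsilon_j$, and the inner shift parameter taken so large (say $m = m_j'$ chosen below) that each $S$ is covered mod null sets by disjoint sets $E_n^{(S)} = \bigcup_{Q \in \SSS_n^{(S)}} Q$ with $|E_{n_1}^{(S)}| = |E_{n_2}^{(S)}|$ whenever $\lceil n_1/2^{m_j'}\rceil = \lceil n_2/2^{m_j'}\rceil$. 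Crucially, Lemma~\ref{L1} is carried out so that \emph{every} $S \in \mathcal{U}_{j-1}$ is covered by the \emph{same} combinatorial pattern (same sequence of $k$'s and relative positions), so that the index set for $n$ is independent of $S$; this is exactly what Lemma~\ref{L1} delivers when we insist all cells be split identically. Collect all these configurations over all $S$ and enumerate them as $\SSS_{j,n}$, $n \in \NN$; set $Q_{j,n}$ to be the center rectangle, $E_{j,n} = \bigcup_{Q \in \SSS_{j,n}} Q$, and let $\mathcal{U}_j$ be the common dyadic refinement of $\mathcal{U}_{j-1}$ generated by all the rectangles appearing in the level-$j$ construction (each such rectangle lies in $\RRR_0$ and is contained in a unique cell of $\mathcal{U}_{j-1}$, since within a single dyadic cell the sets produced by Lemma~\ref{L1} are themselves dyadic of smaller scale). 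Property \ref{A1} is then immediate, \ref{A2} holds because the union over $n$ of $E_{j,n}$ covers each $S \in \mathcal{U}_{j-1}$ mod null, hence covers $U$, and \ref{A3} holds because each $Q_2 \in \SSS_{j_2, n_2}$ lives inside a cell of $\mathcal{U}_{j_1}$, hence inside a cell of $\mathcal{U}_{j-1+\cdots}$ refining the dyadic grid, and any two dyadic cubes in $\tom$ are either nested or disjoint---so $Q_2$ is either disjoint from $Q_1$ or contained in the cell containing $Q_1$; a small additional bookkeeping (ensuring the level-$j_1$ cells are fixed \emph{before} level $j_2$ is built, which the recursion guarantees) upgrades "contained in the cell" to "$Q_2 \subseteq Q_1$ or $Q_1 \cap Q_2 = \emptyset$", after one notes that the level-$j_1$ cells of $\mathcal{U}_{j_1}$ that get subdivided at level $j_2$ are precisely the cells of $\mathcal{U}_{j_1}$, and each $Q_{j_1, n_1}$ together with its configuration sits inside one such cell which is then refined.

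The substantive point is \ref{A4}, the independence. Here I would exploit the freedom in choosing $\Lambda_j$ and the inner parameters $m_j'$. At level $j$, because Lemma~\ref{L1} groups the indices $n$ into blocks of size $2^{m_j'}$ with equal measure $|E_{j,n}|$ within each block, and because the construction inside every cell of $\mathcal{U}_{j-1}$ is identical, we may designate inside \emph{each} cell $S$ a fixed fraction $2^{-m_j}$ of its configurations to be "good": concretely, within each block of $2^{m_j'}$ equal-measure sets, mark the first $2^{m_j' - m_j}$ of them (choosing $m_j' \geq m_j$), and let $\Lambda_j$ collect all marked indices over all cells. Then inside every cell $S \in \mathcal{U}_{j-1}$ the marked sets occupy exactly a $2^{-m_j}$-fraction of the measure of $S$: $|F_j^* \cap S| = 2^{-m_j}|S|$ for every $S \in \mathcal{U}_{j-1}$. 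Since $\mathcal{U}_{j-1}$ refines $\mathcal{U}_{j'}$ for every $j' < j$, and indeed refines the partition into the sets $E_{j', n}$ and in particular refines each $F_{j'}^*$ (each $F_{j'}^*$ is a union of cells of $\mathcal{U}_{j'}$, hence a union of cells of $\mathcal{U}_{j-1}$), we get by summing over the relevant cells that $|F_j^* \cap A| = 2^{-m_j}|A|$ for any $A$ that is a union of cells of $\mathcal{U}_{j-1}$, in particular for $A = \bigcap_{j' \in J,\, j' < j} F_{j'}^*$. Iterating this identity downward over the finitely many indices in $J$ gives $|\bigcap_{j \in J} F_j^*| = |U|\prod_{j \in J} 2^{-m_j}$, which is exactly \ref{A4}. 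I expect the main obstacle to be precisely this last bookkeeping: making sure that the "marked fraction is exactly $2^{-m_j}$ in every cell" is compatible with the requirement (from \ref{A3} and the identical-pattern hypothesis of Lemma~\ref{L1}) that the level-$j$ pattern be the same in every cell of $\mathcal{U}_{j-1}$, and that the enumeration $\SSS_{j,n}$ be chosen so that $\Lambda_j$ is a clean index set; everything else is a routine consequence of the dyadic structure of $\RRR_0$ and the measure-equalization already provided by Lemma~\ref{L1}.
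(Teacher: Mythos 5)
Your proposal is correct and follows essentially the same route as the paper's proof: an induction on $j$ in which Lemma~\ref{L1} is applied separately inside each atom of the $\sigma$-algebra generated by the previous level's configurations, with $\Lambda_j$ selecting a $2^{-m_j}$-fraction of each equal-measure block so that $|F_j^* \cap A| = 2^{-m_j}|A|$ for every union $A$ of such atoms, which yields the independence in \ref{A4}. The only imprecision is in your justification of \ref{A3}: the shifted rectangles $Q_i$, $i \geq 1$, of a configuration are not elements of $\RRR_0$, so the remark that dyadic cubes are nested or disjoint does not apply to them directly; what actually carries the argument (and what the paper uses) is that each atom of the level-$j_1$ partition is by construction either contained in or disjoint from every rectangle of every $\SSS_{j_1,n_1}$, while every level-$j_2$ rectangle lies inside a single such atom.
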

	
	\begin{proof}
		We construct $\SSS$ inductively with respect to $j \in \NN$ (see Figure~\ref{Pic4.4}). 
		
		In the first step, we apply Lemma~\ref{L1} with $d = d_1$, $\varepsilon = \varepsilon_1$, $m = m_1$, and $U$ as above. We obtain a family $\{\SSS_{1,n} : n \in \NN\}$ of $(\varepsilon_1, d_1)$-configurations such that the sets $E_{1,n}$ are disjoint and cover $U$ up to a set of measure zero. Moreover, $|E_{1,n_1}| = |E_{1,n_2}|$ whenever $\lceil \frac{n_1}{2^{m_1}}\rceil = \lceil \frac{n_2}{2^{m_1}} \rceil$. We set $\Lambda_1 \coloneqq \{i2^{m_1} : i \in \NN\}$ so that $|F_1^*| = |U| 2^{-m_1}$. Note that one can split $U$, up to a set of measure zero, into countably many atoms $U_1$, that is, the minimal nonempty sets in the $\sigma$-algebra $\Sigma_1$ of subsets of $U$ generated by all elements of the $(\varepsilon_1, d_1)$-configurations $\SSS_{1,n}$. Furthermore, such atoms are rectangles in $\tom$.  
		
		Next, suppose that, for some $j_0 \in \NN$, the families $\{\SSS_{1,n} : n \in \NN\}, \dots,\{\SSS_{j_0,n} : n \in \NN\}$ and the subsets $\Lambda_1, \dots, \Lambda_{j_0}$ have been defined and the following properties hold:
		\begin{itemize}
			\item $|\bigcap_{j \in J} F^*_{j}| = |U| \prod_{j \in J} 2^{-m_{j}}$
			for every nonempty subset $J \subseteq \{1, \dots, j_0\}$,
			\item for every $j \in \{1, \dots, j_0\}$, one can split $U$, up to a set of measure zero, into countably many atoms from the $\sigma$-algebra $\Sigma_j$ generated by the $(\varepsilon_j, d_j)$-configurations $\SSS_{j,n}$,
			\item for every $j_1, j_2 \in \{1,\dots,j_0\}$, if $j_1 \leq j_2$ and $U_{j_1} \in \Sigma_{j_1}, U_{j_2} \in \Sigma_{j_2}$ are atoms, then 
			\[
			U_{j_1} \cap U_{j_2} = \emptyset
			\quad \text{or} \quad U_{j_2} \subseteq U_{j_1}.
			\]
		\end{itemize}
		In order to define $\{\SSS_{j_0+1,n} : n \in \NN\}$ and $\Lambda_{j_0+1}$, we proceed as follows. For every atom $U_{j_0} \in \Sigma_{j_0}$, we apply Lemma~\ref{L1} with $d = d_{j_0+1}$, $\varepsilon = \varepsilon_{j_0+1}$, $m = m_{j_0+1}$, and $U_{j_0}$ in place of $U$. We obtain a family $\{\SSS_{j_0+1,n}^{U_{j_0}} : n \in \NN\}$ of $(\varepsilon_{j_0+1}, d_{j_0+1})$-configurations such that the sets $E_{j_0+1,n}^{U_{j_0}}$ are disjoint and cover $U_{j_0}$ up to a set of measure zero. Moreover, $|E_{j_0+1,n_1}^{U_{j_0}}| = |E_{j_0+1,n_2}^{U_{j_0}}|$ whenever $\lceil n_1 / 2^{m_{j_0+1}} \rceil = \lceil n_2 / 2^{m_{j_0+1}} \rceil$. Consider the whole family 
		\[
		\{\SSS_{j_0+1,n}^{U_{j_0}} : n \in \NN, \, U_{j_0} \in \Sigma_{j_0}\}
		\]
		and let $\{\SSS_{j_0+1,n} : n \in \NN\}$ be its 
		enumeration such that $|E_{j_0+1,n_1}| = |E_{j_0+1,n_2}|$ and both sets belong to the same atom $U_{j_0} \in \Sigma_{j_0}$ whenever $\lceil n_1 / 2^{m_{j_0+1}} \rceil = \lceil n_2 / 2^{m_{j_0+1}} \rceil$. We also set $\Lambda_{j_0+1} \coloneqq \{i2^{m_{j_0+1}} : i \in \NN\}$. It is routine to check that the three properties above hold with $\{1,\dots,j_0\}$ replaced by $\{1, \dots, j_0+1\}$.
		
		Let $\SSS \coloneqq \bigcup_{j \in \NN} \bigcup_{n \in \NN} \SSS_{j,n}$. Again, it is routine to verify conditions~\ref{A1}--\ref{A4}. 
	\end{proof}
	
	\begin{center}
		\begin{figure}[H]
			\begin{tikzpicture}
			[scale=1]
			
			\quarters{0}{2}{0}{2}
			
			\quarters{0}{1}{2}{3}
			\quarters{1}{2}{2}{3}
			\quarters{2}{3}{2}{3}
			\quarters{2}{3}{1}{2}
			\quarters{2}{3}{0}{1}
			
			\quarters{0.5}{1}{0.5}{1}
			\quarters{1.5}{2}{0.5}{1}
			\quarters{0.5}{1}{1.5}{2}
			\quarters{1.5}{2}{1.5}{2}
			
			\quarters{0}{2}{4}{6}
			\quarters{2}{4}{4}{6}
			\quarters{4}{6}{0}{2}
			\quarters{4}{6}{2}{4}
			
			\quarters{0}{1}{6}{7}
			\quarters{1}{2}{6}{7}
			\quarters{2}{3}{6}{7}
			\quarters{3}{4}{6}{7}
			
			\quarters{6}{7}{0}{1}
			\quarters{6}{7}{1}{2}
			\quarters{6}{7}{2}{3}
			\quarters{6}{7}{3}{4}
			
			\quarters{0}{1}{3}{4}
			\quarters{1}{2}{3}{4}
			\quarters{2}{3}{3}{4}
			\quarters{3}{4}{3}{4}
			\quarters{3}{4}{2}{3}
			\quarters{3}{4}{1}{2}
			\quarters{3}{4}{0}{1}
			
			\draw[very thick, blue] (0,0) rectangle (3,3);
			\draw[very thick, blue] (0,3) rectangle (3,4);
			\draw[very thick, blue] (3,0) rectangle (4,3);
			\draw[very thick, blue] (3,3) rectangle (4,4);
			\draw[very thick, blue] (0,4) rectangle (4,7);
			\draw[very thick, blue] (4,0) rectangle (7,4);
			
			\config{0}{8}{0}{8}{0.25}{blue}
			
			\config{0}{1}{0}{1}{0.35}{red}
			\config{1}{2}{0}{1}{0.35}{red}
			\config{0}{1}{1}{2}{0.35}{red}
			\config{1}{2}{1}{2}{0.35}{red}
			
			\config{0}{0.5}{2}{2.5}{0.35}{red}
			\config{0}{0.5}{2.5}{3}{0.35}{red}
			\config{0.5}{1}{2}{2.5}{0.35}{red}
			\config{0.5}{1}{2.5}{3}{0.35}{red}
			\config{1}{1.5}{2}{2.5}{0.35}{red}
			\config{1}{1.5}{2.5}{3}{0.35}{red}
			\config{1.5}{2}{2}{2.5}{0.35}{red}
			\config{1.5}{2}{2.5}{3}{0.35}{red}
			\config{2}{2.5}{2}{2.5}{0.35}{red}
			\config{2}{2.5}{2.5}{3}{0.35}{red}
			\config{2.5}{3}{2}{2.5}{0.35}{red}
			\config{2.5}{3}{2.5}{3}{0.35}{red}
			\config{2}{2.5}{1}{1.5}{0.35}{red}
			\config{2}{2.5}{1.5}{2}{0.35}{red}
			\config{2.5}{3}{1}{1.5}{0.35}{red}
			\config{2.5}{3}{1.5}{2}{0.35}{red}
			\config{2}{2.5}{0}{0.5}{0.35}{red}
			\config{2}{2.5}{0.5}{1}{0.35}{red}
			\config{2.5}{3}{0}{0.5}{0.35}{red}
			\config{2.5}{3}{0.5}{1}{0.35}{red}
			
			\end{tikzpicture}
			\vspace{-1em}
			\caption{Second step of the iterative construction of $\SSS$ in $\TT^2 \times \TT^{2,\omega}$.}
			\label{Pic4.4}
			\vspace{-1.5em}
		\end{figure}
	\end{center}
	
	\section{Proofs of Theorem~\ref{T0} and Theorem~\ref{T1}} \label{S5}
	
	In the last section, we gather all previously obtained results to prove our main theorems. 
	
	\begin{proof}[Proof of Theorem~\ref{T0}]
		First, let us construct $\BBB_{\geq}$. If $p_0 = 1$, then we can take $\BBB_{\geq} = \RRR_0$. Indeed, we have ${\rm diff}(\RRR_0) = [1,\infty]$ by \cite[Corollary~16]{FR20}. For $p_0 \in (1,\infty)$, let $\BBB_{\geq}$ be the family $\SSS$ from Proposition~\ref{P3} applied with 
		\[
		U = \tom, \quad d_j = j, \quad \varepsilon_j = j^{-1/p_0} / 2 
		\]
		and $m_j \in \NN$ being the unique positive integer such that 
		\[
		j^{-2}/4 < 2^{-m_j} (1+d_j-\varepsilon_j d_j)^{-1} \leq j^{-2}/2.
		\]
		We notice that if $j \in \NN$ and $\varepsilon \in (0,\frac{1}{2}]$, then
		\[
		j^{-2} (1+j-\varepsilon j) / 2 < j^{-2} (1+j) / 2 \leq j^{-1} \leq 1, 
		\]
		thanks to which such a choice of $m_j \in \NN$ is possible. 
		Then
		$
		\lim_{j \to \infty} \varepsilon_j d_j^{1/p_0} < \infty
		$ 
		and so $\MMM_{\BBB_{\geq}}$ is of weak-type $(p_0,p_0)$ by conditions~\ref{A1}--\ref{A3} together with Proposition~\ref{P2} and Theorem~\ref{T2}. Therefore, $p_0 \in {\rm diff}(\BBB_{\geq})$ by Lemma~\ref{L0} (for the density assumption, see \cite[Proposition~7.9]{Fo99}), which implies $[p_0, \infty] \subseteq {\rm diff}(\BBB_{\geq})$ by Proposition~\ref{P1}. 
		
		Next, choose $p \in [1,p_0)$ 
		and define
		\begin{align} \label{def-f}
			f \coloneqq \sup_{j \in \NN} f_j,
		\end{align}
		where
		$f_j \coloneqq \varepsilon_j^{-1} \ind{F_j}$ for $F_j$ from \ref{A4}.  
		Then $f \in L^p(\tom)$, since
		\[
		\| f \|_{p}^p \leq \sum_{j \in \NN} \|f_j\|_{p}^p
		= \sum_{j \in \NN}  \varepsilon_j^{-p} 2^{-m_j} (1+d_j-\varepsilon_j d_j)^{-1}
		\leq 2^{p-1} \sum_{j \in \NN} j^{p/p_0} j^{-2} < \infty. 
		\]
		In order to justify the first inequality above, set $G \coloneqq \{x \in \tom : |f_1(x)| > |f_2(x)|\}$. Then
		\[
		\| \max\{f_1,f_2\} \|_p^p = \int_G |f_1(x)|^p \, {\rm d}x + \int_{\tom \setminus G} |f_2(x)|^p \, {\rm d}x
		\leq \| f_1 \|_p^p + \| f_2 \|_p^p.
		\]
		By iterating this and passing to the limit on both sides, we obtain $\| f \|_{p}^p \leq \sum_{j \in \NN} \|f_j\|_{p}^p$. Next, note that by the second Borel--Cantelli lemma, for almost all $x \in \tom$, the set 
		\[
		\{ j \in \NN : x \in E_{j,n} \text{ for some } n \in \Lambda_j\}
		\]
		is infinite. Indeed, the sets $F^*_j$ are mutually independent events and, due to the choice $d_j=j$ and $\varepsilon_j = j^{-1/p_0} / 2$,
		\[
		\sum_{j \in \NN} |F_j^*| \geq \sum_{j \in \NN} j^{-2} (1+d_j-\varepsilon_j d_j) / 4 = \infty.
		\]
		Now, $\overline{f}(x) \geq 1$ holds for each such $x$. Indeed, if $x \in E_{j,n}$ and $n \in \Lambda_j$, then ${\rm Avg}_f(Q) \geq {\rm Avg}_{f_j}(Q) \geq 1$ for some $x \in Q \in \SSS_{j,n}$. Since $f$ vanishes outside $\bigcup_{j \in \NN} F_j$ and 
		\[
		\sum_{j \in \NN} |F_j| \leq \sum_{j \in \NN} j^{-2}/2 < 1,
		\]
		we have $\overline{f}(x) > f(x)$ on a set with positive measure. Consequently, $p \notin {\rm diff}(\BBB_{\geq})$. Since $p \in [1,p_0)$ was arbitrary, we obtain ${\rm diff}(\BBB_{\geq}) \subseteq [p_0, \infty]$.
		
		Similarly, let us construct $\BBB_{>}$. For $p_0 \in [1,\infty)$, apply Proposition~\ref{P3} with 
		\[
		U = \tom, \quad d_j = j, \quad \varepsilon_j = (j \log^{-2}(j+1))^{-1/p_0} / 2
		\]
		and $m_j \in \NN$ being the unique positive integer such that
		\[
		j^{-2} / 4 < 2^{-m_j} (1+d_j-\varepsilon_j d_j)^{-1} \leq j^{-2}/2.
		\]
		As before, such a choice is possible if $\varepsilon_j \in (0,\frac{1}{2}]$. To see this, note that
		$\sqrt{x} \geq \log(x+1)$ for $x \in [0,\infty)$, since both functions vanish at $x=0$ and $(2\sqrt{x})^{-1} \geq (x+1)^{-1}$ for $x \in (0,\infty)$. 
		Then 
		$
		\lim_{j \to \infty} \varepsilon_j d_j^{1/p} < \infty
		$
		for every $p \in (p_0, \infty)$ which implies 
		$(p_0, \infty] \subseteq {\rm diff}(\BBB_{>})$. 
		
		Next, consider 
		$f$ defined as in \eqref{def-f}. Then $f \in L^{p_0}(\tom)$ and, as before, we have $\overline{f}(x) > f(x)$ on a set with positive measure. Thus, $p_0 \notin {\rm diff}(\BBB_{>})$ so that ${\rm diff}(\BBB_{>}) \subseteq (p_0, \infty]$. 
	\end{proof}
	
	Before proving Corollary~\ref{Cor1}, we provide an additional remark concerning Theorem~\ref{T0}.
	
	\begin{remark} \label{remCHAR}
		In fact, condition~\ref{A4} is not essential---it merely serves to simplify the proof of Theorem~\ref{T0} by allowing us to invoke the second Borel--Cantelli lemma. To see this, assume that $\BBB$ satisfies conditions~\ref{A1}--\ref{A3} and fix $p \in (1,\infty)$. If the quantities $\varepsilon_j d_j^{1/p}$ are uniformly bounded in $j$, then, as before, $\MMM_\BBB$ is of weak-type $(p,p)$ and $p \in {\rm diff}(\BBB)$. In turn, assume that $\varepsilon_j^p d_j \geq j + N_0$ for all $j \in \NN$ and a large constant $N_0 \in \NN$, passing to a subsequence if needed.  
		For every $j_1, j_2 \in \NN$ such that $j_1 < j_2$, each set $E_{j_2,n}$ must be contained in some atom $U_{j_1} \in \Sigma_{j_1}$ in order for condition~\ref{A3} to hold. Thus, 
		\[
		|E_{j_2,n}| \leq (1+d_{j_2}-\varepsilon_{j_2} d_{j_2})(2-\varepsilon_{j_2})^{-d_{j_2}} |U_{j_1}|
		\]
		because $U_{j_1}$ is a rectangle with sides parallel to the coordinate axes. Moreover,
		\[
		(1+d_{j}-\varepsilon_{j} d_{j})(2-\varepsilon_{j})^{-d_{j}}
		\leq (2d_{j}) (2/3)^{d_{j}}
		\leq d_j^{-1}/8 \leq j^{-1}/8  
		\]
		if $N_0$ is large enough. Thus, there are sets $E_{j_2,n_1}, \dots, E_{j_2,n_k}$ whose union covers a portion of $U_{j_1}$ of size $\alpha |U_{j_1}|$ for $j_2^{-1}/8 \leq \alpha \leq j_2^{-1}/4$. The exact value of $\alpha$, however, may depend on the atom $U_{j_1}$; hence we cannot apply the second Borel--Cantelli lemma directly, but must instead mimic its proof. Consider 
		$f$ defined as in \eqref{def-f} with each $\Lambda_j$ replaced by the set of indices chosen to cover a suitable portion of each atom. It remains to verify that:
		\begin{itemize}
			\item $f$ belongs to $L^p(\tom)$ by $\sum_{j \in \NN} \varepsilon_j^{-p} j^{-1} (1+d_j-\varepsilon_j d_j)^{-1} / 4 < \infty$,
			\item $f$ vanishes on a set with positive measure by $\sum_{j \in \NN} j^{-1} (1+d_j-\varepsilon_j d_j)^{-1} / 4 < 1$,
			\item $\overline{f}(x) \geq 1$ almost everywhere by $\sum_{j \in \NN} j^{-1} / 8 = \infty$.
		\end{itemize}  
		
		To conclude, for every $p \in (1,\infty)$, one can prove the following characterization 
		\[
		p \in {\rm diff}(\BBB)
		\iff
		\sup \{\varepsilon_j d_j^{1/p} : j \in \NN\} < \infty  
		\]
		if $\BBB$ satisfies conditions~\ref{A1}--\ref{A3} (regardless of whether condition~\ref{A4} is satisfied). 
	\end{remark}
	
	\begin{proof} [Proof of Corollary~\ref{Cor1}]
		Let $\BBB$ be any basis in $\tom$ constructed in the proof of Theorem~\ref{T0}. We shall find a Busemann--Feller basis $\BBB^*$ in the unit interval $[0,1]$, whose elements are finite unions of open intervals, such that ${\rm diff}(\BBB) = {\rm diff}(\BBB^*)$. If $\BBB = \RRR_0$, then we can take the standard dyadic basis in $[0,1]$, so assume that $\BBB$ satisfies conditions~\ref{A1}--\ref{A4}. Fix $j=1$ and split $[0,1]$, up to a set of measure zero, into open intervals $E_{1,n}^*$ such that the length of each $E_{1,n}^*$ is precisely $|E_{1,n}|$. Then split each $E_{1,n}^*$ into open subintervals $U_1^*$, with lengths corresponding to the sizes of atoms $U_1 \in \Sigma_1$ determined by $\SSS_{1,n}$. Next, define $E_{2,n}^*$ and $U_2^*$ analogously, dealing with each subinterval $U_1^*$ separately, and continue in this manner for every $j \in \NN$. Let $S^* \subseteq [0,1]$ be an element of $\BBB^*$ if and only if $S^*$ is a finite union of open intervals $U_j^*$ for some $j \in \NN$, and the corresponding atoms $U_j \in \Sigma_j$ determine a rectangle $S \in \BBB$.
		It is routine to show that ${\rm diff}(\BBB) = {\rm diff}(\BBB^*)$. Namely, we either deduce the weak-type $(p,p)$ bounds for $\MMM_{\BBB^*}$ from the corresponding bounds for $\MMM_\BBB$, or construct a specific function $f^*$ corresponding to the function $f$ defined in \eqref{def-f}. 
	\end{proof}
	
	\begin{proof}[Proof of Theorem~\ref{T1}]
		If $\XXX$ is complete, then ${\rm diff}(\BBB)$ must take one of the forms \ref{C1}--\ref{C6} by Proposition~\ref{P1}. If instead $X = \bigcup_{n \in \NN} G_n$ for open sets $G_n \subseteq X$ satisfying $\mu(G_n) < \infty$, then ${\rm diff}(\BBB)$ must take one of the forms \ref{C1}--\ref{C4}, again by Proposition~\ref{P1}.
		
		Conversely, fix $p_0 \in [1,\infty)$ and let $P \subseteq [1,\infty]$ take one of the forms \ref{C1}--\ref{C4}. We will find a pair $(\XXX,\BBB)$ such that ${\rm diff}(\BBB) = P$ and $\mu(X) < \infty$. In addition, we choose $(\XXX,\BBB)$ so that $\BBB$ is uncentered and $\rho(x,y) \leq 1$ for every $x,y \in X$. 
		
		Before that, however, note that if we can do that, then we can also find a pair $(\XXX,\BBB)$ such that ${\rm diff}(\BBB) = P$ for any $P \subseteq [1,\infty]$ that takes one of the forms \ref{C5}--\ref{C6}. Indeed, assume that $P = [p_0, \infty)$ (resp. $P = (p_0, \infty)$). Let $(\XXX', \BBB')$ be the pair mentioned above for which ${\rm diff}(\BBB') = [p_0, \infty]$ (resp. ${\rm diff}(\BBB') = (p_0, \infty]$). Also, let $(\XXX'', \BBB'')$ be the pair from Example~\ref{E1}. Writing $\XXX' = (X', \rho', \mu')$ and $\XXX'' = (X'', \rho'', \mu'')$, we define $(\XXX, \BBB)$ as follows. Set $X \coloneqq X' \cup X''$, assuming that $X' \cap X'' = \emptyset$. For every $x,y \in X$, let 
		\[
		\rho(x,y) \coloneqq
		\begin{cases}
		\rho'(x,y) & \text{if } x,y \in X', \\
		\rho''(x,y) & \text{if } x,y \in X'', \\
		1 & \text{otherwise.}
		\end{cases}
		\]
		Next, for every $E \subseteq X$ such that $E \cap X'$ is $\mu'$-measurable, set 
		\[
		\mu(E) \coloneqq \mu'(E \cap X') + \mu''(E \cap X'').
		\]
		Then, for $\BBB \coloneqq \BBB' \cup \BBB''$, considered as an uncentered basis, we have 
		\[
		{\rm diff}(\BBB) = {\rm diff}(\BBB') \cap {\rm diff}(\BBB'')
		= {\rm diff}(\BBB') \cap [1,\infty)
		\]
		and so ${\rm diff}(\BBB) = [p_0,\infty)$ (resp. ${\rm diff}(\BBB) = (p_0,\infty)$), as desired. 
		
		It remains to find suitable pairs illustrating the cases~\ref{C1}--\ref{C4}. In what follows, the underlying space $\XXX$ is always the infinite-dimensional torus $\tom$ equipped with the standard metric and measure, or rather its complete version. In particular, we have $\mu(X) < \infty$ and $\rho(x,y) \leq 1$ for every $x,y \in X$. Moreover, each basis $\BBB$ is uncentered and countable, with the sole exception of the basis $\RRR$, which is uncentered and uncountable. 
		
		First, consider the case~\ref{C1}. Then $P = \emptyset$ and so we can take $\BBB = \RRR$. Indeed, we have ${\rm diff}(\RRR) = \emptyset$ by \cite[Theorem~1.1]{Ko21}. 
		Next, consider the cases~\ref{C3}--\ref{C4}. Now, we can take a basis $\BBB$ constructed as in the proof of Theorem~\ref{T0}.   
		Finally, consider the case~\ref{C2}. We split $\tom$ into a family $\{U_k : k \in \NN\}$ of disjoint rectangles. For every $k$, let $\BBB_k$ be defined as in the case~\ref{C3} with $p_0 = k+1$ and $U_k$ instead of $\tom$. Set $\BBB \coloneqq \bigcup_{k \in \NN} \BBB_k$. Then ${\rm diff}(\BBB) = \bigcap_{k \in \NN} {\rm diff}(\BBB_k) = \{\infty\}$, as desired.   
	\end{proof}
	
	We conclude the article with the following remark, which relates the range of differentiation to the range of weak-type boundedness for a given uncentered basis $\BBB$.
	
	\begin{remark} \label{rem3}
		Let $\XXX$ be a metric measure space. Let $\BBB$ be an uncentered differentiation basis in $\XXX$ such that $\MMM_\BBB f $ is measurable for every $p \in [1,\infty]$ and $f \in L^p(\XXX)$. Define 
		\[
		{\rm max}(\BBB) \coloneqq
		\big\{ p \in [1,\infty] : \MMM_\BBB  \text{ satisfies the weak-type } (p,p) \text{ inequality} \big\}
		\]
		with the convention that being of weak-type $(\infty,\infty)$ is equivalent to being bounded on $L^\infty(\XXX)$.
		Then ${\rm max}(\BBB)$ takes one of the forms \ref{C2}--\ref{C4} for some $p_0 \in [1,\infty)$. Moreover, if $\BBB$ is countable and, for every $p \in [1,\infty)$, the space $L^p(\XXX)$ contains a dense subset consisting of continuous functions, then ${\rm max}(\BBB) \setminus \{ \infty \} \subseteq {\rm diff}(\BBB)$ by Lemma~\ref{L0}. 
		
		Conversely, let $P_1 \subseteq [1,\infty]$ take one of the forms \ref{C1}--\ref{C6} for some $p_1 \in [1,\infty)$ in place of $p_0$, and let $P_2 \subseteq [1,\infty]$ take one of the forms \ref{C2}--\ref{C4} for some $p_2 \in [1,\infty)$ in place of $p_0$. If $P_2 \setminus \{ \infty \} \subseteq P_1$, then one can find a pair $(\XXX, \BBB)$ such that ${\rm diff}(\BBB) = P_1$ and ${\rm max}(\BBB) = P_2$. 
		Indeed, let $(\XXX', \BBB')$ be the pair constructed in the proof of Theorem~\ref{T1} for which ${\rm diff}(\BBB') = P_1$ and ${\rm max}(\BBB') = P_1 \cup \{\infty\}$, and let $(\XXX'', \BBB'')$ be the pair appearing in \cite[Corollary~1.3]{KRR23} for which ${\rm diff}(\BBB'') = [1,\infty]$ (this property is not stated explicitly in \cite[Corollary~1.3]{KRR23}, but it follows easily from the construction of $\BBB''$) and ${\rm max}(\BBB'') = P_2$. Define $(\XXX, \BBB)$ as in the proof of Theorem~\ref{T1}. Then 
		\begin{align*}
			{\rm diff}(\BBB) = \, {\rm diff}(\BBB') \, & \cap \, {\rm diff}(\BBB'') \, = P_1,\\
			{\rm max}(\BBB) = {\rm max}(\BBB') & \cap {\rm max}(\BBB'') = P_2.
		\end{align*}
		Therefore, the pair $(\XXX, \BBB)$ satisfies the desired properties. 
	\end{remark}


\begin{thebibliography}{KMPRR22}
		
		\bibitem[Ba09]{Ba09} 
		M.D.~Bateman, \emph{Kakeya sets and directional maximal operators in the plane}, Duke Math. J.~{\bf 147} (2009), 55--77.
		
		\bibitem[Co76]{Co76} 
		A.~C\'ordoba, \emph{On the Vitali covering properties of a differentiation basis}, Studia Math.~{\bf 57} (1976), 91--95.
		
		\bibitem[CF75]{CF75} 
		A.~C\'ordoba, R.~Fefferman, \emph{A geometric proof of the strong maximal theorem}, Ann. of Math.~{\bf 102} (1975), 95--100.
		
		\bibitem[dG75]{dG75}
		M.~de Guzm{\'a}n, \emph{Differentiation of integrals in $\mathbb{R}^n$. With appendices by Antonio C{\'o}rdoba, and Robert Fefferman, and two by Roberto Moriy{\'o}n}. Lecture Notes in Mathematics, Vol.~{\bf 481}, Springer--Verlag, Berlin--New York, 1975.
		
		\bibitem[dP36]{dP36}
		R.~de Possel, \emph{Sur la derivation abstraite des fonctions d'ensemble}, J. Math. Pures Appl.~{\bf 15} (1936), 391--409.
		
		\bibitem[Fa72]{Fa72}
		N.~Fava, \emph{Weak type inequalities for product operators}, Studia Math.~{\bf 42} (1972), 271--288.
		
		\bibitem[Fe19]{Fe19} E.~Fern{\'a}ndez, 
		\emph{An{\'a}lisis de Fourier en el toro infinito-dimensional},
		Ph.D.~thesis, Universidad de La Rioja, 2019.
		
		\bibitem[FR20]{FR20} E.~Fern{\'a}ndez, L.~Roncal,  
		\emph{A decomposition of Calder{\'o}n--Zygmund type and some observations on differentiation of integrals on the infinite-dimensional torus}, Potential Anal.~{\bf 53} (2020), 1449--1465.
		
		\bibitem[Fo99]{Fo99} G.B.~Folland, 
		\emph{Real Analysis: Modern Techniques and Their Applications}, Wiley, New York, 1999.
		
		\bibitem[Ha24]{Ha24}
		P.A.~Hagelstein, \emph{Differentiation of integrals: a contemporary overview}, Pure Appl. Funct. Anal.~{\bf 9} (2024), 1115--1131.
		
		\bibitem[Ha24a]{Ha24a}
		P.A.~Hagelstein, \emph{Current perspectives on the Halo Conjecture}, Adv. Nonlinear Stud. (2024), available at: https://doi.org/10.1515/ans-2023-0157.
		
		\bibitem[HRS24]{HRS24} 
		P.A.~Hagelstein, B.~Radillo-Murguia, A.M.~Stokolos, \emph{Probabilistic construction of Kakeya-type sets in $\RR^2$ associated to separated sets of directions}, preprint, available at: https://arxiv.org/abs/2405.17674.
		
		\bibitem[HRS24a]{HRS24a} 
		P.A.~Hagelstein, B.~Radillo-Murguia, A.M.~Stokolos, \emph{Remarks on the construction of $K_\sigma$ sets associated to trees not satisfying a separation condition}, preprint, available at: https://arxiv.org/abs/2409.12280.
		
		\bibitem[HS09]{HS09}
		P.A.~Hagelstein, A.M.~Stokolos, \emph{Tauberian conditions for geometric maximal operators}, Trans. Amer. Math. Soc.~{\bf 361} (2009), 3031--3040.
		
		\bibitem[HS24]{HS24} 
		P.A.~Hagelstein, A.M.~Stokolos, \emph{$L^p(\mathbb R^2)$ bounds for geometric maximal operators associated with homothecy invariant convex bases}, Indiana Univ. Math. J.~{\bf 73} (2024), 1443--1451.
		
		\bibitem[Ha52]{Ha52}
		C.A.~Hayes, \emph{Differentiation with respect to $\phi$-pseudo-strong blankets and related problems}, Proc. Amer. Math. Soc.~{\bf 3} (1952), 283--296.
		
		\bibitem[Ha52a]{Ha52a}
		C.A.~Hayes, \emph{Differentiation of some classes of set functions}, Math. Proc. Cambridge Philos. Soc.~{\bf 48} (1952), 374--382.
		
		\bibitem[Ha58]{Ha58}
		C.A.~Hayes, \emph{A sufficient condition for the differentiation of certain classes of set functions}, Math. Proc. Cambridge Philos. Soc.~{\bf 54} (1958), 346--353.
		
		\bibitem[Ha76]{Ha76}
		C.A.~Hayes, \emph{Derivation of the integrals of $L\sp{(q)}$-functions}, Pacific J. Math.~{\bf 64} (1976), 173--180.
		
		\bibitem[Ha76a]{Ha76a}
		C.A.~Hayes, \emph{Necessary and sufficient conditions for the derivation of integrals of $L\sb{\psi }$-functions}, Trans. Amer. Math. Soc.~{\bf 223} (1976), 385--395.
		
		\bibitem[HP55]{HP55}
		C.A.~Hayes, C.Y.~Pauc, \emph{Full individual and class differentiation theorems in their relations to halo and Vitali properties}, Canadian J. Math.~{\bf 7} (1955), 221--274.
		
		\bibitem[JMZ35]{JMZ35}
		B.~Jessen, J.~Marcinkiewicz, A.~Zygmund, \emph{Note on the differentiability of multiple integrals}, Fund. Math.~{\bf 25} (1935), 217--234.
		
		\bibitem[Ko21]{Ko21}
		D.~Kosz, \emph{On differentiation of integrals in the infinite-dimensional torus}, Studia Math.~{\bf 258} (2021), 103--119.
		
		\bibitem[KMPRR23]{KMPRR23}
		D.~Kosz, J.~Mart{\'i}nez-Perales, V.~Paternostro, E.~Rela, L.~Roncal, \emph{Maximal operators on the infinite-dimensional torus}, Math. Ann.~{\bf 385} (2023), 1--39.
		
		\bibitem[KRR23]{KRR23}
		D.~Kosz, G.~Rey, L.~Roncal, \emph{Weak-type maximal function estimates on the infinite-dimensional torus}, Math. Z.~{\bf 304} (2023), article no.~43.
		
		\bibitem[RdF78]{RdF78} J.L.~Rubio~de~Francia, \emph{Nets of subgroups in locally compact groups}, Comment. Math. Prace Mat.~{\bf 20} (1977/78), 453--466.
		
	\end{thebibliography}
\end{document}